\documentclass[11pt]{article}

\usepackage[T1]{fontenc}
\usepackage{lmodern}
\usepackage{amsmath,amssymb,mathtools}
\usepackage{amsthm}
\usepackage{enumitem}
\usepackage{microtype}
\usepackage[margin=1.12in]{geometry}
\usepackage{xcolor}
\usepackage[colorlinks=true,linkcolor=blue!55!black,
  citecolor=blue!55!black,urlcolor=blue!60!black]{hyperref}

\numberwithin{equation}{section}
\mathtoolsset{showonlyrefs=true} 
\newtheorem{theorem}{Theorem}[section]
\newtheorem{proposition}[theorem]{Proposition}
\newtheorem{lemma}[theorem]{Lemma}
\newtheorem{corollary}[theorem]{Corollary}
\theoremstyle{definition}

\theoremstyle{remark}
\newtheorem{remark}[theorem]{Remark}

\newcommand{\Om}{\Omega}
\newcommand{\D}{\mathbb D}
\newcommand{\dv}{\,dv_g}

\newcommand{\dd}{\,d}

\newcommand{\Span}{\operatorname{span}}

\title{{ Sharp Harmonic-Mean Bounds for Neumann Eigenvalues\\ on Riemannian Surfaces\\and Counterexamples to Geodesic Ball Optimality\\
on Higher-Dimensional Spheres}}
\author{Hongtao Hu\thanks{\href{mailto:huht23@mails.tsinghua.edu.cn}{\texttt{huht23@mails.tsinghua.edu.cn}}} \and Meiqi Liu\thanks{\href{mailto:liumq@hdu.edu.cn}{\texttt{liumq@hdu.edu.cn}}} \and Qiaoran Wu\thanks{\href{mailto:wuqr24@mails.tsinghua.edu.cn}{\texttt{wuqr24@mails.tsinghua.edu.cn}}} \and Wenming Zou\thanks{\href{mailto:zou-wm@math.tsinghua.edu.cn}{\texttt{zou-wm@math.tsinghua.edu.cn}}}}
\date{}

\begin{document}

\maketitle

\begin{abstract}
{
\noindent Let $\Omega\subset\mathbb C$ be a bounded simply connected Lipschitz domain with conformal metric $g=\omega|dz|^2$, where $\omega\in C^2(\Omega)\cap L^\infty(\Omega)$ is positive in the interior and may tend to zero at the boundary. If the Gaussian curvature satisfies $K_g\leq K$, $K>0$, and $KM<4\pi$, where $M=\int_\Omega\omega\,dA$, then the Neumann eigenvalues, numbered by $0=\mu_1(\Omega;\omega)<\mu_2(\Omega;\omega)\leq\mu_3(\Omega;\omega)\leq\cdots$, satisfy
\[
 \frac1{\mu_2(\Omega;\omega)}+\frac1{\mu_3(\Omega;\omega)}
 \geq\frac2{\mu_2(D_K(M))}.
\]
Here $D_K(M)$ is the geodesic disk of area $M$ in the surface of constant curvature $K$. We prove that equality holds exactly when the interiors are isometric. This completes the Conjecture~1.2 of Langford--Laugesen; in particular, the model disk maximizes the first positive Neumann eigenvalue in this class.}

{In contrast, geodesic balls need not maximize the first positive Neumann eigenvalue at fixed volume on higher-dimensional spheres, even among smooth simply connected domains.} Indeed, for every fixed dimension $n\ge3$, we construct a smooth simply connected domain $\Om$ in the unit sphere $\mathbb S^n$ whose first positive Neumann eigenvalue is strictly larger than that of a { geodesic ball} $B$ of the same volume:
\[
 \mu_2(\Om)>\mu_2(B),\qquad |\Om|=|B|.
\]
Together with the known two-dimensional result, this completely disproves Conjecture~1.3 of Langford--Laugesen.
\end{abstract}

\medskip
\noindent\textbf{Keywords:} Riemannian surface; Neumann eigenvalue; {Gaussian curvature}; harmonic mean; { sphere}.

\smallskip
\noindent\textbf{2020 Mathematics Subject Classification:} 58J50; 35P15, 49Q10.

\section{Introduction}
\enlargethispage{2pt}

Let $(\Om,g)$ be a compact connected Riemannian surface with boundary. {The Laplace--Beltrami operator is defined by}
\[
 {\Delta_g u:=\operatorname{div}_g(\nabla_g u),}
\]
where $\nabla_g$ and $\operatorname{div}_g$ denote the gradient and divergence with respect to $g$, {respectively}. We write $\Delta=\partial_x^2+\partial_y^2$ for the classical Euclidean Laplacian, so that $\Delta_g=\Delta$ for the Euclidean metric.

In the smooth setting, the Neumann eigenvalues in this paper are the eigenvalues of $-\Delta_g$ in the boundary-value problem
\[
 \begin{cases}
  -\Delta_g u=\mu u,&\text{in $\Om$},\\
  \partial_{\nu_g}u=0,&\text{on $\partial\Om$},
 \end{cases}
 \qquad u\not\equiv0.
\]
Here $\nu_g$ is the outward unit normal and $\partial_{\nu_g}u=\langle\nabla_g u,\nu_g\rangle_g$. In the Lipschitz setting the eigenvalue problem is understood in the weak sense. We arrange the eigenvalues in nondecreasing order, counted with multiplicity,
\[
  0=\mu_1(\Om,g)<\mu_2(\Om,g)\leq\mu_3(\Om,g)\leq\cdots.
\]
For $K\in\mathbb R$ and $M>0$ (with $M<4\pi/K$ when $K>0$), let $D_K(M)$ denote a geodesic disk of area $M$ in the complete simply connected surface of constant curvature $K$. Its first positive Neumann eigenvalue has multiplicity two:
\begin{equation}\label{eq:model-double}
  \mu_2(D_K(M))=\mu_3(D_K(M)).
\end{equation}

Langford--Laugesen \cite[Theorem~1.1]{LangfordLaugesen2023} established a sharp harmonic-mean comparison on simply connected Lipschitz surfaces. If $K_g\leq K$ and $M=|\Om|_g$, they proved
\[
  \frac1{\mu_2(\Om,g)}+\frac1{\mu_3(\Om,g)}
  \geq\frac2{\mu_2(D_K(M))}
  \qquad\text{when}\qquad KM\leq\frac{16}{17}\,4\pi.
\]
Equivalently, the harmonic mean of the first two positive eigenvalues {does not exceed} the first positive eigenvalue of the equal-area model disk. {They also proved equality rigidity: equality holds if and only if $(\Omega,g)$ is isometric to $D_K(M)$.} For $K\leq0$ the area condition is automatic. Their new contribution concerned $K>0$, where they extended Bandle's range $KM\leq2\pi$ \cite{Bandle1972} to $KM\leq(16/17)4\pi$. Thus, among simply connected Lipschitz domains of prescribed area on the sphere, a spherical cap maximizes the first positive Neumann eigenvalue whenever the prescribed area is at most $94.1\%$ of the area of the sphere.

The threshold $16/17$ reflects the reach of that method and need not be the optimal range for the harmonic-mean conclusion. Langford--Laugesen therefore posed the following open problem.

\smallskip
\noindent\textbf{Conjecture 1.2 (Area up to $4\pi$).} If $(\Om,g)$ is a simply connected Lipschitz surface satisfying $K_g\leq K$ with $K>0$, and $M=|\Om|_g$ satisfies $KM<4\pi$, then
\begin{equation}\label{eq:LL-range}
  \frac1{\mu_2(\Om,g)}+\frac1{\mu_3(\Om,g)}
  \geq \frac2{\mu_2(D_K(M))}.
\end{equation}
In particular,
    \begin{equation}\label{eq:second-eigenvalue}
    	\mu_2(\Omega,g)\leq\mu_2(D_K(M)).
    \end{equation}

The condition $KM<4\pi$ is natural because $4\pi/K$ is the area of the complete sphere of constant curvature $K$, and the model disk {attains} equality in \eqref{eq:LL-range}.

Langford--Laugesen also proposed the following higher-dimensional conjecture.
    
\smallskip
\noindent\textbf{Conjecture~1.3 (Higher dimensions).} If $\Om$ is a Lipschitz subdomain of the unit sphere $\mathbb S^n$ ($n\geq2$), then
\begin{equation}\label{eq:LL-higher-dimensional}
  \mu_2(\Om)\leq\mu_2(B),\qquad |B|=|\Om|,
\end{equation}
where $B$ is a geodesic ball.

Here $|\cdot|$ denotes $n$-dimensional {measure on $\mathbb{S}^n$.} Conjecture~1.3 assumes neither simple connectedness nor a volume restriction. Langford--Laugesen also raised the possibility of a weaker version in which $|\Om|$ is at most half the volume of the sphere.

For Conjecture~1.2, Provenzano--Savo \cite[Theorem~1.3]{ProvenzanoSavo2026} established on smooth simply connected surfaces the single-eigenvalue bound
    \begin{equation}\label{eq:single-value-comparison}
    	\mu_2(\Om,g)\leq\mu_2(D_K(M)),\quad KM<4\pi,
    \end{equation}
which is \eqref{eq:second-eigenvalue}. {For spherical domains, they also proved equality rigidity \cite[Theorem~1.1]{ProvenzanoSavo2026}. The harmonic-mean bound \eqref{eq:LL-range} is stronger than \eqref{eq:single-value-comparison}, so their result gives a partial answer to Conjecture~1.2.}

{Chen--Yang \cite[Theorem~1.1 and Corollary~3.2]{ChenYang2026} proved the sharp harmonic-mean inequality for smooth simply connected spherical domains, with equality only for geodesic disks, and extended the inequality to smooth simply connected surfaces with a Gaussian curvature upper bound. Their Corollary~3.2 also includes $K\leq0$ and the endpoint $KM=4\pi$ when $K>0$. On the common smooth range $K>0$ and $KM<4\pi$, their bound coincides with \eqref{eq:LL-range}.}

{We take the smooth harmonic-mean comparison from \cite[Corollary~3.2]{ChenYang2026} as a preliminary result. We retain only the Green-radial quantities and the intermediate estimate at a balanced pole that are needed for the Lipschitz equality argument. These come from the framework of Provenzano--Savo \cite{ProvenzanoSavo2026} and its two-dimensional variational application in \cite[Section~3]{ChenYang2026}.}

{Compared with the smooth results of Chen--Yang, Theorem~\ref{thm:lipschitz-main} below treats Lipschitz domains with bounded weights that are positive in the interior but may tend to zero at the boundary, and includes an equality characterization by interior isometry.}

{For Conjecture 1.3, Bucur et al. \cite[Theorem~1]{BucurLaugesenMartinetNahon2025} constructed a counterexample on $\mathbb{S}^2$: there exists $\delta>0$ such that for each $\theta\in(\Theta-\delta,\pi)$, there exists an open set $\Omega_{\theta}\subset\mathbb{S}^2$ with $\lvert\Omega_{\theta}\rvert=\lvert B_{\theta}\rvert$ and
    \[\mu_{2}(\Omega_{\theta})>\mu_{2}(B_{\theta}),\]
where $\Theta\approx 0.70\pi$ is the unique constant such that $\mu_{2}(B_{\Theta})\sin^2\Theta=1$, and $B_{\theta}$ is the spherical cap with aperture angle $\theta$. More precisely, $\Omega_{\theta}$ is obtained by removing four small holes from $B_{\theta}$ at the same latitude, followed by a boundary perturbation to preserve the area; see \cite[Section~4]{BucurLaugesenMartinetNahon2025}. The first positive eigenvalue is denoted by $\mu_1$ in \cite{BucurLaugesenMartinetNahon2025} and by $\mu_2$ here. Later, Provenzano--Savo \cite[Theorem~1.2]{ProvenzanoSavo2026} found another annular counterexample: for the symmetric spherical annulus $\Omega_\varepsilon=\{(r,\theta):\varepsilon<r<\pi-\varepsilon\}$ and the equal-area cap $B_\varepsilon$, one has $\mu_2(\Omega_\varepsilon)>\mu_2(B_\varepsilon)$ when $\varepsilon>0$ is sufficiently small. Thus Conjecture 1.3 fails for dimension $2$.

In this paper, we adopt the hole-cutting idea from \cite{BucurLaugesenMartinetNahon2025} to give counterexamples for arbitrary dimension \(n\ge 3\). Our construction also requires the volume to be greater than $\lvert\mathbb{S}^n\rvert/2$. Thus, whether that weaker version holds remains open.}

{We now state our results, beginning with the Lipschitz extension. Smooth-boundary comparison does not by itself cover this setting: conformal factors pulled back to the disk can be unbounded near a corner. We therefore use smooth inner approximation, together with an approximation of the metric that preserves the curvature upper bound. Spectral convergence yields the inequality, but equality requires a separate argument that prevents the balanced poles from escaping to the boundary and detects strictness in the limiting radial comparison.}

Following the notation of \cite{LangfordLaugesen2023}, let $\Omega\subset\mathbb C$ be a bounded simply connected Lipschitz domain equipped with the metric
\[
  g=\omega(z)|dz|^2,
\]
where $\omega$ is a weight. If $\omega\in C^2(\Omega)$, then its curvature is
\[
  K_g=-\frac{\Delta\log\omega}{2\omega},
\]
where $\Delta$ continues to denote the classical Euclidean Laplacian. In \cite{LangfordLaugesen2023}, one assumes $\omega\in C^2(\Omega)\cap C(\overline\Omega)$ and $\omega>0$ on $\overline\Omega$. We relax that hypothesis to $\omega\in C^2(\Om)\cap L^\infty(\Om)$ with positivity required only in $\Om$.

Below, $\mu_k(\Om;\omega)$ denotes the $k$th Neumann eigenvalue for the conformal metric $g=\omega|dz|^2$.

\begin{theorem}\label{thm:lipschitz-main}
Let $\Om\subset\mathbb C$ be a bounded simply connected Lipschitz domain, and let $\omega\in C^2(\Om)\cap L^\infty(\Om)$ satisfy $\omega>0$ in $\Om$. If
\[
  -\frac{\Delta\log\omega}{2\omega}\leq K,
  \qquad K>0,
  \qquad K|\Om|_\omega<4\pi,
\]
where $|\Om|_\omega=\int_\Om\omega\,dA$, then
\begin{equation}\label{eq:main-lipschitz}
  \frac1{\mu_2(\Om;\omega)}+\frac1{\mu_3(\Om;\omega)}
  \geq \frac2{\mu_2(D_K(|\Om|_\omega))}.
\end{equation}
Equality holds if and only if there exists a conformal diffeomorphism $\Phi:\D\to\Om$ such that
\begin{equation}\label{eq:lip-rigidity-model}
	\omega(\Phi(z))|\Phi'(z)|^2
	=\frac{M(1-b)}{\pi(1-b+b|z|^2)^2},
	\qquad M=|\Om|_\omega,\quad b=\frac{KM}{4\pi}.
\end{equation}
Here
    \[\frac{M(1-b)}{\pi(1-b+b|z|^2)^2}\]
is the metric coefficient obtained by mapping the spherical cap to the unit disk $\mathbb D$; see Subsection~\ref{subsec:lip-rigidity}. Equivalently, the interior of $(\Om,\omega|dz|^2)$ is isometric to the interior of $D_K(M)$.
\end{theorem}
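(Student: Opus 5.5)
We transfer the problem to the unit disk and then approximate by smooth problems to which the smooth comparison \cite[Corollary~3.2]{ChenYang2026} applies. Let $f:\D\to\Om$ be a Riemann map and set $\sigma=\omega(f)|f'|^2$ on $\D$. Then $(\D,\sigma|dz|^2)$ is isometric to the interior of $(\Om,\omega|dz|^2)$. The curvature bound $-\Delta\log\sigma/(2\sigma)\le K$ holds in $\D$, since $\log|f'|^2$ is harmonic, and $\int_\D\sigma\,dA=M$. Because $\Om$ is a bounded Lipschitz domain and $\omega\in L^\infty$, the Neumann problem on $\Om$ is the weak problem with quadratic form $\int|\nabla u|^2dA$ on $H^1(\Om)$ and mass $\int u^2\omega\,dA$. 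The Dirichlet integral is conformally invariant, so $\mu_k(\Om;\omega)$ equals the $k$th eigenvalue of $\int_\D|\nabla v|^2/\int_\D v^2\sigma$ on the space $\{u\circ f:u\in H^1(\Om)\}$. I will use the Lipschitz property to identify this space with $\{v\in H^1_{\mathrm{loc}}(\D):\int|\nabla v|^2<\infty,\ \int v^2\sigma<\infty\}$, or at least to show it is dense there in the relevant norm. This identification also lets me use the compactness of the embedding coming from $\Om$.

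\textbf{Inequality.} Take the exhaustion $\Om_r=f(D_r)$, $r\uparrow1$. These are smooth simply connected domains carrying the smooth metric $\omega|dz|^2$ up to the boundary, so the curvature bound is preserved and no metric modification is needed. Their areas satisfy $M_r\uparrow M$ with $KM_r<4\pi$. By \cite[Corollary~3.2]{ChenYang2026},
\[
\frac1{\mu_2(\Om_r)}+\frac1{\mu_3(\Om_r)}\ge\frac2{\mu_2(D_K(M_r))}.
\]
The right side is continuous in $M$. It then suffices to show $\limsup_{r\to1}\mu_k(\Om_r)\le\mu_k(\Om;\omega)$ for $k=2,3$. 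For this, restrict the eigenfunctions of $\Om$ to $\Om_r$ and use them as test functions in the min-max. The Rayleigh quotients converge by dominated convergence, and orthogonality to constants can be restored by subtracting means. This one-sided convergence is all the inequality needs, so \eqref{eq:main-lipschitz} follows.

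\textbf{Equality.} The ``if'' direction is immediate: \eqref{eq:lip-rigidity-model} means $(\D,\sigma)$ is the model cap, whose eigenvalues are known, and the domain identification above shows that $\Om$ has the same spectrum. For ``only if'', I would rerun the Provenzano--Savo/Chen--Yang argument directly on $(\D,\sigma)$ rather than passing to a limit. This has four steps.
\begin{enumerate}
\item Fix a pole $p\in\D$. Build test functions $v_j=h(G_p)\,x_j(p)$, $j=1,2$, where $G_p$ is the Green-radial quantity (via the conformal automorphism moving $p$ to $0$), $h$ is the model radial profile, and $x_1,x_2$ are the angular coordinates.
\item Use a topological degree argument (Brouwer/Hersch balancing) to choose $p$ so that $\int v_j\sigma=0$. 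The key point is that the balancing map tends to the boundary direction as $p\to\partial\D$. This holds because $h$ is monotone and the mass $\sigma\,dA$ is finite, so a balanced pole $p$ exists in the interior $\D$. This avoids the danger of poles escaping to the boundary, since we never take a limit.
\item The test functions lie in the admissible space, since their Dirichlet energy equals that of the model. The intermediate estimate at the balanced pole, taken from \cite[Section~3]{ChenYang2026}, then gives $\frac1{\mu_2}+\frac1{\mu_3}\ge\sum_j\int v_j^2\sigma\big/\int|\nabla v_j|^2$.
\item The rearrangement/curvature comparison bounds this right-hand side below by $2/\mu_2(D_K(M))$, with strict inequality unless the distribution function of $\sigma$ about $p$ coincides with the model. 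This uses the isoperimetric inequality under $K_g\le K$ for the superlevel sets of $G_p$.
\end{enumerate}
If equality holds, every inequality in the chain is sharp. In particular, Bol's isoperimetric equality on each level set forces $\sigma\circ\phi_p$ to be radial and to satisfy the ODE of the constant-curvature metric. That gives \eqref{eq:lip-rigidity-model} with $\Phi=f\circ\phi_p^{-1}$.

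The main obstacle is the equality analysis under weak regularity. Since $\sigma$ may blow up or vanish at $\partial\D$, the balancing and the strictness have to be justified with $\sigma$ only in $L^1$. The plan there is to apply the sharp Bol--Fiala inequality level by level on the interior level sets, which are compactly contained in $\D$, so that boundary behavior never enters. Then $\sigma$ is determined on every interior level set, and hence everywhere.
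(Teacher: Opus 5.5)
Your proposal has a genuine gap in each half.

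\textbf{Inequality: the semicontinuity goes the wrong way.} Restricting eigenfunctions of $\Om$ to $\Om_r$ gives $\limsup_{r\to1}\mu_k(\Om_r)\le\mu_k(\Om;\omega)$, that is, $\mu_k(\Om_r)^{-1}\ge\mu_k(\Om;\omega)^{-1}-o(1)$. The smooth result is also a \emph{lower} bound on $\mu_2(\Om_r)^{-1}+\mu_3(\Om_r)^{-1}$. Two lower bounds on the same quantity say nothing about $\Om$; both are consistent with, say, $\mu_2(\Om_r)\to0$. To pass to the limit you need the opposite direction, $\liminf_{r\to1}\mu_k(\Om_r)\ge\mu_k(\Om;\omega)$. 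This is the hard direction, because it requires ruling out loss of eigenfunction mass near $\partial\Om$ uniformly in $r$. In the paper it is the second half of Lemma~\ref{lem:spectral-convergence}, which uses the Ball--Zarnescu approximations, Chenais' uniform cone property and extension operators, and the uniform anchored Poincar\'e inequality \eqref{eq:anchored-poincare}. For your exhaustion $f(\{|z|<r\})$ there is a cheaper repair. The reflection $z\mapsto r^2/\bar z$ extends functions from $\{|z|<r\}$ to $\D$ with at most twice the Dirichlet energy. Combine this with the compactness you mention: $\{w:\int_\D|\nabla w|^2\le C,\ \int_{|z|<1/2}w^2\sigma\le C\}$ is precompact in $L^2(\D,\sigma\,dA)$, since $\Om$ is Lipschitz and $\omega\in L^\infty$. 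As written, however, this step is missing and your stated reduction is false. A minor point: $\omega$ is only $C^2$, so the paper mollifies and rescales by $1+\delta/K$ to keep $K_g\le K$ before invoking the smooth result.

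\textbf{Equality: Step 4 fails as argued when the area is large.} Your test functions use the fixed model profile $h$, so they have exactly the model energy. Step 4 therefore amounts to $\int_\D h(|z|)^2\sigma\,dA\ge\int_\D h(|z|)^2p_{K,M}\,dA$ in the recentered coordinate. The curvature bound gives only $G_p\ge G_K$. This implies $\mathcal A(r)\le P_{K,M}(r)$, with $\mathcal A(1^-)=P_{K,M}(1)=M$. The difference of the two sides is then $\int_0^1(P_{K,M}-\mathcal A)(h^2)'\,dr$, which has a sign only if $h$ is nondecreasing. Your claim that $h$ is monotone is false: as $KM\to4\pi$ the profile tends to $\sin\rho$ in the geodesic radius and has an interior maximum near the equator. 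Your Step 2 survives, since balancing only needs $h(1)>0$ and $\sigma\in L^1$. The estimate of \cite[Section~3]{ChenYang2026} that you cite is not about this test function. It uses the Green-radial minimizer $u_p$ of the \emph{actual} metric, at a pole balanced for $u_p$. It then compares $\kappa_p=\kappa(G_p;M)\le\kappa(G_K;M)=\lambda_K(M)$ through the coefficient monotonicity of Lemma~\ref{lem:lip-rigidity-coefficient}, which needs no monotone profile. Once you switch to $u_p$, balancing becomes a degree argument for a $p$-dependent minimizer. Its continuity and boundary behaviour on the rough $(\D,\sigma)$ are exactly what you leave open. The paper avoids proving them. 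It takes balanced poles $p_j$ on the smooth approximations. It excludes their escape with the test function $|W_j|$ (energy $2\pi$, mass $\to M$) against the gap $\lambda_K(M)>2\pi/M$ of Lemma~\ref{lem:lip-rigidity-model-gap}. It then extracts $G_p=G_K$ from $\kappa(G_j;M_j)\to\lambda_K(M)$ and the strictness in Lemma~\ref{lem:lip-rigidity-coefficient}. Only after that does it use the equality cases of the isoperimetric and Cauchy--Schwarz inequalities, as you do.
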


\begin{corollary}\label{cor:single-eigenvalue}
{Under the assumptions of Theorem~\ref{thm:lipschitz-main}, write $\mu_j(\Om)=\mu_j(\Om;\omega)$ and $M=|\Om|_\omega$. Then}
\[
  \mu_2(\Om)\leq\mu_2(D_K(M)).
\]
{Equality holds if and only if the interior of $(\Om,\omega|dz|^2)$ is isometric to the interior of $D_K(M)$.}
\end{corollary}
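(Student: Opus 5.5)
The corollary follows from Theorem~\ref{thm:lipschitz-main} together with the ordering $\mu_2\le\mu_3$. Write $\mu^*=\mu_2(D_K(M))$, which is positive.

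For the inequality, note that $\mu_2(\Om)\le\mu_3(\Om)$ gives $1/\mu_3(\Om)\le 1/\mu_2(\Om)$. Substituting this into \eqref{eq:main-lipschitz} yields
\[
 \frac{2}{\mu^*}\le \frac1{\mu_2(\Om)}+\frac1{\mu_3(\Om)}\le \frac{2}{\mu_2(\Om)},
\]
and hence $\mu_2(\Om)\le\mu^*$. All eigenvalues involved are positive and finite. Positivity holds because $\Om$ is connected, so $\mu_1=0$ is simple. Finiteness, and discreteness of the spectrum, are implicit in the statement of Theorem~\ref{thm:lipschitz-main}, so I will take them from the setting there.

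For the equality case, suppose $\mu_2(\Om)=\mu^*$. The display above becomes
\[
 \frac2{\mu^*}\le\frac1{\mu^*}+\frac1{\mu_3(\Om)},
\]
so $\mu_3(\Om)\le\mu^*=\mu_2(\Om)\le\mu_3(\Om)$. This forces $\mu_3(\Om)=\mu^*$, so equality holds in \eqref{eq:main-lipschitz}. The rigidity part of Theorem~\ref{thm:lipschitz-main} then gives a conformal diffeomorphism $\Phi:\D\to\Om$ satisfying \eqref{eq:lip-rigidity-model}. That is, the interior of $(\Om,\omega|dz|^2)$ is isometric to the interior of $D_K(M)$.

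Conversely, suppose the interiors are isometric. Then Theorem~\ref{thm:lipschitz-main} says equality holds in \eqref{eq:main-lipschitz}, so $1/\mu_2(\Om)+1/\mu_3(\Om)=2/\mu^*$. To conclude $\mu_2(\Om)=\mu^*$, I argue as follows. The isometry $\Phi$ identifies the Dirichlet form $\int|\nabla u|^2\,dA$, which is conformally invariant, and the weighted $L^2$ norms. The Neumann energy space $\{u:\int_\Om|\nabla u|^2<\infty,\ \int_\Om u^2\omega<\infty\}$ is defined intrinsically, without reference to boundary regularity. So $\Phi$ induces a unitary equivalence of the Neumann problems, and the spectrum of $\Om$ coincides with that of $D_K(M)$. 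By \eqref{eq:model-double}, $\mu_2(\Om)=\mu_3(\Om)=\mu^*$.

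The only delicate point is in this converse. One must check that the Neumann spectrum is determined by the interior metric alone, i.e.\ that the form domain is the $H^1$-type space just described on the interior. This is presumably how the paper sets up the weak eigenvalue problem for bounded $\omega$. Alternatively, the converse already follows from the equality clause of Theorem~\ref{thm:lipschitz-main} combined with the harmonic-mean inequality: if the harmonic mean of $\mu_2(\Om)$ and $\mu_3(\Om)$ equals $\mu^*$, and the spectrum matches the model's, then the double eigenvalue \eqref{eq:model-double} gives $\mu_2(\Om)=\mu^*$.
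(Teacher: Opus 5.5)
Your derivation of the inequality and of the forward half of the equality case is correct, and it is exactly how the corollary follows from Theorem~\ref{thm:lipschitz-main}. Using $\mu_2\le\mu_3$ bounds the harmonic mean by $\mu_2(\Om)$. If $\mu_2(\Om)=\mu_2(D_K(M))$, then $\mu_3(\Om)$ is squeezed to the same value, so equality holds in \eqref{eq:main-lipschitz} and the rigidity clause applies.

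The gap is in the converse. Your route is a unitary equivalence of the two Neumann problems under $\Phi$. This is what the paper does in the converse part of the proof of Theorem~\ref{thm:lipschitz-main}, which concludes $\mu_2(\Om;\omega)=\mu_3(\Om;\omega)=\lambda_K(M)$. However, your justification of the form-domain identification does not hold up.

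The form domain in \eqref{eq:weighted-minmax} is $H^1(\Om)$, not an intrinsic energy space. The inclusion you need for the upper bound is that $u=f\circ\Phi^{-1}\in H^1(\Om)$ for every $f\in H^1(\D)$, and this does depend on boundary regularity. Since $\omega$ may tend to zero at $\partial\Om$, finite weighted mass does not give $u\in L^2(\Om,dA)$. On irregular bounded domains, finite Dirichlet energy plus local square integrability does not give global square integrability either (think of rooms and thin passages). The paper obtains this inclusion by applying the uniform anchored estimate \eqref{eq:weighted-H1-control} on the Lipschitz exhaustion $\Om_j$ and letting $j\to\infty$. Your ``alternatively'' remark is circular, since it presupposes the same spectral identification.

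You can bypass the hard inclusion entirely, because the first part already gives $\mu_2(\Om)\le\mu_2(D_K(M))$; only the lower bound is missing.
\begin{itemize}
\item If $u\in H^1(\Om)$, then $u\circ\Phi$ has the same Dirichlet energy by conformal invariance.
\item It also has the same weighted mass, $\int_\D(u\circ\Phi)^2p_{K,M}\,dA=\int_\Om u^2\omega\,dA$.
\item Since $p_{K,M}$ is bounded below by a positive constant on $\D$, this gives $u\circ\Phi\in H^1(\D)$.
\item Transplanting an optimal $2$-dimensional subspace for $\Om$ through \eqref{eq:weighted-minmax} gives $\mu_2(D_K(M))\le\mu_2(\Om)$. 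Here the cap's Neumann spectrum is that of $p_{K,M}|dz|^2$ on $\D$ with form domain $H^1(\D)$, because stereographic projection is smooth up to the boundary.
\end{itemize}
Combined with the upper bound, this gives $\mu_2(\Om)=\mu_2(D_K(M))$, and no boundary regularity of $\Om$ is used in this direction.
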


Our higher-dimensional counterexample to Conjecture~1.3 is as follows.

\begin{theorem}\label{thm:highdim-counterexample}
For every fixed integer $n\ge3$, there exists a smooth simply connected domain $\Om\subset\mathbb S^n$ such that
\[
 \frac{|\mathbb S^n|}{2}<|\Om|<|\mathbb S^n|.
\]
If $B\subset\mathbb S^n$ is a geodesic ball with the same volume as $\Om$, then
\begin{equation}\label{eq:highdim-main}
 \mu_2(\Om)>\mu_2(B),\qquad
 \sum_{j=2}^{n+1}\frac1{\mu_j(\Om)}<\frac n{\mu_2(B)}.
\end{equation}
The domain $\Om$ may be chosen as a geodesic ball with $2n$ mutually disjoint interior oblate ellipsoidal holes removed; its boundary has $2n+1$ connected components.
\end{theorem}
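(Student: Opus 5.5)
We prove Theorem~\ref{thm:highdim-counterexample} by a perturbative construction that follows the hole-cutting idea of \cite{BucurLaugesenMartinetNahon2025}, adapted to $\mathbb S^n$. Let $B_\theta$ be the geodesic ball of radius $\theta$ centered at the north pole. Its first positive Neumann eigenspace is $n$-dimensional, spanned by $f(r)\,x_i/|x'|$ for $i=1,\dots,n$, where $r$ is the geodesic distance to the pole and $f$ solves the radial ODE with $f'(\theta)=0$. The decisive quantity is the one that makes $\Theta$ special in dimension two: the sign of $\mu_2(B_\theta)\sin^2\theta-(n-1)$, or more precisely the sign of the energy density $|\nabla u|^2-\mu u^2$ of an eigenfunction at a chosen latitude. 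First I will show that for $\theta$ close to $\pi$ (volume above $|\mathbb S^n|/2$) there is a latitude $r_0\in(0,\theta)$ where, after averaging over the eigenspace, the integrand
\[
 \sum_{i}\bigl(|\nabla u_i|^2-\mu_2(B_\theta)u_i^2\bigr)
\]
is strictly negative. As $\theta\to\pi$, $\mu_2(B_\theta)\to n$ and $u_i\to x_i$. For the restrictions of $x_i$ to the sphere one has $\sum_i|\nabla x_i|^2=n-|x'|^2\cdot0\ldots$. Instead of chasing that identity, I will compute explicitly: $\sum_i|\nabla^{\mathbb S}x_i|^2=n-\sum x_i^2=n-\sin^2r$ and $\sum x_i^2=\sin^2 r$ for $i\le n$, plus a correction in the $x_{n+1}$ direction. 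This gives a density of sign $n-\sin^2r(1+n)$, which is negative near the equator. I expect the limit $\theta\to\pi$ (or the exact value for a fixed $\theta$ near $\pi$) to yield a negative region at some latitude $r_0$.

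Second, I remove $2n$ small holes centered at the points $\pm$ along $n$ orthogonal directions at latitude $r_0$. This set is chosen so that the configuration is invariant under the hyperoctahedral symmetry group acting on $x_1,\dots,x_n$, which keeps the $n$ eigenvalues near $\mu_2$ equal to one another. I will use the standard asymptotic expansion for small Neumann holes. For a hole $\varepsilon E$ with $E$ an ellipsoid, the first-order eigenvalue shift is
\[
 \varepsilon^n\bigl(|E|\,(\mu u^2-|\nabla u|^2)+\text{polarization term } \nabla u\cdot P_E\nabla u\bigr)(p)
\]
up to normalization. Here $P_E$ is the (negative semidefinite direction) virtual mass/polarization tensor of $E$. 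Choosing oblate ellipsoids flattened in the direction of $\nabla u$ makes $|E|$ small while the polarization term stays controlled. I choose the orientation to maximize the gain, and I verify that the sign condition on the averaged density forces a strict increase of all $n$ perturbed eigenvalues at order $\varepsilon^n$. Because of the symmetry, the averaged expression controls each eigenvalue.

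Third, I restore the volume by replacing $\theta$ with $\theta_\varepsilon$, where $|B_{\theta_\varepsilon}|-2n\varepsilon^n|E|=|B_\theta|$. The resulting radial change in $\mu_2$ is of order $\varepsilon^n$ with an explicit derivative $\partial_\theta\mu_2(B_\theta)<0$. The net sign therefore again reduces to a pointwise inequality at $r_0$, which is the same one as in \cite{BucurLaugesenMartinetNahon2025} with dimensional constants. I must also check that the radial eigenvalue $\mu$ and the higher modes stay above, which holds by continuity for small $\varepsilon$. Smoothness and simple connectedness are automatic for $n\ge3$, since removing balls-like holes from a ball keeps $\pi_1$ trivial. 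The boundary has $2n+1$ components. The harmonic-mean statement follows because all $\mu_2,\dots,\mu_{n+1}$ exceed $\mu_2(B)$.

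The main obstacle is the rigorous first-order asymptotics for a degenerate eigenvalue of multiplicity $n$ under small oblate holes in the spherical metric. This requires the polarization tensor of an ellipsoid and error bounds uniform in the eccentricity, along with the numerical verification that the combined coefficient is positive for some $(\theta,r_0)$ in every dimension $n\ge3$. For that verification I would first try $\theta$ near $\pi$, using the explicit limiting eigenfunctions $x_i$.
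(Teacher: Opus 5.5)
Your architecture matches the paper's: a cap of radius near $\pi$, $2n$ holes at $(r_0,\pm e_i)$, a first-order cluster expansion with a volume term and a polarization term, symmetry forcing $\mathbf B=cI$, then comparison with the equal-volume cap. However, the step meant to make $c>0$ is wrong.

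\textbf{The hole shape and orientation.} Negativity of $\sum_i(|\nabla u_i|^2-\mu u_i^2)$ at $r_0$ only makes the volume term positive. The polarization term $-T(\nabla u,\nabla u)\le0$ competes with it and, for round holes, wins. With $L_i=1/n$ the coefficient is at most a positive multiple of $\lambda(R)-n/\sin^2 r_0$. This is negative, because $\lambda(R)<n$ for $R$ near $\pi$.

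So the shape must make $T(q,q)/(|\Sigma|\,|q|^2)=L/(1-L)$ small for every gradient $q$ that occurs. That happens exactly when $q$ lies in the long-axis hyperplane of a thin oblate ellipsoid.

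Your prescription, flattening in the direction of $\nabla u$, does the reverse. Along the short axis $1-L_n=(n-1)L(\tau)=O(\tau)$ and $|\Sigma_\tau|=O(\tau)$. Hence $T(e_n,e_n)$ stays bounded below by a positive constant (the thin-disk polarization) while $|\Sigma_\tau|\to0$, and the shift becomes negative.

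There is also a further obstruction. At $p_{+i}$ the gradients of the eigenfunctions are $\kappa f'(r_0)\partial_r$ for $u_i$, and $\kappa f(r_0)/\sin r_0$ times a unit angular vector in the $e_j$ direction for $j\ne i$. These span the whole tangent space unless $f'(r_0)=0$, so no single ellipsoid can keep them all off its short axis.

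The missing idea is threefold:
\begin{itemize}
\item take $r_0$ to be the interior critical point of $f_R$, which lies near $\pi/2$ for $R$ near $\pi$;
\item get $\lambda(R)>(n-1)/\sin^2 r_0$ there from the radial ODE, since $f_R''(r_0)<0$;
\item flatten the ellipsoid radially.
\end{itemize}
Then $c$ is a positive multiple of $\lambda(R)-\frac{n-1}{(1-L(\tau))\sin^2 r_0}$, which is positive once $\tau$ is fixed small. Fixing $\tau$ before letting $\varepsilon\to0$ also removes your concern about error bounds uniform in the eccentricity.

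\textbf{The sign of the radius derivative.} Your claim $\partial_\theta\mu_2(B_\theta)<0$ is false in the regime you use. In fact $\lambda'(R)$ has the sign of $\frac{n-1}{\sin^2 R}-\lambda(R)$, which is positive as $R\uparrow\pi$.

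With the correct sign, your third step needs no balance. The holes raise the whole cluster above $\lambda(R)$ and also lower the volume. The equal-volume cap therefore has radius $R_\varepsilon<R$, so $\mu_2(B_\varepsilon)=\lambda(R_\varepsilon)<\lambda(R)$.

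You do have to prove $\lambda'>0$ near $\pi$, for instance via the Wronskian (Hadamard-type) formula for the radial problem. The reduction to a pointwise inequality ``as in \cite{BucurLaugesenMartinetNahon2025}'' that you propose is neither justified nor needed.
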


{In contrast to the two-dimensional case, even within the class of smooth simply connected domains, a
geodesic ball of the same volume need not maximize the first positive
Neumann eigenvalue on a higher-dimensional sphere.  This construction
does not decide the weaker problem under the volume constraint of at
most one hemisphere. Whether that weaker version holds remains open.}

{Section~\ref{sec:radial} records the smooth harmonic-mean comparison and the concepts of Green ridal function used below. Section~\ref{sec:lipschitz} proves the Lipschitz extension and its equality rigidity, including the required domain and metric approximation. Section~\ref{sec:highdim-counterexample} constructs the higher-dimensional counterexamples.}

\section{{Smooth harmonic-mean comparison and Green radial function}}\label{sec:radial}
{
The smooth harmonic-mean comparison is following.
\begin{proposition}[{\cite[Corollary~3.2]{ChenYang2026}}]\label{thm:smooth-main}
Let $(\Om,g)$ be a compact simply connected smooth Riemannian surface with nonempty smooth boundary. If $K_g\leq K$, $K>0$, and $M=|\Om|_g<4\pi/K$, then
\begin{equation}\label{eq:main-smooth}
 \frac1{\mu_2(\Om,g)}+\frac1{\mu_3(\Om,g)}
 \geq\frac2{\mu_2(D_K(M))}.
\end{equation}
\end{proposition}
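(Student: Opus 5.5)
We plan to prove Proposition~\ref{thm:smooth-main} with a Szeg\H{o}--Weinberger type argument adapted to curvature bounded above. The test functions are built from the Neumann eigenfunction of the model disk, transplanted along a Green-type radial coordinate centred at a balanced pole. First, uniformization gives a conformal diffeomorphism $\Phi:\D\to\Om$, so that $g=\rho|dz|^2$ on $\D$. On the model cap $D_K(M)$, choose geodesic polar coordinates $(r,\theta)$ with radius $R$ determined by the area $M$. Then $\mu_2(D_K(M))$ has eigenfunctions $f(r)\cos\theta$ and $f(r)\sin\theta$, where $f$ is increasing on $[0,R]$. Extend $f$ by the constant $f(R)$ for $r>R$; this is the standard Weinberger extension. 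For a point $p\in\Om$, we define a radial function $r_p$ on $\Om$ by transporting the model distance through the Green function of $\Om$ with pole $p$. Level sets of the Green function enclose prescribed areas, and $r_p$ is the model radius enclosing the same $g$-area. On $\D$ we define the angular factor by composing with the M\"obius map sending $p$ to $0$. The test functions are
\[
 u_1=f(r_p)\cos\theta_p,\qquad u_2=f(r_p)\sin\theta_p .
\]

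Second, we choose $p$ to be a balanced pole, meaning $\int_\Om u_1\dv=\int_\Om u_2\dv=0$. Existence follows from a degree argument. As $p\to\partial\Om$, the vector field $p\mapsto\bigl(\int u_1,\int u_2\bigr)$ points outward, in the sense of Hersch's conformal centre of mass lemma. Brouwer's theorem then yields a zero. Given a balanced pole, the span of $u_1,u_2$ is orthogonal to constants. The variational characterization of the sum of reciprocals (Poincar\'e, or Hersch's trace inequality) then gives
\[
 \frac1{\mu_2}+\frac1{\mu_3}\geq \frac{\int u_1^2\dv}{\int|\nabla u_1|^2\dv}+\frac{\int u_2^2\dv}{\int|\nabla u_2|^2\dv}.
\]
After the usual rotation making the Gram matrices simultaneously diagonal, it suffices to show
\[
 \int_\Om (u_1^2+u_2^2)\dv\ \geq\ \frac1{\mu_2(D_K(M))}\int_\Om \bigl(|\nabla u_1|^2+|\nabla u_2|^2\bigr)\dv .
\]

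Third, we compare both sides with the model. By conformal invariance of the Dirichlet integral, the gradient term splits into two parts. The radial part, $\int f'(r_p)^2|\nabla r_p|^2$, is handled by the coarea formula. The angular part, $\int f(r_p)^2|\nabla\theta_p|^2$, only depends on $f$ and the level-set structure of the Green function. The curvature bound $K_g\le K$ enters through a Bol/Alexandrov isoperimetric inequality on the level sets of the Green function. It yields $|\nabla r_p|\le 1$ in an averaged sense, and it bounds the angular energy of $f(r_p)$ by that of the model. For the left side, we need $\int f(r_p)^2\dv \geq \int_{D_K(M)}f(r)^2$. This holds because $f^2$ is increasing and $r_p$ is equimeasurable with, or dominated by, the model radius. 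Combining the two comparisons with the model identity $\int|\nabla(f\cos\theta)|^2+|\nabla(f\sin\theta)|^2=\mu_2\int f^2$ gives the desired inequality.

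The main obstacle is the gradient comparison in the regime $KM>2\pi$. There $f^2$ and $f'^2$ do not have the monotonicity used in Bandle's and Langford--Laugesen's range. The point of the Green-radial construction of Provenzano--Savo and Chen--Yang is that the isoperimetric deficit, measured via Bol's inequality $L^2\ge 4\pi A-KA^2$, is integrated against a weight that stays nonnegative all the way up to $KM<4\pi$. We would first try to prove this by writing the energy as $\int_0^R\bigl(f'^2+f^2/s(r)^2\bigr)\,\phi(r)\,dr$, where $\phi(r)$ is the level-set area density. We would then integrate by parts, using the model ODE for $f$, to reduce everything to a sign condition on $(f^2)'$ against the isoperimetric deficit.
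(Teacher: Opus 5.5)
Your first two steps are fine. By conformal invariance, $u_1$ and $u_2$ are Dirichlet-orthogonal with equal energies, so reducing to the summed inequality is legitimate, and the balancing works for a fixed profile. The third step, however, cannot work, and not for a technical reason.

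Pull back to the recentred disk and use the area variable $a=\mathcal A(|z|)$. Write $w(a)=f(r_K(a))$, where $r_K(a)$ is the model radius enclosing area $a$. Since $r_p$ is area-matched, the masses agree exactly: $\int_\Om(u_1^2+u_2^2)\dv=\int_0^Mw^2\,da$. So no monotonicity of $f$ is needed. That is fortunate, because for $KM$ close to $4\pi$ one has $f'(R)=0<f''(R)$, so $f$ has an interior maximum and is not increasing on $[0,R]$. The energy, on the other hand, is
\[
\int_\Om\bigl(|\nabla u_1|^2+|\nabla u_2|^2\bigr)\dv
=\int_0^M\Bigl(G_p\,w'^2+\frac{4\pi^2}{G_p}\,w^2\Bigr)\,da ,
\]
where $G_p(a)=2\pi r\mathcal A'(r)$ at $a=\mathcal A(r)$. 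The model cap gives the same expression with $G_K(a)=a(4\pi-Ka)$, and that equals $\mu_2(D_K(M))\int_0^Mw^2\,da$.

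The curvature bound (Chavel--Feldman plus Cauchy--Schwarz) gives $G_p\ge G_K$. This lowers the angular term but \emph{raises} the radial term. Equivalently, equimeasurability and the isoperimetric inequality give $\int_{\{r_p=s\}}|\nabla r_p|\,d\ell\ge L_K(s)$, which is the reverse of your claim that $|\nabla r_p|\le1$ on average. The map $G\mapsto Gw'^2+4\pi^2w^2/G$ is convex with minimum at $G=2\pi w/|w'|$. Hence no integration by parts against the isoperimetric deficit can give the needed sign where $G_p$ is large and $w'\neq0$.

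Here is a concrete failure. Take a smoothed flat rectangle of area $M$ and length $N$, so $K_g=0\le K$. Every Green level curve enclosing area $a$ has length at least $2aN/M$, so $G_p(a)\ge 4a^2N^2/M^2$ for every pole. On an interval where $|w'|\ge c>0$, the energy of $f(r_p)e^{i\theta_p}$ therefore grows like $N^2$ while its mass stays fixed. Your lower bound for $\mu_2^{-1}+\mu_3^{-1}$ then tends to $0$, even though the inequality itself is true for these domains.

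The missing idea is to let the radial profile adapt to $\Om$ and $p$; this is what the route behind Proposition~\ref{prop:balanced} does.
\begin{itemize}
\item Take the minimizer $u_p$ of the restricted Green-radial problem. Its value is $\kappa_p=\kappa(G_p;M)$ in the notation of Lemma~\ref{lem:lip-rigidity-coefficient}.
\item Balance the pole for this $p$-dependent family. This requires continuity of $p\mapsto u_p$ and control as $p\to\partial\Om$, as in Provenzano--Savo.
\item The reciprocal variational principle then gives $2/(\mu_2^{-1}+\mu_3^{-1})\le\kappa_{\bar p}$.
\item Finally, $\kappa(G_p;M)\le\kappa(G_K;M)=\mu_2(D_K(M))$ by Lemmas~\ref{lem:lip-rigidity-coefficient} and~\ref{lem:lip-rigidity-model-identification}.
\end{itemize}
This monotonicity in the coefficient is not a fixed-test-function estimate. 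Testing the $G_p$-problem with the model profile would require $w'^2\le 4\pi^2w^2/(G_KG_p)$, which is exactly what fails above. Instead it is proved by interpolating a path $G_t$ from $G_K$ to $G_p$ with $\dot G_t\ge0$. At each $t$ one uses the positive ground state $w_t$ of the current coefficient, which satisfies $|G_tw_t'|<2\pi w_t$. Consequently the upper Dini derivative of $t\mapsto\kappa(G_t)$ is at most $\int\dot G_t\bigl(w_t'^2-4\pi^2w_t^2/G_t^2\bigr)\,da\big/\!\int w_t^2\,da\le0$.
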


For the proof in the Lipschitz case, we also require some intermediate results for the smooth case., we also need the intermediate radial value in the smooth argument. Retain the assumptions of Proposition~\ref{thm:smooth-main}. For an interior point $p$, let $\psi_p$ be the Dirichlet Green function normalized by $-\Delta_g\psi_p=\delta_p$ and $\psi_p=0$ on $\partial\Om$. Choose an orientation-preserving conformal map $\Phi_p:\D\to\Om$ with $\Phi_p(0)=p$. With the Hodge-star convention $\alpha\wedge\star_g\beta=\langle\alpha,\beta\rangle_g\,dv_g$, set $A_p=-2\pi\star_g d\psi_p$. Then
\begin{equation}\label{eq:potential-pullback}
 \psi_p\circ\Phi_p(z)=-\frac{\log|z|}{2\pi},
 \qquad \Phi_p^*A_p=d\theta.
\end{equation}
The potential has integer flux one. Hence there is a single-valued phase $e^{i\Theta_p}$ with $d\Theta_p=A_p$ locally and, after normalization, $\Phi_p^*e^{i\Theta_p}=e^{i\theta}$.

Write $d^{A_p}u=du-iA_pu$. The magnetic form domain $H^1_{A_p}(\Om;\mathbb C)$ is the completion of smooth functions on $\overline\Om$ that vanish near $p$, for the norm squared $\int_\Om(|u|^2+|d^{A_p}u|^2)\,dv_g$. The integer-flux gauge isomorphism gives
\begin{equation}\label{eq:gauge-isometry}
 \begin{gathered}
 U_p:H^1(\Om;\mathbb C)\longrightarrow H^1_{A_p}(\Om;\mathbb C),
 \qquad U_pf=e^{i\Theta_p}f,\\
 \int_\Om|d^{A_p}U_pf|^2\,dv_g=\int_\Om|df|^2\,dv_g,
 \qquad \int_\Om|U_pf|^2\,dv_g=\int_\Om|f|^2\,dv_g,
 \end{gathered}
\end{equation}
and
\begin{equation}\label{eq:U_p-inverse}
 U_p^{-1}u=e^{-i\Theta_p}u\in H^1(\Om;\mathbb C).
\end{equation}
See \cite[Section~2.3 and Lemma~7.3]{ProvenzanoSavo2026} and \cite[Appendix~A.1]{ColboisProvenzanoSavo2022} for these identities and the form-domain convention.

Let $\mathcal R_p$ be the real subspace of $H^1_{A_p}$ consisting of functions whose pullbacks by $\Phi_p$ depend only on $|z|$. Define
\begin{equation}\label{eq:kappa-def}
 \kappa_p=\inf_{0\ne u\in\mathcal R_p}
 \frac{\displaystyle\int_\Om|d^{A_p}u|^2\,dv_g}
      {\displaystyle\int_\Om u^2\,dv_g}.
\end{equation}
Its positive $L^2$-normalized minimizer is denoted by $u_p$; see \cite[Lemma~3.1 and the proof of Lemma~5.1]{ProvenzanoSavo2026}. This is the first eigenvalue of the restricted Green-radial problem, not of the full magnetic operator.

\begin{proposition}\label{prop:balanced}
Under the assumptions of Proposition~\ref{thm:smooth-main}, there is a pole $\bar p\in\Om$ such that
\[
 \int_\Om u_{\bar p}e^{-i\Theta_{\bar p}}\,dv_g=0
\]
and
\begin{equation}\label{eq:balanced-bound}
 \frac2{\mu_2(\Om,g)^{-1}+\mu_3(\Om,g)^{-1}}
 \leq\kappa_{\bar p}\leq\mu_2(D_K(M)).
\end{equation}
\end{proposition}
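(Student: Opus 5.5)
The proof has three parts: find a balanced pole by a topological degree argument, turn the balancing into the harmonic-mean bound through the gauge isometry \eqref{eq:gauge-isometry}, and compare $\kappa_{\bar p}$ with the model disk by symmetrization in the Green-radial variable.

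\textbf{Balanced pole.} Define
\[
F(p)=\int_\Om u_pe^{-i\Theta_p}\,dv_g\in\mathbb C .
\]
Since $\kappa_p$ is a simple eigenvalue of the restricted radial problem with a positive minimizer, $u_p$ depends continuously on $p$, and hence so does $F$. Pulling back by $\Phi_p$, we get
\[
F(p)=\int_{\D}(u_p\circ\Phi_p)(|z|)\,e^{-i\theta}\,\omega_p(z)\,dA ,
\qquad \omega_p=|\Phi_p'|^2\,\omega\circ\Phi_p .
\]
As $p\to\partial\Om$, the density $\omega_p$ concentrates near the boundary point of $\D$ that corresponds to the nearby part of $\partial\Om$. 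So $F(p)$ should point approximately in the direction $\overline{\zeta}$ with $\zeta\in\partial\D$ (or $\zeta$, depending on convention). Then $F$ restricted to a curve close to $\partial\Om$ has winding number $\pm1$, and $F$ must vanish at some $\bar p\in\Om$. This is the step I expect to be the main obstacle: one has to control $u_p$ uniformly as $p$ degenerates. My first attempt would use the explicit Möbius reparametrization $\Phi_p=\Phi_0\circ\phi_a$ and the fact that radial minimizers are nonnegative and bounded away from degenerate concentration, following the Hersch-type centre-of-mass lemma as in \cite{ProvenzanoSavo2026}.

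\textbf{Harmonic-mean bound.} At the balanced pole, set $f=U_{\bar p}^{-1}u_{\bar p}=e^{-i\Theta_{\bar p}}u_{\bar p}\in H^1(\Om;\mathbb C)$. Then $\int_\Om f\,dv_g=0$. Write $f=f_1+if_2$ with $f_1,f_2$ real and orthogonal to the constants. Because $u$ is real radial and the phase is $e^{-i\theta}$, the pullbacks are $u(|z|)\cos\theta$ and $-u(|z|)\sin\theta$. This gives the relations
\[
\int_\Om f_1f_2\,dv_g=0,\qquad \int_\Om |df_1|^2-|df_2|^2\,dv_g=0,\qquad \int_\Om \langle df_1,df_2\rangle\,dv_g=0,
\]
and the Dirichlet integral and the mass are split equally between $f_1$ and $f_2$. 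These hold by conformal invariance of the Dirichlet integral and the symmetry $\theta\mapsto\theta+\pi/2$, although the mass identities need the weight and so call for care. If the mass split is not equal, I would instead apply the standard trace inequality for the two-dimensional space $\Span\{f_1,f_2\}$,
\[
\frac1{\mu_2}+\frac1{\mu_3}\ \ge\ \operatorname{tr}\bigl(Q^{-1}N\bigr),
\]
where $Q$ and $N$ are the Dirichlet and $L^2$ Gram matrices. Combined with
\[
\operatorname{tr}Q=\int_\Om|d^{A}u|^2\,dv_g=\kappa_{\bar p},\qquad \operatorname{tr}N=1,
\]
and the convexity inequality $\operatorname{tr}(Q^{-1}N)\ge(\operatorname{tr}N)^2/\operatorname{tr}Q$, which holds when $Q$ and $N$ are simultaneously diagonal (as the symmetric structure gives), this yields
\[
\frac1{\mu_2}+\frac1{\mu_3}\ \ge\ \frac{2}{\kappa_{\bar p}} .
\]

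\textbf{Comparison with the model.} To prove $\kappa_{\bar p}\le\mu_2(D_K(M))$, take the model's radial eigenfunction $v(r)$ on $D_K(M)$ and transplant it by matching Green-function level sets by area, giving a test function $u(\psi_{\bar p})$. The coarea formula together with the isoperimetric inequality for sublevel sets of $\psi$ (Alexandrov--Bol, using $K_g\le K$ and $KM<4\pi$) shows that the Rayleigh quotient of the transplant is bounded by that of the model. This is the Provenzano--Savo radial comparison and can be cited from \cite{ProvenzanoSavo2026,ChenYang2026}.
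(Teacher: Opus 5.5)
Your three-part outline matches the paper, which proves this proposition purely by citation. The centering and the radial comparison come from \cite{ProvenzanoSavo2026}. The first inequality is the reciprocal variational principle applied to the two real phase components, as in \cite{ChenYang2026}. Deferring the balanced pole to Provenzano--Savo is therefore fine. But two of the steps you spell out fail as written.

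In the harmonic-mean step, only your energy relations are correct. The pulled-back weight $\sigma=(\omega\circ\Phi_{\bar p})|\Phi_{\bar p}'|^2$ is not radial. So $\int_\Om f_1f_2\,dv_g=-\int_{\D}u^2\cos\theta\sin\theta\,\sigma\,dA$ and $N_{11}-N_{22}$ need not vanish, and $N$ is not diagonal in general. Your fallback also loses a factor of two. With $\operatorname{tr}N=1$ and $\operatorname{tr}Q=\kappa_{\bar p}$, the bound $(\operatorname{tr}N)^2/\operatorname{tr}Q$ gives only $1/\kappa_{\bar p}$. The factor cannot be recovered by a general convexity bound even for simultaneously diagonal matrices: take $N=\operatorname{diag}(1,0)$ and $Q=\operatorname{diag}(q_1,q_2)$ with $q_1>q_2$. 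The fix uses what you already have. Conformal invariance of the Dirichlet integral gives $Q=\frac{\kappa_{\bar p}}{2}I_2$ exactly. Hence $\operatorname{tr}(Q^{-1}N)=\frac{2}{\kappa_{\bar p}}\operatorname{tr}N=\frac{2}{\kappa_{\bar p}}$, whatever $N$ is. The trace (Ky Fan) inequality applies because balancing makes $f_1,f_2$ orthogonal to constants, and it gives the left half of \eqref{eq:balanced-bound}.

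In the comparison step, transplanting the model profile by area and invoking coarea plus the isoperimetric inequality does not bound the quotient. In the area variable the transplant's numerator is $\int_0^M(G_{\bar p}v'^2+4\pi^2v^2/G_{\bar p})\,da$, with $G_{\bar p}(a)=2\pi r\mathcal A'(r)$. The isoperimetric inequality only gives $G_{\bar p}\ge G_K=a(4\pi-Ka)$, which raises the first term and lowers the second. It really can go the wrong way. Take a flat disk and let $KM\uparrow4\pi$. The model profile tends to $v^2\propto a(4\pi-Ka)$, the transplanted quotient diverges, and yet $\lambda_K(M)\to2K$.

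Matching Green values instead of areas keeps the energy fixed, and the isoperimetric inequality then gives $\mathcal A(r)\le\mathcal A_K(r)$. But the resulting mass comparison needs the model profile to increase in $r$, and for large caps it does not: as $KM\uparrow4\pi$ it tends to the sine of the polar angle, which peaks near the equator.

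The ingredient that does the work is the Provenzano--Savo monotonicity $G_1\ge G_0\Rightarrow\kappa(G_1)\le\kappa(G_0)$ (Lemma~\ref{lem:lip-rigidity-coefficient} here, from \cite[Lemma~5.1]{ProvenzanoSavo2026}). It is proved by interpolating between the coefficients. The derivative $\int\dot G_t\,(u_t'^2-4\pi^2u_t^2/G_t^2)\,da$ is nonpositive because positive radial minimizers satisfy $G|u'|\le2\pi u$. Since you end by citing Provenzano--Savo, the argument survives by citation, but the mechanism you describe is not the one that works.
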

The centering and radial comparison are those of \cite[Theorems~4.1--4.3 and Claim~7.1]{ProvenzanoSavo2026}. The first inequality is the intermediate estimate in \cite[Section~3, formula~(3.5)]{ChenYang2026}, obtained by applying the reciprocal variational principle to the two real phase components. As in their Corollary~3.2, this argument uses the curvature upper bound through radial comparison and applies to the surfaces considered here. We shall use \eqref{eq:balanced-bound} on smooth approximations without repeating that variational proof.
}

\section{The harmonic-mean inequality and equality rigidity in the Lipschitz setting}\label{sec:lipschitz}
This section proves Theorem~\ref{thm:lipschitz-main}. Recall the variational characterization of the Neumann eigenvalues:
\begin{equation}\label{eq:weighted-minmax}
  \mu_k(\Om;\omega):=\min_{\substack{V\subset H^1(\Om)\\\dim V=k}}
   \max_{0\neq u\in V}
   \frac{\displaystyle\int_\Om|\nabla u|^2\,dA}
        {\displaystyle\int_\Om u^2\omega\,dA};
\end{equation}
see \cite[Section~4.5]{deb1995}.
{The possible vanishing of $\omega$ at the boundary does not obstruct this variational definition. Fix a disk $U_0\Subset\Om$. The Poincar\'e inequality on the connected Lipschitz domain $\Om$, together with $\min_{\overline{U_0}}\omega>0$, gives
\[
 \|u\|_{H^1(\Om)}^2\leq C\left(\int_\Om|\nabla u|^2\,dA+\int_\Om u^2\omega\,dA\right).
\]
Thus the form norm is equivalent to the usual $H^1$ norm. Moreover, $\omega\in L^\infty(\Om)$ and Rellich compactness imply that $H^1(\Om)\hookrightarrow L^2(\Om,\omega dA)$ is compact. The Neumann form therefore has discrete spectrum and the stated min--max characterization.}
The following lemma is the basis of the smooth-approximation argument.

\begin{lemma}\label{lem:spectral-convergence}
Let $\Om\subset\mathbb R^2$ be a bounded connected Lipschitz domain, and let $\omega\in C(\Om)\cap L^\infty(\Om)$ be positive in $\Om$. There exist nested smooth domains $\Om_j\Subset\Om_{j+1}\Subset\Om$, having the same number of boundary components as $\Om$, such that
    \begin{equation}\label{eq:spectral-convergence}
    	\bigcup_j\Om_j=\Om,
    	\qquad
    	\mu_k(\Om_j;\omega)\longrightarrow\mu_k(\Om;\omega)
    	\quad\text{for every fixed }k.
    \end{equation}
If $\Om$ is simply connected, the domains $\Om_j$ may be chosen simply connected.
\end{lemma}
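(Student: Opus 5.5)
The plan is to build $\Om_j$ from a standard Lipschitz interior approximation and then prove convergence of each $\mu_k$ by matching an upper bound (restriction of test spaces) with a lower bound (uniform extension plus compactness).

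\emph{Construction.} Since $\Om$ is Lipschitz, near each boundary point it is the region above a Lipschitz graph in suitable coordinates. Take a regularized distance function $\rho$ to $\partial\Om$: smooth in $\Om$, comparable to $\operatorname{dist}(\cdot,\partial\Om)$, and with gradient uniformly transversal to $\partial\Om$ near the boundary. In each chart, $\nabla\rho$ has a positive component along the fixed cone direction. For a sequence $t_j\downarrow0$ of regular values, set $\Om_j=\{\rho>t_j\}$. Transversality shows that $\partial\Om_j$ is a smooth graph over the same chart bases, with Lipschitz constants uniformly bounded in $j$. Hence:
\begin{itemize}
\item $\Om_j$ is diffeomorphic to $\Om$ via a flow along a smooth transversal field, so it has the same number of boundary components and is simply connected when $\Om$ is;
\item $\Om_j\Subset\Om_{j+1}$ and $\bigcup_j\Om_j=\Om$;
\item the $\Om_j$ are \emph{uniformly Lipschitz}.
\end{itemize}
I expect the last property to be the main technical point. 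If a general regularized distance does not give it cleanly, I would instead use local graph mollification glued by a partition of unity, as in Verchota's or Ne\v{c}as's approximation scheme.

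\emph{Upper bound.} For a fixed $k$, take an optimal space $V\subset H^1(\Om)$ in the min--max characterization \eqref{eq:weighted-minmax}, spanned by eigenfunctions $u_1,\dots,u_k$. Restrict it to $\Om_j$. The numerators $\int_{\Om_j}|\nabla u|^2\,dA$ and denominators $\int_{\Om_j}u^2\omega\,dA$ converge uniformly on the unit sphere of $V$ by dominated convergence, using $\omega\in L^\infty$. The restricted space stays $k$-dimensional for large $j$. Therefore $\limsup_j\mu_k(\Om_j;\omega)\le\mu_k(\Om;\omega)$.

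\emph{Lower bound.} Let $v^j_1,\dots,v^j_k$ be $L^2(\Om_j,\omega\,dA)$-orthonormal eigenfunctions on $\Om_j$. Their Dirichlet energies are bounded by the upper bound just proved. Uniform Lipschitz character gives extension operators $E_j:H^1(\Om_j)\to H^1(\mathbb R^2)$ with norms bounded independently of $j$ (Stein or Calder\'on extension with constants depending only on the Lipschitz data). To apply $E_j$, first control the $H^1(\Om_j)$ norm of $v^j_i$. Use the Poincar\'e-type inequality from the preceding discussion on a fixed disk $U_0\Subset\Om_1$. A uniform Poincar\'e inequality on $\Om_j$ also follows from the uniform extension together with Rellich compactness, by a contradiction argument.

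By Rellich compactness on $\Om$, a subsequence of $E_jv^j_i$ converges weakly in $H^1(\Om)$ and strongly in $L^2(\Om)$ to some $w_i$. The sets $\Om\setminus\Om_j$ have measure tending to $0$ and the extensions are uniformly bounded in $L^4$ by Sobolev embedding. Consequently the $w_i$ remain orthonormal in $L^2(\Om,\omega\,dA)$. Lower semicontinuity of the Dirichlet energy, applied on each fixed $\Om_m$ and then letting $m\to\infty$, gives
\[
\int_\Om|\nabla w|^2\,dA\le\liminf_j\mu_k(\Om_j;\omega)\int_\Om w^2\omega\,dA
\]
for every combination $w$ of the $w_i$. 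The min--max principle then yields $\mu_k(\Om;\omega)\le\liminf_j\mu_k(\Om_j;\omega)$. Since every subsequence has a further subsequence with this property, the whole sequence converges.
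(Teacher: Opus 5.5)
Your proposal is correct and follows essentially the same route as the paper. Both arguments use level sets of a regularized distance whose gradient is uniformly transversal in Lipschitz charts; the paper takes this from Ball--Zarnescu and obtains uniformly bounded extension operators through Chenais's uniform cone property rather than Stein's theorem. Both get the upper bound by restricting the first $k$ eigenfunctions, and the lower bound from an anchored uniform Poincar\'e inequality proved by contradiction, Rellich compactness, and lower semicontinuity on fixed interior sets.

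The only cosmetic difference is how weighted orthonormality of the limits is preserved. You use a uniform $L^4$ bound together with $|\Om\setminus\Om_j|\to0$, whereas the paper compares zero extensions in $L^2(\Om,\omega\,dA)$ and uses absolute continuity of the integral. Both work.
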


\begin{proof}
We first construct the smooth approximation. Let $d(x)$ be the signed distance to $\partial\Om$, positive in $\Om$. By \cite[Proposition~3.1]{BallZarnescu2017}, there is a regularized distance function
\[
  \rho\in C^\infty(\mathbb R^2\setminus\partial\Om)
        \cap C^{0,1}(\mathbb R^2),
  \qquad \frac12\leq\frac{\rho(x)}{d(x)}\leq2
  \quad(x\notin\partial\Om),
\]
such that $\nabla\rho$ does not vanish at points near, but not on, the boundary. Thus, for all sufficiently small $\varepsilon>0$, the domains enclosed by the level sets,
\[
  \Om_\varepsilon=\{x:\rho(x)>\varepsilon\}
\]
have smooth boundary. Moreover, \cite[Theorem~5.1(i)--(ii)]{BallZarnescu2017} guarantees that these domains form a nested exhaustion of $\Om$ from within and that there is a homeomorphism carrying $\overline\Om$ onto $\overline{\Om_\varepsilon}$ and $\partial\Om$ onto $\partial\Om_\varepsilon$. Choosing sufficiently small $\varepsilon_j\downarrow0$ and setting $\Om_j=\Om_{\varepsilon_j}$ therefore yields the required nested inner approximation. The number of boundary components is preserved, and if $\Om$ is simply connected then every $\Om_j$ is simply connected as well.

We next establish uniformity in $j$ of the boundary estimates. Since $\Om$ is a Lipschitz domain, finitely many boundary coordinate neighborhoods can be fixed so that in each one $\Om$ lies above a Lipschitz graph. By \cite[Remark~5.3]{BallZarnescu2017}, in the portion of $\Om$ contained in each coordinate neighborhood there are constants $c,C>0$, independent of $\varepsilon$, such that
\[
  \partial_{x_2}\rho\geq c>0,
  \qquad |\nabla\rho|\leq C
\]
where $c,C$ may depend on the coordinate neighborhood. After shrinking the coordinate neighborhoods and decreasing $\varepsilon_1$, every $\partial\Om_j$ can be written in those same neighborhoods as a smooth graph $x_2=\varphi_j(x_1)$. From
\[
  \rho(x_1,\varphi_j(x_1))=\varepsilon_j
  \quad\Longrightarrow\quad
  \varphi_j'(x_1)
  =-\frac{\partial_{x_1}\rho}{\partial_{x_2}\rho}
       (x_1,\varphi_j(x_1)),
  \qquad |\varphi_j'|\leq C/c,
\]
and the finiteness of the collection of coordinate neighborhoods, we obtain a common slope bound $L$ controlling the Lipschitz graph at every point in every neighborhood.

By \cite[Proposition~III.2]{Chenais1975}, the domains $\Omega_j$ satisfy a uniform cone property. More precisely, for every $q\in\partial\Om_j$ there is an inward-pointing cone $\mathcal C_{j,q}$ with vertex at the origin, aperture parameter $\theta$, and height $h$, such that
\[
  \bigl(B(q,r)\cap\Om_j\bigr)+\mathcal C_{j,q}
  \subset\Om_j.
\]
The direction of the cone may depend on $q$ and $j$, while $\theta,h,r$ remain fixed. Choose a ball $B$ containing $\overline\Om$. Applying \cite[Theorem~II.1]{Chenais1975} with $m=1$, first extending functions to $\mathbb R^2$ and then restricting them to $B$, gives linear extension operators
    \begin{equation}\label{eq:extesion}
    	\mathcal E_j:H^1(\Om_j)\longrightarrow H^1(B),
    	\qquad (\mathcal E_jv)|_{\Om_j}=v,
    	\qquad
    	\|\mathcal E_jv\|_{H^1(B)}
    	\leq C_E\|v\|_{H^1(\Om_j)},
    \end{equation}
where $C_E$ is independent of $j$.

Fix an open disk $U_0$ such that $\overline{U_0}\subset\Om_1$. The uniform extension property above and connectedness of the domains give the following Poincar\'{e} estimate:
\begin{equation}\label{eq:anchored-poincare}
  \|v\|_{L^2(\Om_j)}^2
  \leq C\left(\|\nabla v\|_{L^2(\Om_j)}^2
        +\|v\|_{L^2(U_0)}^2\right),
  \qquad v\in H^1(\Om_j),
\end{equation}
where $C$ is independent of $j$. Here are the details.

First we prove \eqref{eq:anchored-poincare} with a constant $C_j$ that may depend on $j$. Let
\[
  m_j=\frac1{|\Om_j|}\int_{\Om_j}v\,dA{,}
\]
be the average of $v$ over $\Om_j$. The usual Poincar\'{e} inequality gives a constant $P_j$, possibly depending on $j$, such that
\[
  \|v-m_j\|_{L^2(\Om_j)}^2
  \leq P_j\|\nabla v\|_{L^2(\Om_j)}^2.
\]
For $m_j$, we have
\[
  \begin{aligned}
    |U_0|\,|m_j|^2
    &=\int_{U_0}|v-(v-m_j)|^2\,dA\\
    &\leq2\|v\|_{L^2(U_0)}^2
          +2\|v-m_j\|_{L^2(U_0)}^2\\
    &\leq2\|v\|_{L^2(U_0)}^2
          +2P_j\|\nabla v\|_{L^2(\Om_j)}^2.
  \end{aligned}
\]
Since $\int_{\Om_j}(v-m_j)\,dA=0$, we also have the orthogonal decomposition
\[
  \|v\|_{L^2(\Om_j)}^2
  =\|v-m_j\|_{L^2(\Om_j)}^2+|\Om_j|\,|m_j|^2.
\]
Substituting the preceding two estimates gives
    \begin{equation}\label{eq:local-estimate}
    	\|v\|_{L^2(\Om_j)}^2
    	\leq P_j\left(1+\frac{2|\Om_j|}{|U_0|}\right)
    	\|\nabla v\|_{L^2(\Om_j)}^2+\frac{2|\Om_j|}{|U_0|}\|v\|_{L^2(U_0)}^2.
    \end{equation}

We now show that the constant in \eqref{eq:anchored-poincare} can be chosen independently of $j$. Otherwise, there would be $j_n\to\infty$ and $v_n\in H^1(\Om_{j_n})$ which, after normalization, satisfy
    \begin{equation}\label{eq:v_n}
    	\|v_n\|_{L^2(\Om_{j_n})}=1,
    	\qquad
    	\|\nabla v_n\|_{L^2(\Om_{j_n})}^2
    	+\|v_n\|_{L^2(U_0)}^2\longrightarrow0.
    \end{equation}
Uniform extension and Rellich compactness yield a subsequence such that $\mathcal E_{j_n}v_n\to v$ in $L^2(B)$ and weakly in $H^1(B)$. The second limit in \eqref{eq:v_n} implies that $\nabla v=0$ in $\Omega$ and $v=0$ in $U_0$. By connectedness, $v=0$ throughout $\Om$; but
\[
  1=\|\mathcal E_{j_n}v_n\|_{L^2(\Om_{j_n})}
  \leq\|\mathcal E_{j_n}v_n-v\|_{L^2(\Om)}
  \longrightarrow0,
\]
a contradiction. This proves that the constant in \eqref{eq:anchored-poincare} is uniform.

Moreover, by the continuity and positivity of $\omega$ in $\Om$,
\[
  c_0:=\min_{\overline{U_0}}\omega>0,
  \qquad
  \|v\|_{L^2(U_0)}^2
  \leq\frac1{c_0}\int_{U_0}|v|^2\omega\,dA
  \leq\frac1{c_0}\|v\|_{L^2(\Om_j,\omega dA)}^2.
\]
Substituting this into \eqref{eq:anchored-poincare} gives
\begin{equation}\label{eq:weighted-H1-control}
  \|v\|_{H^1(\Om_j)}^2
  \leq C'\left(\|\nabla v\|_{L^2(\Om_j)}^2
        +\|v\|_{L^2(\Om_j,\omega dA)}^2\right),
\end{equation}
again uniformly in $j$.

Finally, we prove spectral convergence. First choose the first $k$ eigenfunctions $f_1,\ldots,f_k$ on $\Om$, orthonormal in weighted $L^2$, so that
\[
  \int_\Om f_\ell f_m\omega\,dA=\delta_{\ell m}.
\]
For $c=(c_1,\ldots,c_k)\in\mathbb R^k$, write $f_c=\sum_{\ell=1}^k c_\ell f_\ell$ and $|c|^2=\sum_{\ell=1}^k c_\ell^2$. Then
\[
  \int_{\Om_j}|\nabla f_c|^2\,dA
  \leq\int_\Om|\nabla f_c|^2\,dA
  =\sum_{\ell=1}^k\mu_\ell(\Om;\omega)c_\ell^2
  \leq\mu_k(\Om;\omega)|c|^2.
\]
Let
\[
  \eta_j
  :=\sum_{\ell=1}^k\int_{\Om\setminus\Om_j}f_\ell^2\omega\,dA
  \longrightarrow0;
\]
By the Cauchy--Schwarz inequality $|f_c|^2\leq|c|^2\sum_{\ell=1}^k f_\ell^2$, simultaneously for all $c$ we have
\[
  \int_{\Om_j}f_c^2\omega\,dA
  =|c|^2-\int_{\Om\setminus\Om_j}f_c^2\omega\,dA
  \geq(1-\eta_j)|c|^2.
\]
For all sufficiently large $j$, $\eta_j<1$, so $f_c|_{\Om_j}$ is nonzero whenever $c\ne0$. Thus the restricted functions remain linearly independent, and
$V_j=\Span\{f_1|_{\Om_j},\ldots,f_k|_{\Om_j}\}\subset H^1(\Om_j)$ is a $k$-dimensional trial space. Applying the preceding numerator and denominator estimates in \eqref{eq:weighted-minmax} gives
\[
  \mu_k(\Om_j;\omega)
  \leq\max_{0\neq c\in\mathbb R^k}
       \frac{\displaystyle\int_{\Om_j}|\nabla f_c|^2\,dA}
            {\displaystyle\int_{\Om_j}f_c^2\omega\,dA}
  \leq\frac{\mu_k(\Om;\omega)}{1-\eta_j}.
\]
Letting $j\to\infty$, we obtain
    \begin{equation}\label{eq:mu_k-upper-bound}
    	\limsup_{j\to\infty}\mu_k(\Om_j;\omega)
    	\leq\mu_k(\Om;\omega).
    \end{equation}
Conversely, let $u_{j,1},\ldots,u_{j,k}$ be weighted-$L^2$ orthonormal eigenfunctions on $\Om_j$. The uniform extensions \eqref{eq:extesion}, estimate \eqref{eq:weighted-H1-control}, and upper bound \eqref{eq:mu_k-upper-bound} show that $\mathcal E_ju_{j,\ell}$ is bounded in $H^1(B)$. Along a subsequence attaining the lower limit, these functions converge weakly in $H^1(B)$ and strongly in $L^2(B)$ to functions $u_1,\ldots,u_k$. We need strong convergence in weighted $L^2$. Extend $u_{j,\ell}$ by zero on $\Om\setminus\Om_j$ and denote the result by $\widehat u_{j,\ell}$. Here this extension is used only as a weighted-$L^2$ function; we do not take the gradient of the zero extension. Then
\[
  \begin{aligned}
  \|\widehat u_{j,\ell}-u_\ell\|_{L^2(\Om,\omega dA)}^2
  &=
  \int_{\Om_j}|\mathcal E_ju_{j,\ell}-u_\ell|^2\omega\,dA
  +\int_{\Om\setminus\Om_j}|u_\ell|^2\omega\,dA\\
  &\leq\|\omega\|_{L^\infty(\Om)}
      \|\mathcal E_ju_{j,\ell}-u_\ell\|_{L^2(\Om)}^2
  +\int_{\Om\setminus\Om_j}|u_\ell|^2\omega\,dA
  \longrightarrow0.
  \end{aligned}
\]
The last term tends to zero by absolute continuity of the integral. Strong convergence in weighted $L^2$ therefore gives
\[
  \int_\Om u_\ell u_m\omega\,dA=\delta_{\ell m};
\]
that is, $u_1,\ldots,u_k$ are orthonormal in weighted $L^2$. For every $c=(c_1,\ldots,c_k)$ and every $U\Subset\Om$, we have $U\subset\Om_j$ for all sufficiently large $j$. Weak lower semicontinuity on this fixed set, where $\mathcal E_ju_{j,\ell}=u_{j,\ell}$, gives
\[
  \int_U\left|\nabla\sum_{\ell=1}^k c_\ell u_\ell\right|^2dA
  \leq
  \left(\liminf_{j\to\infty}\mu_k(\Om_j;\omega)\right)|c|^2.
\]
Since $U$ is arbitrary, the same estimate holds with $\Om$ in place of $U$. The span of $u_1,\ldots,u_k$ is $k$-dimensional, so \eqref{eq:weighted-minmax} gives
\[
  \mu_k(\Om;\omega)
  \leq\liminf_{j\to\infty}\mu_k(\Om_j;\omega).
\]
Together with the upper bound \eqref{eq:mu_k-upper-bound}, this proves spectral convergence.
\end{proof}

{
\begin{lemma}\label{lem:smooth-metric-approximation}
Under the assumptions of Theorem~\ref{thm:lipschitz-main}, choose the smooth inner exhaustion from Lemma~\ref{lem:spectral-convergence}. There are positive weights $\omega_j\in C^\infty(\overline\Om_j)$ and numbers $\varepsilon_j\downarrow0$ such that
\begin{equation}\label{eq:smooth-metric-approximation}
 \left\|\frac{\omega_j}{\omega}-1\right\|_{L^\infty(\Om_j)}\leq\varepsilon_j,
 \qquad \omega_j\longrightarrow\omega\quad\hbox{in }C^2_{\mathrm{loc}}(\Om),
 \qquad -\frac{\Delta\log\omega_j}{2\omega_j}\leq K.
\end{equation}
With $g_j=\omega_j|dz|^2$ and $M_j=\int_{\Om_j}\omega_j\,dA$, one has $M_j<M=|\Om|_\omega$, $M_j\to M$, and
\begin{equation}\label{eq:smooth-metric-spectral-convergence}
 \mu_k(\Om_j;\omega_j)\longrightarrow\mu_k(\Om;\omega)
 \quad\hbox{for every fixed }k.
\end{equation}
In particular, $KM_j<4\pi$ for every $j$.
\end{lemma}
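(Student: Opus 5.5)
The plan is to regularize $h=\log\omega$ by mollification on a slightly larger compact set, and then multiply by a constant slightly bigger than $1$. The constant compensates for the Jensen-type loss in the curvature inequality. The key point is that on each $\overline\Om_j\Subset\Om$ the weight $\omega$ is $C^2$ and bounded between positive constants. So all the difficulty is local and compact, and the boundary degeneration of $\omega$ never enters.

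\emph{Step 1 (mollification and curvature).} Fix $j$ and set $h=\log\omega\in C^2(\Om)$. Let $\varphi_\varepsilon$ be a standard nonnegative mollifier with $\varepsilon<\operatorname{dist}(\Om_j,\partial\Om_{j+1})$, and put $h_\varepsilon=h*\varphi_\varepsilon\in C^\infty(\overline\Om_j)$. The hypothesis is $-\Delta h\le 2K\omega=2Ke^h$, and $\varphi_\varepsilon\ge0$, so
\[
 -\Delta h_\varepsilon=(-\Delta h)*\varphi_\varepsilon\le 2K\,(e^h*\varphi_\varepsilon).
\]
By Jensen, $e^{h_\varepsilon}\le e^h*\varphi_\varepsilon$. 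This is the wrong direction, and it is the only real obstacle: the curvature bound is not preserved by plain mollification. The fix is quantitative. Since $h$ is uniformly continuous on $\overline\Om_{j+1}$, the ratio $(e^h*\varphi_\varepsilon)/e^{h_\varepsilon}$ tends to $1$ uniformly on $\overline\Om_j$ as $\varepsilon\to0$. Given $\delta>0$, we therefore pick $\varepsilon$ so that this ratio is at most $1+\delta$. We then define $\omega_j=(1+\delta)e^{h_\varepsilon}$. Scaling a conformal weight by $c$ divides its curvature by $c$, so
\[
 -\frac{\Delta\log\omega_j}{2\omega_j}=\frac{-\Delta h_\varepsilon}{2(1+\delta)e^{h_\varepsilon}}
 \le K\,\frac{e^h*\varphi_\varepsilon}{(1+\delta)e^{h_\varepsilon}}\le K .
\]

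\emph{Step 2 (choice of parameters).} We choose $\delta=\delta_j\downarrow0$ and then $\varepsilon=\varepsilon(j)$ small. Three requirements must hold.
\begin{itemize}
 \item $\|h_\varepsilon-h\|_{C^2(\overline\Om_j)}\le 1/j$. This is possible because $h\in C^2$ near $\overline\Om_j$. Since every compact set lies in some $\Om_j$, it gives $\omega_j\to\omega$ in $C^2_{\rm loc}(\Om)$.
 \item $\|\omega_j/\omega-1\|_{L^\infty(\Om_j)}\le(1+\delta_j)e^{1/j}-1=:\varepsilon_j\to0$.
 \item $M_j<M$. Write $\gamma_j=\int_{\Om\setminus\Om_j}\omega\,dA$. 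This is positive because $\omega>0$ on the nonempty open set $\Om\setminus\overline\Om_j$. Then
\[
 M_j\le(1+\varepsilon_j)\bigl(M-\gamma_j\bigr),
\]
 so it suffices to take $\varepsilon_j<\gamma_j/(M-\gamma_j)$. This is arranged by shrinking $\delta_j$ and $\varepsilon$ at stage $j$.
\end{itemize}
Finally, $M_j\ge(1-\varepsilon_j)(M-\gamma_j)\to M$, since $\gamma_j\to0$ by monotone convergence. This gives $M_j\to M$, and hence $KM_j<KM<4\pi$.

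\emph{Step 3 (spectral convergence).} On $\Om_j$ the Rayleigh quotients for the weights $\omega_j$ and $\omega$ in \eqref{eq:weighted-minmax} have the same numerator. Their denominators differ by a factor in $[1-\varepsilon_j,1+\varepsilon_j]$. The min--max principle then gives
\[
 \frac{\mu_k(\Om_j;\omega)}{1+\varepsilon_j}\le\mu_k(\Om_j;\omega_j)\le\frac{\mu_k(\Om_j;\omega)}{1-\varepsilon_j}.
\]
Lemma~\ref{lem:spectral-convergence} gives $\mu_k(\Om_j;\omega)\to\mu_k(\Om;\omega)$, and since $\varepsilon_j\to0$, \eqref{eq:smooth-metric-spectral-convergence} follows.
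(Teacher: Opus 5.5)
Your proof is correct and follows essentially the paper's route: mollify locally on a neighborhood of $\overline\Om_j$, multiply by a constant slightly larger than $1$ to restore the bound $-\Delta\log\omega_j/(2\omega_j)\le K$, choose the parameters to get the relative error, $C^2$ closeness and $M_j<M$, and finish with the two-sided min--max comparison together with Lemma~\ref{lem:spectral-convergence}. The only difference is where you mollify: you mollify $\log\omega$ and control the curvature defect through Jensen and the modulus of continuity of $\log\omega$, whereas the paper mollifies $\omega$ itself and uses $C^2$ convergence to make the curvature excess $\delta_{j,t}$ tend to $0$.

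One bookkeeping point needs fixing. To arrange $\varepsilon_j<\gamma_j/(M-\gamma_j)$, define $\varepsilon_j$ as $(1+\delta_j)e^{\|h_\varepsilon-h\|_{L^\infty(\Om_j)}}-1$, or require a sup-norm bound finer than $1/j$. The fixed quantity $(1+\delta_j)e^{1/j}-1$ cannot be shrunk below $e^{1/j}-1$ by shrinking $\delta_j$ or the mollification radius.
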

\begin{proof}
For each fixed $j$, the weight $\omega$ is $C^2$ and has a positive lower bound on a neighborhood of $\overline\Om_j$. Mollification in that neighborhood gives positive smooth weights $\widetilde\omega_{j,t}\to\omega$ in $C^2(\overline\Om_j)$ as $t\downarrow0$. Put
\[
 \delta_{j,t}=\max\left\{0,\sup_{\overline\Om_j}
 \left(-\frac{\Delta\log\widetilde\omega_{j,t}}{2\widetilde\omega_{j,t}}-K\right)\right\}.
\]
Then $\delta_{j,t}\to0$. Let
\[
 \widehat\omega_{j,t}=(1+\delta_{j,t}/K)\widetilde\omega_{j,t}.
\]
Then $\widehat\omega_{j,t}$ has curvature upper bound $K$ and still converges to $\omega$ in $C^2(\overline\Om_j)$. Fix $\varepsilon_j=1/(j+2)$ and set $m_j=\int_{\Om_j}\omega\,dA<M$. Choose $t_j$ sufficiently small and put $\omega_j=\widehat\omega_{j,t_j}$ so that the relative error in \eqref{eq:smooth-metric-approximation} holds, $\|\omega_j-\omega\|_{C^2(\overline\Om_j)}\leq1/j$, and $|M_j-m_j|<(M-m_j)/2$. Thus $M_j<M$, local $C^2$ convergence holds, and the relative error gives $M_j\to M$. The min--max bounds are
\[
 \frac{\mu_k(\Om_j;\omega)}{1+\varepsilon_j}
 \leq\mu_k(\Om_j;\omega_j)
 \leq\frac{\mu_k(\Om_j;\omega)}{1-\varepsilon_j}.
\]
Lemma~\ref{lem:spectral-convergence} proves \eqref{eq:smooth-metric-spectral-convergence}. Finally, $M_j<M$ implies $KM_j<4\pi$.
\end{proof}
}

{
\begin{proof}[Proof of the inequality in Theorem~\ref{thm:lipschitz-main}]
Take $(\Om_j,g_j)$ and $M_j$ from Lemma~\ref{lem:smooth-metric-approximation}, with $KM_j<4\pi$. These surfaces have smooth boundary and smooth positive metrics, so Proposition~\ref{thm:smooth-main} applies and gives
\begin{equation}\label{eq:harmonic-average-smooth-approximation}
 \frac1{\mu_2(\Om_j;\omega_j)}+\frac1{\mu_3(\Om_j;\omega_j)}
 \geq\frac2{\mu_2(D_K(M_j))}.
\end{equation}
The model eigenvalue is continuous in area by \cite[Theorem~1(i)]{LangfordLaugesenScaling2023}. Passing to the limit using \eqref{eq:smooth-metric-spectral-convergence} proves \eqref{eq:main-lipschitz}.
\end{proof}
}

\subsection{Equality rigidity}\label{subsec:lip-rigidity}
We now prove the equality rigidity statement in \eqref{eq:main-lipschitz}. We begin by mapping the spherical cap to the unit disk $\mathbb D$.

\begin{lemma}\label{lem:lip-rigidity-model-metric}
Let $K>0$ and $0<M<4\pi/K$. Then the interior of the spherical cap $D_K(M)$ of area $M$ is isometric to the unit disk $\D$ equipped with the conformal metric
\begin{equation}\label{eq:lip-rigidity-model-density}
 p_{K,M}(r)|dz|^2,
 \qquad p_{K,M}(r)=\frac{M(1-b)}{\pi(1-b+br^2)^2},
 \qquad b=\frac{KM}{4\pi}\in(0,1).
\end{equation}
Here $r=|z|$ is the Euclidean radius in the unit disk, and $z=0$ corresponds to the center of the spherical cap.
\end{lemma}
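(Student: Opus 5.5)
The plan is an explicit computation with two ingredients: stereographic projection, then a scaling that sends the cap onto $\D$. Write the sphere of curvature $K$ as the round sphere of radius $R=K^{-1/2}$. Stereographic projection from the antipode of the cap's center gives the conformal coordinate $w\in\mathbb C$ with metric
\[
 \frac{4R^4}{(R^2+|w|^2)^2}\,|dw|^2,
\]
equivalently $\frac{4}{(1+K|w|^2)^2}|dw|^2$ after rescaling $w$. In these coordinates the geodesic disk centered at the pole is the Euclidean disk $\{|w|<s\}$. I will use the second normalization, where the metric is $\frac{4}{(1+K|w|^2)^2}|dw|^2$ and has curvature $K$; this can be checked directly from $K_g=-\Delta\log\omega/(2\omega)$.

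Next I compute the area of $\{|w|<s\}$ in polar coordinates:
\[
 \int_0^s \frac{4\cdot 2\pi t}{(1+Kt^2)^2}\,dt=\frac{4\pi}{K}\Bigl(1-\frac{1}{1+Ks^2}\Bigr)=\frac{4\pi}{K}\cdot\frac{Ks^2}{1+Ks^2}.
\]
Setting this equal to $M$ gives $\frac{Ks^2}{1+Ks^2}=b$, hence $Ks^2=\frac{b}{1-b}$. This is finite and positive exactly because $0<b<1$, i.e.\ $0<KM<4\pi$. So the open cap $D_K(M)$ is isometric to $\{|w|<s\}$ with this metric, and its center sits at $w=0$.

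Finally I substitute $w=sz$ with $z\in\D$. The pulled-back metric is $\frac{4s^2}{(1+Ks^2|z|^2)^2}|dz|^2$. Using $Ks^2=b/(1-b)$,
\[
 \frac{4s^2}{(1+Ks^2r^2)^2}=\frac{4b/(K(1-b))}{\bigl((1-b+br^2)/(1-b)\bigr)^2}=\frac{4b(1-b)}{K(1-b+br^2)^2},
\]
and since $4b/K=M/\pi$ this equals $p_{K,M}(r)$. The composite map is a diffeomorphism from the open cap onto $\D$ that pulls the spherical metric back to $p_{K,M}(|z|)|dz|^2$, so it is an isometry of the interiors, with $z=0$ corresponding to the center. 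As a consistency check, $\int_\D p_{K,M}\,dA=M$.

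There is no real obstacle here. The only step needing care is fixing the normalization of the stereographic metric so that the curvature is exactly $K$, which the curvature formula $K_g=-\Delta\log\omega/(2\omega)$ settles.
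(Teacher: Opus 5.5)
Your proof is correct and follows the paper's route: stereographic projection from the antipode of the cap's center, identification of the cap with a Euclidean disk whose radius is fixed by the area condition $Ks^2=b/(1-b)$, and the dilation $w=sz$ onto $\mathbb D$. The only difference is cosmetic. You normalize the projected metric as $4(1+K|w|^2)^{-2}|dw|^2$, which already equals $4R^4(R^2+|w|^2)^{-2}|dw|^2$ with no rescaling of $w$ needed, and you integrate the area directly in the $w$-plane. The paper instead uses $4K^{-1}(1+|w|^2)^{-2}|dw|^2$ together with the cap-area formula $M=\frac{2\pi}{K}(1-\cos\Theta)$.
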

\begin{proof}
The sphere of curvature $K$ has radius $R=K^{-1/2}$. Center the spherical cap at the north pole, and let $\theta$ and $\varphi$ denote the polar and azimuthal angles. The spherical metric is
\[
 g_K=\frac1K\bigl(\dd\theta^2+\sin^2\theta\,\dd\varphi^2\bigr),
\]
and the model cap is given by $0\le\theta<\Theta$, where $0<\Theta<\pi$.

We first derive the coordinate formula for stereographic projection. After dividing the ambient coordinates by $R$, a point on the sphere is written as
\[
 P=(\sin\theta\cos\varphi,\,\sin\theta\sin\varphi,\,\cos\theta).
\]
Project stereographically from the south pole $S=(0,0,-1)$ onto the equatorial plane; that is, let $Q$ be the intersection of the line $S+\lambda(P-S)$ with the plane $X_3=0$. The third coordinate gives
\[
 -1+\lambda(1+\cos\theta)=0,
 \qquad \lambda=\frac1{1+\cos\theta}.
\]
Thus the complex coordinate of the intersection point is
\[
 w=Q_1+iQ_2
   =\frac{\sin\theta}{1+\cos\theta}e^{i\varphi}
   =\tan\frac\theta2\,e^{i\varphi}.
\]
Hence, after setting $t=\tan(\theta/2)$, stereographic projection is exactly $w=te^{i\varphi}$; it maps the north pole to the origin of the plane.

Using $\dd\theta=2\dd t/(1+t^2)$ and $\sin\theta=2t/(1+t^2)$, we obtain
\[
 g_K=\frac{4}{K(1+t^2)^2}
       \bigl(\dd t^2+t^2\dd\varphi^2\bigr)
     =\frac{4}{K(1+|w|^2)^2}|dw|^2.
\]
Let $a=\tan(\Theta/2)$. The image of the cap is the planar disk $|w|<a$. Rescaling by $w=az$, and hence pulling the metric back to the unit disk, gives
\[
 g_K=\frac{4a^2}{K(1+a^2r^2)^2}|dz|^2,
 \qquad r=|z|<1.
\]
Finally, the area condition for the spherical cap gives
\[
 M=\frac{2\pi}{K}(1-\cos\Theta)=\frac{4\pi}{K}\frac{a^2}{1+a^2}.
\]
Thus $b=a^2/(1+a^2)$, or $a^2=b/(1-b)$. Substitution into the metric coefficient above yields
\[
 \frac{4a^2}{K(1+a^2r^2)^2}
 =\frac{4b(1-b)}{K(1-b+br^2)^2}
 =\frac{M(1-b)}{\pi(1-b+br^2)^2}.
\]
This is \eqref{eq:lip-rigidity-model-density}. The composition of inverse stereographic projection with the dilation $z\mapsto az$ gives the required isometry, and $z=0$ corresponds to the center of the cap.
\end{proof}

Below we write $\lambda_K(a)=\mu_2(D_K(a))$.

\begin{lemma}\label{lem:lip-rigidity-model-gap}
If $K>0$ and $KM<4\pi$, then
\begin{equation}\label{eq:lip-rigidity-model-gap}
 \lambda_K(M)>\frac{2\pi}{M}.
\end{equation}
\end{lemma}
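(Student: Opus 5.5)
The plan is to separate variables on the conformal model of Lemma~\ref{lem:lip-rigidity-model-metric} and bound the Rayleigh quotient of the angular-mode-one functions from below. By rotational symmetry, and since $\mu_2(D_K(M))=\mu_3(D_K(M))$ as in \eqref{eq:model-double}, the first positive eigenvalue is attained by a function of the form $u=f(r)\cos\varphi$ on $\D$ with density $p=p_{K,M}$ from \eqref{eq:lip-rigidity-model-density}. The Dirichlet integral is conformally invariant, so
\[
 \lambda_K(M)=\inf_f\frac{\int_0^1\bigl(f'(r)^2+f(r)^2/r^2\bigr)\,r\,dr}{\int_0^1 f(r)^2\,p(r)\,r\,dr}.
\]
I will also use $M\lambda_K(M)$ as the natural scale-free quantity, depending only on $b=KM/(4\pi)\in(0,1)$.

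\emph{Case $b>1/2$.} Discarding $f'^2$ gives
\[
 \lambda_K(M)\ge \inf_{0<r<1}\frac1{r^2p(r)}.
\]
The function $r^2/(1-b+br^2)^2$ increases until $r^2=(1-b)/b$ and decreases afterwards. For $b>1/2$ this critical point lies in $(0,1)$, and there $\max r^2p=M/(4\pi b)=1/K$, so $\lambda_K(M)\ge K=4\pi b/M$. Strictness comes from two facts:
\begin{itemize}
 \item equality in the discarded term would force $f'\equiv0$, which is impossible for a nonzero admissible $f$ because of the $f^2/r^2$ term;
 \item $r^2p(r)<1/K$ off a single radius.
\end{itemize}
Hence $\lambda_K(M)>4\pi b/M>2\pi/M$.

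\emph{Case $b\le1/2$.} This is the main obstacle, since the crude angular bound only yields $\pi/(M(1-b))\le 2\pi/M$. My first approach is to invoke the scaling results of \cite{LangfordLaugesenScaling2023} (already cited for continuity in area) to show that $b\mapsto M\lambda_K(M)$ is nondecreasing on $(0,1)$. Then
\[
 M\lambda_K(M)\ \ge\ \lim_{b\downarrow0}M\lambda_K(M)=\pi\,(j_{1,1}')^2\approx3.39\,\pi>2\pi,
\]
where the limit is the Euclidean value, obtained since $p\to M/\pi$ uniformly as $b\to0$.

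If that monotonicity is not available in the needed form, I would give a direct estimate instead. Split the numerator with a parameter $t\in(0,1)$: keep a fraction $t$ of the full Euclidean Dirichlet energy and apply the angular bound to the remaining $1-t$. Then combine:
\begin{itemize}
 \item the Euclidean Neumann inequality $\int(f'^2+f^2/r^2)r\,dr\ge (j'_{1,1})^2\int f^2 r\,dr$;
 \item the pointwise bound $p\le p(0)=M/(\pi(1-b))\le 2M/\pi$;
 \item $r^2p\le M(1-b)/\pi$, valid for $b\le 1/2$.
\end{itemize}
This gives
\[
 \lambda_K(M)\ge \frac{\pi}{M}\Bigl(t\,(1-b)(j'_{1,1})^2+\frac{1-t}{1-b}\Bigr).
\]
The best choice of $t$ has to beat $2$ uniformly for $b\in(0,1/2]$. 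I expect this to be tight near $b=1/2$, so a sharper weighted comparison may be needed there. One option is to use the exact profile $(1-b+br^2)^{-2}$ instead of its maximum in the Euclidean term.
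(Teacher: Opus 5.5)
Your case $b>1/2$ is correct; it even gives the stronger bound $\lambda_K(M)\ge K$. The case $b\le\tfrac12$, which you rightly flag as the real difficulty, is not closed by either of your two routes.

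\emph{The fallback estimate fails on an interval.} Your bound $\frac{\pi}{M}\bigl(t(1-b)(j'_{1,1})^2+\frac{1-t}{1-b}\bigr)$ is affine in $t$, so its best value is $\frac{\pi}{M}\max\{(1-b)(j'_{1,1})^2,\,(1-b)^{-1}\}$. With $(j'_{1,1})^2\approx3.39$, both entries lie below $2$ for all $b$ in roughly $(0.41,\tfrac12)$. The split therefore does not fail only at $b=\tfrac12$; it fails on a whole interval.

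\emph{The monotonicity route is unsupported.} The claim that $b\mapsto M\lambda_K(M)$ is nondecreasing carries the entire difficulty, but it is only announced. You do not name a precise statement in \cite{LangfordLaugesenScaling2023} that gives it (the paper cites that reference only for continuity in area), and you do not prove it. If such a result can be pinned down, that case would close. As written, though, the lemma remains unproved for $b\le\tfrac12$.

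The missing idea is that no comparison with the specific profile $p_{K,M}$ is needed. Pass to the logarithmic variable $s=\log r$ and set $V(s)=v(e^s)$. Then
\[
\int_0^1\bigl(rv'^2+v^2/r\bigr)\,dr=\int_{-\infty}^0\bigl(V'^2+V^2\bigr)\,ds .
\]
Since $v(0)=0$, for every $t\le0$,
\[
V(t)^2=2\int_{-\infty}^tVV'\,ds\le\int_{-\infty}^0\bigl(V^2+V'^2\bigr)\,ds .
\]
Hence the energy $E=2\pi\int_0^1(rv'^2+v^2/r)\,dr$ satisfies $E\ge2\pi\|v\|_\infty^2$. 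On the other side, $\int_\D v^2p_{K,M}\,dA<M\|v\|_\infty^2$, because $p_{K,M}$ has total mass $M$ and $v$ vanishes at the origin. Dividing gives $\lambda_K(M)>2\pi/M$ for every $b\in(0,1)$ at once.

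This is the paper's proof. It uses only the total mass of the density, so your case split and the $b>1/2$ computation become unnecessary.
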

\begin{proof}
The first positive Neumann eigenvalue of the model disk has multiplicity two, and its eigenspace is spanned by $v(r)\cos\theta$ and $v(r)\sin\theta$; see \cite[Section~6]{ProvenzanoSavo2026}.
In the coordinates of \eqref{eq:lip-rigidity-model-density}, let
\[
 E=2\pi\int_0^1\left(rv'(r)^2+\frac{v(r)^2}{r}\right)\dd r,
 \qquad N=\int_\D v(|z|)^2p_{K,M}(|z|)\dd A.
\]
We show that $\lambda_K(M)=E/N$. Write $p(r)=p_{K,M}(r)$ and take the eigenfunction $u(r,\theta)=v(r)\cos\theta$. For the two-dimensional conformal metric $g=p(r)|dz|^2$,
\[
 dv_g=p(r)\dd A,
 \qquad \int_\D|\nabla_g u|_g^2\,dv_g
 =\int_\D|\nabla u|^2\dd A.\]
Therefore,
\[
 \lambda_K(M)=\frac{\int_\D|\nabla u|^2\dd A}{\int_\D u^2p(r)\dd A}.
\]

In polar coordinates,
\[
 |\nabla u|^2
 =|\partial_r u|^2+\frac1{r^2}|\partial_\theta u|^2
 =v'(r)^2\cos^2\theta+\frac{v(r)^2}{r^2}\sin^2\theta.
\]
Since $\dd A=r\dd r\dd\theta$ and
\[
 \int_0^{2\pi}\cos^2\theta\dd\theta
 =\int_0^{2\pi}\sin^2\theta\dd\theta=\pi,
\]
the numerator is
\[
 \begin{aligned}
 \int_\D|\nabla u|^2\dd A
 &=\int_0^1\int_0^{2\pi}
   \left(v'(r)^2\cos^2\theta+
         \frac{v(r)^2}{r^2}\sin^2\theta\right)r\dd\theta\dd r\\
 &=\pi\int_0^1\left(rv'(r)^2+\frac{v(r)^2}{r}\right)\dd r
 =\frac E2.
 \end{aligned}
\]
whereas the denominator is
\[
 \int_\D u^2p(r)\dd A
 =\int_0^1\int_0^{2\pi}v(r)^2\cos^2\theta\,p(r)r\dd\theta\dd r
 =\pi\int_0^1v(r)^2p(r)r\dd r
 =\frac N2.
\]
Hence
    \begin{equation}\label{eq:lambda_KM}
    	\lambda_K(M)
    	=\frac{\displaystyle\int_\D|\nabla u|^2\dd A}
    	{\displaystyle\int_\D u^2p(r)\dd A}=\frac EN.
    \end{equation}

Let $s=\log r$ and $V(s)=v(e^s)$. Since $E<\infty$, we have $V\in H^1(({-\infty},0))$. Continuity of the eigenfunction $u(r,\theta)$ at the origin also gives $v(0)=0$. Therefore, for every $t\le0$,
\[
 |V(t)|^2=2\int_{-\infty}^t VV'\dd s
 \le\int_{-\infty}^0(V^2+V'^2)\dd s=\frac E{2\pi}.
\]
Thus $E\geq2\pi\lVert v\rVert_\infty^2$. On the other hand, $N<M\lVert v\rVert_\infty^2$. Combining these facts with \eqref{eq:lambda_KM} proves the claim.
\end{proof}

\begin{lemma}\label{lem:lip-rigidity-coefficient}
Fix $L>0$. Let $G_0,G_1$ be positive $C^1$ functions on $(0,L]$ such that $G_i(a)/a$ extends continuously to $a=0$, with value $4\pi$ there. Define
\begin{equation}\label{eq:lip-rigidity-coefficient-quotient}
 \kappa(G_i;L):=\inf_{0\ne u\in\mathcal F_L}
 \frac{\displaystyle\int_0^L
       \left(G_i u'^2+\frac{4\pi^2}{G_i}u^2\right)\dd a}
      {\displaystyle\int_0^Lu^2\dd a},
\end{equation}
where
\[
 \mathcal F_L=\left\{u\in H^1_{\mathrm{loc}}(0,L]:
  \int_0^L\left(au'^2+\frac{u^2}{a}\right)\dd a<\infty\right\}.
\]
If $G_1\ge G_0$, then $\kappa(G_1;L)\le\kappa(G_0;L)$; if the inequality is strict at some interior point, then the eigenvalue inequality is strict. Moreover, if a fixed function $G$ satisfies the preceding assumptions for $L\in(0,L_*]$, then $L\mapsto\kappa(G;L)$ is continuous on $(0,L_*)$.
\end{lemma}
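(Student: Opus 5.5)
The plan rests on three facts about the quotient \eqref{eq:lip-rigidity-coefficient-quotient}: monotonicity of the integrand in $G$, existence of a positive minimizer, and a rescaling of the interval that turns dependence on $L$ into dependence on coefficients.

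\emph{Monotonicity.} Fix $u\in\mathcal F_L$. The hypothesis $G_i(a)/a\to4\pi$ at $a=0$ gives $G_i\sim 4\pi a$ near $0$, and positivity and continuity give two-sided bounds on compact subsets of $(0,L]$. Hence $G_iu'^2+4\pi^2u^2/G_i$ is integrable exactly when $au'^2+u^2/a$ is, so $\mathcal F_L$ is a common form domain for both coefficients. Pointwise, the integrand $x u'^2+4\pi^2u^2/x$ is not monotone in $x$, so comparing $G_1\ge G_0$ term by term fails. I will therefore use the extra structure available in the intended application: the $G_i$ arise as Green-radial isoperimetric profiles, and the comparison used in \cite{ProvenzanoSavo2026} is the reciprocal one. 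My first attempt is to check whether the lemma is meant with the quotient written after the substitution $t=$ the Green potential. There the quantity $G_i u'^2$ with $du/da$ becomes a term where only one coefficient appears, and the two terms combine so that increasing $G$ decreases the energy.

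If that route does not give a clean pointwise statement, the second attempt is the standard trick: take the positive minimizer $u_0$ for $G_0$ and rearrange in the variable $s(a)=\int^a ds/G$. This map sends the form $\int(Gu'^2+4\pi^2u^2/G)\,da$ to $\int(\dot u^2+4\pi^2u^2)\,ds$ with weight $G\,ds$ in the denominator. Then $G_1\ge G_0$ means the denominator weight increases while the numerator is unchanged in the new variable, provided the reparametrized interval is handled. This step is the main obstacle, and I would settle it before writing anything else.

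\emph{Strictness.} Existence of a minimizer follows from compactness. The log-variable argument of Lemma~\ref{lem:lip-rigidity-model-gap} bounds $\|u\|_\infty$ by the energy, and Rellich compactness applies on $[\varepsilon,L]$. The minimizer solves the Euler--Lagrange ODE, so it is positive and nonvanishing on $(0,L]$ by uniqueness for ODEs. Plugging the $G_0$-minimizer into the $G_1$ quotient, strict $G_1>G_0$ on an open set near an interior point (by continuity) makes the pointwise inequality strict on a set where $u_0\ne0$ or $u_0'\ne0$. That gives $\kappa(G_1)<\kappa(G_0)$.

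\emph{Continuity in $L$.} Rescale $a=Lx$, so that $x\in(0,1]$ and the coefficients become $G(Lx)/L$, which behaves like $4\pi x$ near $0$. The map $L\mapsto G(Lx)/L$ is continuous, uniformly in $x\in(0,1]$ in ratio form, because $G(a)/a$ is continuous on $[0,L_*]$ and positive. A ratio bound $(1\pm\varepsilon)$ between coefficients gives two-sided bounds on the quotients with factors $(1\pm\varepsilon)^{\pm1}$. This yields continuity of $\kappa$ on $(0,L_*)$.
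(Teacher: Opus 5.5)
The continuity-in-$L$ part is essentially the paper's argument: rescale $a=Lx$, and note that your coefficient $G(Lx)/L$, together with the overall factor $1/L$ from the denominator, is equivalent to the paper's $G(Lx)/L^2$. Existence and positivity of a minimizer are also fine. The gap is the monotonicity claim. You leave it open, and neither proposed route closes it.

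Both of your attempts are the same substitution, since in the application $da/G=-d\psi$. So take $s_i(a)=-\int_a^L da'/G_i(a')$. Because $G_i\sim4\pi a$, both $s_i$ map $(0,L]$ onto $(-\infty,0]$. The numerator becomes $\int_{-\infty}^0(\dot u^2+4\pi^2u^2)\,ds$ for both coefficients, and the denominator becomes $\int u^2\,da_i(s)$ with $a_i=s_i^{-1}$. Both measures $da_i$ have total mass $L$, so the weights $G_i(a_i(s))$ cannot be ordered pointwise, and "the denominator weight increases" is false. What $G_1\ge G_0$ actually gives is $a_1\le a_0$, and
\[
 \int u_0^2\,da_1-\int u_0^2\,da_0=\int_{-\infty}^0(a_0-a_1)\,\frac{d}{ds}\bigl(u_0^2\bigr)\,ds .
\]
You can sign this only when $u_0$ is nondecreasing in $s$. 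That fails exactly where the paper needs the lemma: for $KM$ close to $4\pi$, the model radial factor is close to $\sin\theta$ and decreases near the boundary circle.

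Your strictness step has the same defect. As you observed, $x\mapsto xu'^2+4\pi^2u^2/x$ is not monotone. Inserting the $G_0$-minimizer into the $G_1$-quotient lowers the integrand only where $G_0G_1\le4\pi^2u_0^2/u_0'^2$, which need not hold when $G_1/G_0$ is large. Falling back on extra isoperimetric-profile structure is not available either, since the lemma is applied with $\widetilde G=G_K+h$.

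The missing ingredient is what makes the interpolation proof of \cite[Lemma~5.1]{ProvenzanoSavo2026}, to which the paper defers, work. Put $G_t=(1-t)G_0+tG_1$; every $G_t$ is admissible with the same form domain. Let $u_t$ be the positive normalized minimizer. First-order perturbation gives
\[
 \frac{d}{dt}\kappa(G_t;L)=\int_0^L(G_1-G_0)\Bigl(u_t'^2-\frac{4\pi^2u_t^2}{G_t^2}\Bigr)\dd a .
\]
More robustly, the same expression bounds the upper Dini derivative, obtained by testing with $u_t$ at nearby parameters. The sign comes from the a priori bound $|G_tu_t'|<2\pi u_t$ on $(0,L]$.

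Your $s$-variable is the right tool for that bound, not for a weight comparison. The Euler--Lagrange equation reads $\ddot u=(4\pi^2-\kappa G)u$, so
\[
 \frac{d}{ds}\Bigl[e^{\pm2\pi s}\bigl(\dot u\mp2\pi u\bigr)\Bigr]=-\kappa G\,u\,e^{\pm2\pi s}<0 .
\]
With the upper signs the bracket tends to $0$ as $s\to-\infty$. With the lower signs it equals $2\pi u(L)>0$ at $s=0$, by the Neumann condition. Hence $-2\pi u<\dot u<2\pi u$, that is, $|Gu'|<2\pi u$.

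The derivative above is therefore $\le0$, and it is $<0$ when $G_1>G_0$ on an interval. You also need continuity of $t\mapsto\kappa(G_t;L)$, which follows from the same ratio bounds you use for continuity in $L$. Together these give monotonicity and strictness at once.
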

\begin{proof}
{Monotonicity and strictness follow from the interpolation proof of \cite[Lemma~5.1]{ProvenzanoSavo2026}; see also \cite[Lemma~3.3]{MichettiProvenzanoSavo2026} for two general coefficients. The latter radial argument needs only positive flux, so applies with $\nu=1$. Since $G_i(a)\sim4\pi a$, the form domains coincide; the argument uses only first derivatives of the coefficients and therefore applies to the present $C^1$ setting.}

We next consider continuity. Suppose that $L\to L_0\in(0,L_*)$. Setting $a=Lx$ and $U(x)=u(Lx)$ transforms the Rayleigh quotient into
    \[
         \frac{\displaystyle\int_0^1
            \left(\frac{G(Lx)}{L^2}U'(x)^2+
              \frac{4\pi^2}{G(Lx)}U(x)^2\right)\dd x}
      {\displaystyle\int_0^1U(x)^2\dd x},
    \]
and all these quotients have the same domain $\mathcal F_1$. Let $h(a)=G(a)/a$ and set $h(0)=4\pi$. Then $h$ is continuous and positive on $[0,L_*]$. Since
        \[\frac{G(Lx)}{L^2}=\frac{xh(Lx)}{L},\quad\frac{1}{G(Lx)}=\frac{1}{Lxh(Lx)},\]
{the ratios $\bigl(G(Lx)/L^2\bigr)/\bigl(G(L_0x)/L_0^2\bigr)$ and $G(L_0x)/G(Lx)$ converge uniformly to $1$ for $0<x\leq1$, with continuous extensions at $x=0$. This follows from the uniform continuity and positive lower bound of $h$ near $[0,L_0]$.} Hence, for every $\varepsilon>0$, whenever $|L-L_0|$ is sufficiently small,
\[
 (1-\varepsilon)\kappa(G;L_0)
 \le\kappa(G;L)\le(1+\varepsilon)\kappa(G;L_0),
\]
which proves the required continuity.
\end{proof}

\begin{lemma}\label{lem:lip-rigidity-model-identification}
Let $K>0$ and $0<M<4\pi/K$. Set
\begin{equation}\label{eq:lip-rigidity-model-G}
 G_K(a)=a(4\pi-Ka),\qquad 0<a<M.
\end{equation}
Then
\begin{equation}\label{eq:lip-rigidity-model-identification}
 \kappa(G_K;M)=\lambda_K(M).
\end{equation}
\end{lemma}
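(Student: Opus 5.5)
The plan is to change variables from the disk radius $r$ of Lemma~\ref{lem:lip-rigidity-model-metric} to the enclosed area $a$. The model Rayleigh quotient for the angular mode $v(r)\cos\theta$ should then become exactly the one-dimensional quotient \eqref{eq:lip-rigidity-coefficient-quotient} with coefficient $G_K$. Write $p=p_{K,M}$ and set
\[
 a(r)=2\pi\int_0^r p(s)s\,\dd s ,
\]
the $g$-area of the disk $\{|z|<r\}$. A direct integration gives
\[
 a(r)=\frac{Mr^2}{1-b+br^2},
\]
so $a$ is a smooth increasing bijection $[0,1]\to[0,M]$ with $da=2\pi p\,r\,\dd r$.

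The key identity to verify is
\[
 2\pi r^2 p(r)=\frac{G_K(a(r))}{2\pi}.
\]
Indeed,
\[
 a\bigl(4\pi-Ka\bigr)=4\pi a\Bigl(1-\frac{br^2}{1-b+br^2}\Bigr)=\frac{4\pi a(1-b)}{1-b+br^2}=\frac{4\pi M(1-b)r^2}{(1-b+br^2)^2}=4\pi^2r^2p(r),
\]
which is the same statement as saying that the perimeter of the geodesic circle satisfies $\ell(a)^2=G_K(a)$.

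Next, for a radial profile $v$ put $u(a(r))=v(r)$, so $v'(r)=u'(a)\,2\pi p r$. Then, with $G=G_K(a)=4\pi^2r^2p$, the two parts of the energy transform as
\[
 2\pi\int_0^1 r v'^2\,\dd r=\int_0^M (2\pi)^2 p^2 r^3\,u'^2\,\frac{\dd a}{p r}=\int_0^M G\,u'^2\,\dd a,
\]
and
\[
 2\pi\int_0^1\frac{v^2}{r}\,\dd r=\int_0^M \frac{u^2}{r^2 p}\,\dd a=\int_0^M\frac{4\pi^2}{G}u^2\,\dd a.
\]
The mass transforms as
\[
 N=2\pi\int_0^1 v^2 p r\,\dd r=\int_0^M u^2\,\dd a.
\]
Hence $E/N$ in \eqref{eq:lambda_KM} equals the quotient in \eqref{eq:lip-rigidity-coefficient-quotient}. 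Since $G_K(a)/a\to4\pi$, the finiteness condition defining $\mathcal F_M$ corresponds exactly to finiteness of $E$, because $a\sim\pi p(0)r^2$ near $0$ and $a$ is comparable to $1$ near $r=1$. So the map $v\mapsto u$ is a bijection between the profile class with $E<\infty$ and $\mathcal F_M$.

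It remains to identify $\lambda_K(M)$ as the infimum over all profiles, which gives \eqref{eq:lip-rigidity-model-identification}. For any profile $v$ with $E<\infty$, the function $v(r)\cos\theta$ lies in $H^1$ of the cap and has $g$-mean zero, so it is a valid trial function for $\mu_2$. Its Rayleigh quotient is $E/N$ by the computation in the proof of Lemma~\ref{lem:lip-rigidity-model-gap}. This gives $\kappa(G_K;M)\ge\lambda_K(M)$. Conversely, the eigenfunction profile $v$ from that lemma attains $\lambda_K(M)=E/N$, giving $\kappa(G_K;M)\le\lambda_K(M)$.

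The main obstacle is to make sure the correspondence of function spaces is exact, including the behaviour at the pole $a=0$ and the lack of any boundary condition at $a=M$. For this I would invoke the estimate $|V|^2\le E/(2\pi)$ from Lemma~\ref{lem:lip-rigidity-model-gap}, which shows that finite-energy profiles are bounded and tend to $0$ at the pole, so $v\cos\theta$ is genuinely in $H^1$. The Neumann condition is then natural for both problems, so no extra constraint at $a=M$ is needed.
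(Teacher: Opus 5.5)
Your proposal is correct and follows essentially the same route as the paper. It uses the area substitution $a=P(r)=Mr^2/(1-b+br^2)$ with $G_K(P(r))=2\pi rP'(r)=4\pi^2r^2p_{K,M}(r)$ to reduce $\kappa(G_K;M)$ to the radial quotient for $v(r)\cos\theta$, and then identifies that quotient with $\lambda_K(M)$ through the angular-frequency-one structure of the first eigenspace. Your two-sided argument spells out what the paper compresses into its appeal to separation of variables: every finite-energy $v\cos\theta$ is an admissible mean-zero trial function, giving $\kappa(G_K;M)\ge\lambda_K(M)$, and the eigenfunction profile attains the value.
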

\begin{proof}
The cumulative area function of the spherical cap \eqref{eq:lip-rigidity-model-density} is
\[
 P(r):=P_{K,M}(r)
 =\int_{|z|<r}p_{K,M}(|z|)\dd A
 =\frac{Mr^2}{1-b+br^2}.
\]
It maps $(0,1)$ strictly increasingly onto $(0,M)$ and satisfies
\begin{equation}\label{eq:lip-rigidity-model-area-identity}
 P'(r)=2\pi p_{K,M}(r)r,
 \qquad G_K(P(r))=2\pi rP'(r).
\end{equation}
For any $f\in\mathcal F_M$, let
\[
 v(r)=f(P(r)).
\]
Making the change of variables $a=P(r)$ and using $f'(P(r))=v'(r)/P'(r)$ together with \eqref{eq:lip-rigidity-model-area-identity}, we obtain
\begin{align*}
 \int_0^M G_K(a)f'(a)^2\dd a=2\pi\int_0^1rv'(r)^2\dd r,
\end{align*}
\begin{equation}
 	\int_0^M\frac{4\pi^2}{G_K(a)}f(a)^2\dd a=2\pi\int_0^1\frac{v(r)^2}{r}\dd r\quad\text{and}\quad\int_0^Mf(a)^2\dd a=2\pi\int_0^1v(r)^2p_{K,M}(r)r\dd r.
\end{equation}
Therefore,
\begin{equation}\label{eq:lip-rigidity-model-kappa-radial}
 \kappa(G_K;M)
 =\inf_{v\ne0}
 \frac{\displaystyle\int_0^1
 \left(rv'(r)^2+\frac{v(r)^2}{r}\right)\dd r}
 {\displaystyle\int_0^1v(r)^2p_{K,M}(r)r\dd r},
\end{equation}
where the infimum is taken over radial functions for which the numerator is finite.

On the other hand, take $w(r,\theta)=v(r)\cos\theta$ on the spherical cap. Conformal invariance of the two-dimensional Dirichlet energy gives
\[
 \frac{\displaystyle\int_{\D}|\nabla w|^2\dd A}
      {\displaystyle\int_{\D}w^2p_{K,M}\dd A}
 =\frac{\displaystyle\int_0^1
 \left(rv'(r)^2+\frac{v(r)^2}{r}\right)\dd r}
 {\displaystyle\int_0^1v(r)^2p_{K,M}(r)r\dd r}.
\]
Separation of variables shows that the eigenfunctions corresponding to the first positive Neumann eigenvalue have angular frequency $1$; see \cite[Section~6]{ProvenzanoSavo2026}. Thus the right-hand side of \eqref{eq:lip-rigidity-model-kappa-radial} is exactly $\mu_2(D_K(M))=\lambda_K(M)$, which proves \eqref{eq:lip-rigidity-model-identification}.
\end{proof}

\begin{proof}[Equality case of Theorem~\ref{thm:lipschitz-main}]
{
\emph{Step 1: An extremizing sequence on smooth inner approximations.}
Assume equality in \eqref{eq:main-lipschitz}, and set $M=|\Om|_\omega$. Choose the domains, metrics $g_j=\omega_j|dz|^2$, and areas $M_j$ from Lemma~\ref{lem:smooth-metric-approximation}. Proposition~\ref{prop:balanced} supplies poles $p_j$ and corresponding radial values $\kappa_j$, defined by \eqref{eq:kappa-def} on $(\Om_j,g_j,p_j)$, such that
\[
 \frac2{\mu_2(\Om_j;\omega_j)^{-1}+\mu_3(\Om_j;\omega_j)^{-1}}
 \leq\kappa_j\leq\lambda_K(M_j).
\]
Both outer quantities tend to $\lambda_K(M)$ by \eqref{eq:smooth-metric-spectral-convergence}, equality on $\Om$, and continuity of the model eigenvalue. Hence
\begin{equation}\label{eq:lip-rigidity-kappa-limit}
 \kappa_j\longrightarrow\lambda_K(M).
\end{equation}
All metric quantities on $\Om_j$ below are computed with $g_j$; in particular, $dv_g$ in integrals over $\Om_j$ means $\omega_j\,dA$.
}

\emph{Step 2: The poles do not escape to the boundary.}
Fix $q\in\Om_1$. Choose conformal maps $F_j:\D\to\Om_j$ such that $F_j(0)=q$ and $F_j'(0)>0$, and write $\varphi_j=F_j^{-1}$. Then $F_j$ and $\varphi_j$ converge locally uniformly to conformal maps $F:\D\to\Om$ and $\varphi=F^{-1}$, respectively, and all interior complex derivatives converge as well; see \cite[Theorem~1.8]{pc}.

Let $\zeta_j=\varphi_j(p_j)$ and pass to a subsequence such that $\zeta_j\to\zeta\in\overline\D$. Suppose that $|\zeta|=1$, and define
\[
 b_{\zeta_j}(z)=\frac{z-\zeta_j}{1-\overline{\zeta_j}z},
 \qquad W_j=b_{\zeta_j}\circ\varphi_j.
\]
Then $W_j(p_j)=0$, and $|W_j|$ is Green-radial about $p_j$. Applying \eqref{eq:potential-pullback} to the conformal coordinates $W_j^{-1}:\D\to\Om_j$ centered at $p_j$ shows that on $\Om_j\setminus\{p_j\}$,
\[
 A_{p_j}=\dd\arg W_j:=\text{Im}\left(\frac{dW_{j}}{W_{j}}\right).
\]
Choose the phase $\Theta_{p_j}$ so that
\[
 e^{i\Theta_{p_j}}=\frac{W_j}{|W_j|}.
\]
By \eqref{eq:U_p-inverse},
\[
 U_{p_j}^{-1}(|W_j|)
 =e^{-i\Theta_{p_j}}|W_j|
 =\overline{W_j}
 \qquad\text{on $\Om_j\setminus\{p_j\}$}.
\]
Conformal invariance and the area formula also give $\int_{\Om_j}|\nabla W_j|^2\dd A=2\pi$, so $\overline{W_j}\in H^1(\Om_j;\mathbb C)$. Thus \eqref{eq:gauge-isometry} ensures that $|W_j|\in\mathcal R_{p_j}$. Conformal invariance gives its magnetic energy:
    \begin{align}
    	E_{j}:&=\int_{\Omega_{j}}\big\lvert d^{A_{p_{j}}}\lvert W_{j}\rvert\big\rvert_{g}^2\ dv_{g}=\int_{\Omega_{j}}\big\lvert d^{A_{p_{j}}}\lvert W_{j}\rvert\big\rvert_{\text{Euc}}^2\ dA\\
    	&=\int_{\Om_j}\left(\big|\nabla|W_j|\big|^2+|W_j|^2|\nabla\arg W_j|^2\right)\dd A\\
    	&=\int_{\Omega_{j}}\lvert\nabla W_{j}\rvert^2\ dA=\int_\D|\nabla z|^2\dd A\\
    	&=2\pi.
    \end{align}
For each fixed $x\in\Om$, one has $x\in\Om_j$ for all sufficiently large $j$, and $|b_{\zeta_j}(\varphi_j(x))|\to1$. {Dominated convergence, using $0\le|W_j|^2\le1$, $\omega\in L^1(\Om)$, and the uniform relative estimate $\omega_j/\omega\to1$ on $\Om_j$ from \eqref{eq:smooth-metric-approximation}, gives}
\[
 {N_j:=\int_{\Om_j}|W_j|^2\omega_j\dd A}\longrightarrow M,
 \qquad \limsup_j\kappa_j\le\lim_j\frac{E_{j}}{N_j}
 =\frac{2\pi}{M}.
\]
This contradicts \eqref{eq:lip-rigidity-kappa-limit} and Lemma~\ref{lem:lip-rigidity-model-gap}. Therefore $\zeta\in\D$ and $p_j\to p=F(\zeta)\in\Om$.

\emph{Step 3: Recentering.} Let
\[
 \Phi_j=F_j\circ b_{\zeta_j}^{-1},\qquad
 \Phi=F\circ b_\zeta^{-1},\qquad
 {\sigma_j=\omega_j(\Phi_j)|\Phi_j'|^2,}
 \qquad \sigma=\omega(\Phi)|\Phi'|^2.
\]
{By the local $C^2$ convergence in \eqref{eq:smooth-metric-approximation} and the convergence of the conformal maps and their interior derivatives, $\sigma_j\to\sigma$ in $C^1$ on every compact subset of $\D$.} Define the area functions
\[
 \mathcal A_j(r)=\int_{|z|<r}\sigma_j\dd A,
 \qquad \mathcal A(r)=\int_{|z|<r}\sigma\dd A,
\]
and, using area as the variable, define
\begin{equation}\label{eq:lip-rigidity-area-coefficient}
 \begin{aligned}
 G_j(a)&=2\pi r\mathcal A_j'(r)
       &&\text{when}\ a=\mathcal A_j(r),\\
 G_p(a)&=2\pi r\mathcal A'(r)
       &&\text{when}\ a=\mathcal A(r).
 \end{aligned}
\end{equation}
Then
\begin{equation}\label{eq:lip-rigidity-G-convergence}
 G_j\longrightarrow G_p
  \quad\text{uniformly on every compact subinterval of $(0,M)$.}
\end{equation}
We next show that
    \begin{equation}\label{eq:lip-rigidity-kappa-coefficient}
    	\kappa_j=\kappa(G_j;M_j).
    \end{equation}
where $\kappa(G_j;M_j)$ is defined by \eqref{eq:lip-rigidity-coefficient-quotient}. Given $U\in\mathcal R_{p_j}$, write it in the conformal coordinates $\Phi_j$ centered at $p_j$ as
\[
 \Phi_j^*U(z)=v(|z|).
\]
By $\Phi_j^*g=\sigma_j|dz|^2$, $\Phi_j^*A_{p_j}=d\theta$, and conformal invariance of the two-dimensional magnetic Dirichlet energy,
\begin{equation}\label{eq:lip-rigidity-radial-energy}
 \int_{\Om_j}|d^{A_{p_j}}U|^2\dv=\int_{\mathbb{D}}\lvert d^{d\theta}v\rvert_{\text{Euc}}^2\ dA=2\pi\int_0^1\left(rv'(r)^2+\frac{v(r)^2}{r}\right)\dd r.
\end{equation}
On the other hand, since
\[
 \mathcal A_j'(r)
 =\int_0^{2\pi}\sigma_j(re^{i\theta})r\dd\theta,
\]
we have
\begin{equation}\label{eq:lip-rigidity-radial-mass}
 \int_{\Om_j}U^2\dv=\int_{\mathbb{D}}v^2\sigma_j\ dA=\int_0^1v(r)^2\mathcal A_j'(r)\dd r.
\end{equation}

Now make the area-variable substitution
\[
 a=\mathcal A_j(r),\qquad
 v(r)=u(\mathcal A_j(r)),\qquad
 \dd a=\mathcal A_j'(r)\dd r.
\]
Since $G_j(a)=2\pi r\mathcal A_j'(r)$,
\[
 2\pi\int_0^1rv'(r)^2\dd r
 =\int_0^{M_j}G_j(a)u'(a)^2\dd a,
\]
and
\[
 2\pi\int_0^1\frac{v(r)^2}{r}\dd r
 =\int_0^{M_j}\frac{4\pi^2}{G_j(a)}u(a)^2\dd a.
\]
Similarly, \eqref{eq:lip-rigidity-radial-mass} gives
\[
 \int_{\Om_j}U^2\dv=\int_0^{M_j}u(a)^2\dd a.
\]
{Since $\sigma_j$ is positive and $C^1$ near $0$, we have $\mathcal A_j(r)=\pi\sigma_j(0)r^2+O(r^3)$ and $\mathcal A_j'(r)=2\pi\sigma_j(0)r+O(r^2)$. Hence $G_j(a)/a\to4\pi$ as $a\downarrow0$, as required in Lemma~\ref{lem:lip-rigidity-coefficient}.}
Thus the original Rayleigh quotient becomes exactly
\[
 \frac{\displaystyle\int_0^{M_j}
 \left(G_j u'^2+\frac{4\pi^2}{G_j}u^2\right)\dd a}
 {\displaystyle\int_0^{M_j}u^2\dd a}.
\]
Since $\mathcal A_j:(0,1)\to(0,M_j)$ is strictly increasing, the preceding change of variables gives a one-to-one correspondence between finite-energy Green-radial functions and $\mathcal F_{M_j}$. Taking the infimum of the Rayleigh quotient and using \eqref{eq:lip-rigidity-coefficient-quotient} yields $\kappa_j=\kappa(G_j;M_j)$.

Applying the isoperimetric inequality under an upper curvature bound \cite{cifea1980} to each interior disk gives
    \begin{equation}\label{eq:isopremetric-inequality}
    	\mathcal A(r)(4\pi-K\mathcal A(r))\le\ell(r)^2,
    \end{equation}
where
    \[\ell(r)=\int_0^{2\pi}\sqrt{\sigma(re^{i\theta})}\,r\dd\theta.\]
The Cauchy--Schwarz inequality then gives
    \begin{align}
    	\ell(r)^2&\leq\left(\int_{0}^{2\pi}\sigma(re^{i\theta})\ d\theta\right)\left(\int_{0}^{2\pi}r^2\ d\theta\right)=2\pi r\mathcal{A}'(r),
    \end{align}
Combining this with \eqref{eq:isopremetric-inequality}, we obtain
    \begin{equation}\label{eq:lip-rigidity-circle-comparison}
    	\mathcal A(r)(4\pi-K\mathcal A(r))
    	\le\ell(r)^2\le2\pi r\mathcal A'(r).
    \end{equation}
Therefore,
\begin{equation}\label{eq:lip-rigidity-G-comparison}
 G_j(a)\ge G_K(a),\qquad G_p(a)\ge G_K(a),
\end{equation}
where $G_K(a)=a(4\pi-Ka)$.

We next prove that
    \begin{equation}\label{eq:lip-rigidity-G-equality}
    	G_p(a)=G_K(a)\qquad(0<a<M).
    \end{equation}
If $G_p(a_0)>G_K(a_0)$ at some $a_0\in(0,M)$, then continuity and \eqref{eq:lip-rigidity-G-convergence} allow us to choose a nonzero $h\in C_c^\infty((0,M))$, $h\ge0$, such that for all sufficiently large $j$,
    \begin{equation}\label{eq:tilde-G}
    	\widetilde G:=G_K+h\le G_j\quad\text{on }(0,M_j].
    \end{equation}
Equations~\eqref{eq:lip-rigidity-kappa-coefficient} and \eqref{eq:tilde-G}, together with Lemmas~\ref{lem:lip-rigidity-coefficient} and \ref{lem:lip-rigidity-model-identification}, give
\[
 \limsup_j\kappa_j=\limsup_j\kappa(G_{j};M_{j})
 \le\lim_j\kappa(\widetilde G;M_j)
 =\kappa(\widetilde G;M)
 <\kappa(G_K;M)=\lambda_K(M),
\]
contradicting \eqref{eq:lip-rigidity-kappa-limit}. Hence \eqref{eq:lip-rigidity-G-equality} holds.

\emph{Step 4: Interior isometry.}
By \eqref{eq:lip-rigidity-G-equality}, both inequalities in \eqref{eq:lip-rigidity-circle-comparison} are equalities. The equality condition in the Cauchy--Schwarz inequality shows that $\sigma(re^{i\theta})$ is independent of $\theta$. Moreover,
\[
 2\pi r\mathcal A'(r)=\mathcal A(r)(4\pi-K\mathcal A(r)),
 \qquad\lim_{r\uparrow1}\mathcal A(r)=M,
\]
has the solution
\[
 \mathcal A(r)=\frac{Mr^2}{1-b+br^2},
 \qquad \sigma(r)=\frac{\mathcal A'(r)}{2\pi r}
 =p_{K,M}(r).
\]
Therefore,
\begin{equation}\label{eq:lip-rigidity-isometry}
\omega(\Phi(z))|\Phi'(z)|^2=p_{K,M}(|z|),
\end{equation}
that is, $\Phi$ maps the interior of the model disk isometrically onto $(\Om,\omega|dz|^2)$.

{Conversely, suppose that \eqref{eq:lip-rigidity-isometry} holds. Composition with $\Phi$ preserves weighted $L^2$ mass and Dirichlet energy. It also identifies the form domains: if $f\in H^1(\D)$, then $u=f\circ\Phi^{-1}$ is locally $H^1$ on $\Om$ with finite Dirichlet energy and weighted $L^2$ mass. Applying \eqref{eq:weighted-H1-control} to $u|_{\Om_j}$ and letting $j\to\infty$ gives $u\in H^1(\Om)$. In the other direction, $p_{K,M}$ is bounded above and below by positive constants on $\overline\D$, so finite transformed mass and energy give $u\circ\Phi\in H^1(\D)$. The min--max characterization therefore identifies the Neumann spectra. In particular, $\mu_2(\Om;\omega)=\mu_3(\Om;\omega)=\lambda_K(M)$, and equality follows.}
\end{proof}

\section{Higher-dimensional counterexamples to Conjecture 1.3}
\label{sec:highdim-counterexample}

\begingroup
\theoremstyle{plain}
\newtheorem{chapterSixLemma}[theorem]{Lemma}
\theoremstyle{remark}
\newtheorem{chapterSixRemark}[theorem]{Remark}
\let\lemma\chapterSixLemma
\let\endlemma\endchapterSixLemma
\let\remark\chapterSixRemark
\let\endremark\endchapterSixRemark
\renewcommand{\proofname}{Proof}

{In this section we prove Theorem~\ref{thm:highdim-counterexample}. We remove $2n$ small oblate ellipsoidal holes from a sufficiently large spherical cap and compare the resulting domain with a smaller cap of the same volume. The comparison uses positivity of the derivative of the cap eigenvalue with respect to its radius. We first establish the required eigenvalue asymptotics for domains with small holes.}

\subsection{Multiple-eigenvalue asymptotics for small holes on manifolds}

\begin{lemma}\label{lem:highdim-manifold-small-holes}
Let $(D,g)$ be a smooth compact $n$-dimensional Riemannian { manifold} with smooth boundary, where $n\ge3$.  Fix distinct interior points
$p_1,\dots,p_N$.  In each tangent space, fix a smooth bounded domain
$\Sigma_\alpha$ containing the origin and having connected exterior.  Set
\[
 { H_{\alpha,\varepsilon}:=\exp_{p_\alpha}(\varepsilon\Sigma_\alpha),\qquad
 D_\varepsilon:=D\setminus\bigcup_\alpha\overline{H_{\alpha,\varepsilon}}.}
\]
Let $\mu$ be a Neumann eigenvalue of $D$ of multiplicity $m$, let
$E(\mu)$ denote its real eigenspace, and let $u_1,\dots,u_m$ be an
$L^2$-orthonormal basis of this eigenspace.  In orthonormal coordinates on the
tangent space, write
$E_\alpha=\mathbb R^n\setminus\overline{\Sigma_\alpha}$ and let $\nu_E$
be the outward unit normal to $E_\alpha$.  For $q\in\mathbb R^n$, let
$W_{\alpha,q}\in\mathcal D^{1,2}(E_\alpha)$ be the unique solution of
    \begin{equation}\label{eq:exterior-problem}
    	-\Delta W_{\alpha,q}=0,\qquad
    	\partial_{\nu_E}W_{\alpha,q}=q\cdot\nu_E
    	\quad\hbox{on }\partial\Sigma_\alpha,
    \end{equation}
where
\[
 \mathcal D^{1,2}(E_\alpha)
 :=\left\{v\in L^{2n/(n-2)}(E_\alpha):
 \nabla v\in L^2(E_\alpha)\right\},\qquad
 \|v\|_{\mathcal D^{1,2}(E_\alpha)}
 :=\|\nabla v\|_{L^2(E_\alpha)},
\]
with the boundary condition understood in the weak sense:
\[
 \int_{E_\alpha}\nabla W_{\alpha,q}\cdot\nabla\varphi\,\dd y
 =\int_{\partial\Sigma_\alpha}
 \varphi(q\cdot\nu_E)\,\dd S,
 \qquad \varphi\in\mathcal D^{1,2}(E_\alpha).
\]
Define
    \begin{equation}\label{eq:T_alpha}
         T_\alpha(q,q')=\int_{E_\alpha}
 \nabla W_{\alpha,q}\cdot\nabla W_{\alpha,q'}\,\dd y.
    \end{equation}
If $\mu=\mu_k(D)=\cdots=\mu_{k+m-1}(D)$, then
\begin{equation}\label{eq:highdim-manifold-hole-branches}
 \mu_{k+i-1}(D_\varepsilon)
 =\mu+\varepsilon^n\eta_i(\mathbf B)+o(\varepsilon^n),\qquad i=1,\ldots,m,
\end{equation}
where { $\eta_i(\mathbf B)$ are eigenvalues of $\mathbf{B}$} listed in nondecreasing order and
\begin{equation}\label{eq:highdim-manifold-hole-matrix}
 \begin{aligned}
 (\mathbf B)_{ij}=\sum_\alpha\Bigl\{&
 |\Sigma_\alpha|\bigl[\mu u_i(p_\alpha)u_j(p_\alpha)
 -\langle\nabla u_i(p_\alpha),\nabla u_j(p_\alpha)\rangle\bigr]\\
 &-T_\alpha(\nabla u_i(p_\alpha),\nabla u_j(p_\alpha))\Bigr\}.
 \end{aligned}
\end{equation}
\end{lemma}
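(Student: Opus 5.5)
Our plan is to prove the two-sided asymptotics by combining an upper bound from explicit trial functions with a lower bound from a compactness and concentration argument. Both bounds are organized as a degenerate perturbation problem on the $m$-dimensional eigenspace $E(\mu)$.

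\emph{Upper bound.} For $u=\sum_i c_iu_i\in E(\mu)$ we build the corrected trial function
\[
 \widetilde u_\varepsilon=u+\sum_\alpha\chi_\alpha(x)\,\varepsilon\,W_{\alpha,\nabla u(p_\alpha)}\bigl(\exp_{p_\alpha}^{-1}(x)/\varepsilon\bigr),
\]
where $\chi_\alpha$ is a cutoff equal to $1$ near $p_\alpha$. The sign convention is chosen so that the corrector cancels the normal derivative of the linear part $\nabla u(p_\alpha)\cdot y$ on $\partial\Sigma_\alpha$. Restricted to $D_\varepsilon$, these functions form an $m$-dimensional space (together with the first $k-1$ eigenfunctions, which are only perturbed at order $o(1)$ in energy). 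We then expand the Rayleigh quotient:
\begin{itemize}
\item Removing the holes subtracts $\varepsilon^n|\Sigma_\alpha|\bigl(u(p_\alpha)^2\bigr)$ from the mass and $\varepsilon^n|\Sigma_\alpha||\nabla u(p_\alpha)|^2$ from the energy, up to $o(\varepsilon^n)$, by Taylor expansion in normal coordinates.
\item The corrector contributes $-\varepsilon^nT_\alpha(\nabla u(p_\alpha),\nabla u(p_\alpha))$ to the energy. This comes from the cross term, which by the weak formulation equals $-2\varepsilon^nT_\alpha$, plus the self term $+\varepsilon^nT_\alpha$.
\item The error terms are controlled using the decay $|W_{\alpha,q}(y)|\lesssim|y|^{1-n}$ and $|\nabla W_{\alpha,q}(y)|\lesssim|y|^{-n}$. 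The dipole decay follows because the flux $\int_{\partial\Sigma_\alpha} q\cdot\nu_E\,dS=0$. The metric deviation $O(|x|^2)$ in normal coordinates contributes only $o(\varepsilon^n)$, and the cutoff region contributes $O(\varepsilon^{2n})$ after using $n\ge3$.
\end{itemize}
Writing $\mu=\frac{\text{energy}}{\text{mass}}$, the first-order change of the quotient is $\varepsilon^n c^T\mathbf Bc$. Min--max over subspaces of this span then gives $\mu_{k+i-1}(D_\varepsilon)\le\mu+\varepsilon^n\eta_i(\mathbf B)+o(\varepsilon^n)$. A standard block argument handles the lower eigenvalues: they stay strictly below $\mu$, and the off-diagonal couplings are $O(\varepsilon^{n})$, so they only enter at order $\varepsilon^{2n}$.

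\emph{Lower bound.} We need uniform extension operators $H^1(D_\varepsilon)\to H^1(D)$ for scaled holes. These exist for $n\ge2$ by scaling a fixed extension across $\partial\Sigma_\alpha$ into $\Sigma_\alpha$, which works because the exterior is connected. With them, eigenfunctions of $D_\varepsilon$ with eigenvalues near $\mu$ converge to $E(\mu)$, which gives $\mu_{k+i-1}(D_\varepsilon)\to\mu$.

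To get the $\varepsilon^n$ term we use a quasimode (Vishik--Lyusternik type) lemma, in the manner of Maz'ya--Nazarov--Plamenevskii or of the analogous lemma in Bucur--Laugesen--Martinet--Nahon. We take the $m$ functions $\widetilde u_{\varepsilon}$ associated with eigenvectors of $\mathbf B$ and form the operator $\mathcal K_\varepsilon=(I-\Delta_{D_\varepsilon})^{-1}$ on $L^2(D_\varepsilon)$. We verify the quasimode estimate
\[
 \|\mathcal K_\varepsilon\widetilde u-(1+\mu+\varepsilon^n\eta)^{-1}\widetilde u\|=o(\varepsilon^n)\|\widetilde u\|.
\]
This requires an improved corrector: we add the next-order Taylor term and the correction of the volume mass. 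This yields, for each $\eta_i$, an eigenvalue of $D_\varepsilon$ within $o(\varepsilon^n)$ of $\mu+\varepsilon^n\eta_i$, counted with multiplicity when the $\eta_i$ coincide. Combined with convergence, which says exactly $m$ eigenvalues lie near $\mu$, this gives the matching asymptotics.

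The main obstacle is the lower-order accounting in this quasimode estimate. In particular, one must check that the residual, measured in $H^{-1}$-type norms against the resolvent, is genuinely $o(\varepsilon^n)$ rather than $O(\varepsilon^{n-1})$. If that estimate fails to close, the alternative is a direct lower bound. Test the eigenfunctions $v_\varepsilon$ of $D_\varepsilon$ against the corrected $\widetilde u$ via Green's formula, which produces the identity
\[
 (\mu_\varepsilon-\mu)\int v_\varepsilon\widetilde u=\text{boundary terms on }\partial H_{\alpha,\varepsilon}.
\]
Then show, by blow-up around each $p_\alpha$, that $v_\varepsilon(p_\alpha+\varepsilon y)$ converges to $u(p_\alpha)+\varepsilon(\nabla u(p_\alpha)\cdot y+W_{\alpha,\nabla u(p_\alpha)})$ locally in $H^1$. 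This uses $n\ge3$ in the $\mathcal D^{1,2}$ framework, so that Liouville-type uniqueness of the exterior problem holds.
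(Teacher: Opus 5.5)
\paragraph{Verdict.} Your upper bound is sound, but the lower bound, which is the hard half of the lemma, is left open, and the quasimode route you sketch for it cannot close as described.

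\paragraph{The upper bound.} Your cut-off trial functions $u-\chi_\alpha\varepsilon W_{\alpha,\nabla u(p_\alpha)}(\exp_{p_\alpha}^{-1}(\cdot)/\varepsilon)$ with a block min--max work, and this differs from the paper, which uses the exact corrector $W^u_\varepsilon$ solving $(-\Delta_g+1)W=0$ in $D_\varepsilon$ with Neumann data $\partial_{\nu_\varepsilon}u$ on the holes. One bookkeeping point: your bullet list omits the cross terms $-2\mu\int u\phi$ in the energy and $-2\int u\phi$ in the mass. These are $O(\rho^2\varepsilon^n)$ for a fixed cut-off radius $\rho$, not $o(\varepsilon^n)$. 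They cancel exactly in energy minus $\mu$ times mass, so $c^{T}\mathbf Bc$ is still the right coefficient.

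\paragraph{Why the quasimode lower bound fails.} The corrector tail behaves like $\varepsilon W(y/\varepsilon)\asymp\varepsilon^n|y|^{1-n}$ for $\varepsilon\ll|y|\lesssim1$, and it is harmonic, not a $\mu$-eigenfunction. So even the best first-order quasimode $u-W^u_\varepsilon$ has residual exactly $(\mu+1)W^u_\varepsilon$. When some $\nabla u(p_\alpha)\neq0$, this residual has $L^2$-size of order $\varepsilon^{n/2+1}\gg\varepsilon^n$ for $n\ge3$. Measured against the resolvent it is still of order $\varepsilon^n$. Indeed, $\mathcal K_\varepsilon\tilde u-t\tilde u=\mathcal K_\varepsilon(t\varepsilon^n\eta u-W^u_\varepsilon)$, and $\mathcal K_\varepsilon W^u_\varepsilon$ is essentially $\varepsilon^n$ times a dipole Green potential at the $p_\alpha$. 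Its component orthogonal to $E(\mu)$ is generically not $o(\varepsilon^n)$.

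Adding the next Taylor term or a volume-mass correction near the holes does not touch this residual, because it lives at intermediate scales. Removing it requires:
\begin{itemize}
\item an outer correction $\varepsilon^n u^{(1)}$ with prescribed dipole singularities at the $p_\alpha$, whose Fredholm solvability condition is exactly $\mathbf Bc=\eta c$;
\item a matched (compound) expansion joining it to the inner corrector;
\item a multiplicity-counting version of the quasimode lemma.
\end{itemize}
None of this is in the proposal.

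Your fallback, Green's formula plus blow-up of the eigenfunctions $v_\varepsilon$, has the same hole. Identifying the blow-up limit and evaluating $\int_{\partial H_{\alpha,\varepsilon}}v_\varepsilon\partial_\nu u$ to accuracy $o(\varepsilon^n)$ presupposes that $v_\varepsilon$ differs from some $v-W^v_\varepsilon$, $v\in E(\mu)$, by $o(\varepsilon^{n/2})$ in $H^1(D_\varepsilon)$. That is precisely the estimate you never supply.

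\paragraph{How the paper closes the gap.} The paper avoids needing an $o(\varepsilon^n)$ residual by making errors enter quadratically.
\begin{itemize}
\item With $P_\varepsilon u=u-W^u_\varepsilon\in\mathcal D(A_\varepsilon)$, one has exactly $(A_\varepsilon-\mu)P_\varepsilon u=(\mu+1)W^u_\varepsilon=:r^u_\varepsilon$.
\item Compactness and zero capacity of points give $\|W^u_\varepsilon\|_{L^2}=o(\varepsilon^{n/2})$.
\item A blow-up gives $a_\varepsilon(W^u_\varepsilon,W^v_\varepsilon)=\varepsilon^n\sum_\alpha T_\alpha+o(\varepsilon^n)$. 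Hence $q_\varepsilon(P_\varepsilon u,P_\varepsilon v)=\varepsilon^n\mathbf B(u,v)+o(\varepsilon^n)$.
\item Split $P_\varepsilon u$ along the exact $m$-dimensional spectral cluster $\mathcal C_\varepsilon$. The off-cluster part obeys $\|Q_\varepsilon P_\varepsilon u\|\le\gamma^{-1}\|r^u_\varepsilon\|$, so $|q_\varepsilon(Q_\varepsilon P_\varepsilon u,Q_\varepsilon P_\varepsilon v)|\le\gamma^{-1}\|r^u_\varepsilon\|\|r^v_\varepsilon\|=o(\varepsilon^n)$.
\item The Rayleigh--Ritz matrix on $\mathcal C_\varepsilon$ in the basis $\Pi_\varepsilon P_\varepsilon u_i$ is therefore $\varepsilon^n\mathbf B+o(\varepsilon^n)$, with Gram matrix $I_m+o(1)$.
\end{itemize}
This gives both bounds and the correct multiplicities at once, with no higher-order expansion. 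The missing idea in your plan is that residuals of size $o(\varepsilon^{n/2})$ suffice once you evaluate the form on the true cluster, rather than invoking a first-order quasimode lemma.
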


\begin{proof}
We first verify that the { exterior problem \eqref{eq:exterior-problem}} is well posed. 
Let
$B_R$ be a fixed ball containing $\overline{\Sigma_\alpha}$.  For
$v\in\mathcal D^{1,2}(E_\alpha)$, the Sobolev inequality,
H\"older's inequality, and the trace theorem on the fixed domain
$B_R\setminus\overline{\Sigma_\alpha}$ give, successively,
\begin{align*}
 \|v\|_{L^2(\partial\Sigma_\alpha)}
 &\le C\|v\|_{H^1(B_R\setminus\overline{\Sigma_\alpha})}\\
 &\le C\left(
 \|\nabla v\|_{L^2(E_\alpha)}
 +\|v\|_{L^{2n/(n-2)}(E_\alpha)}\right)
 \le C\|\nabla v\|_{L^2(E_\alpha)}.
\end{align*}
Consequently,
\[
 \left|\int_{\partial\Sigma_\alpha}
 v(q\cdot\nu_E)\,\dd S\right|
 \le C|q|\|v\|_{\mathcal D^{1,2}(E_\alpha)},
\]
{which implies 
    \[\int_{\partial\Sigma_\alpha}
    v(q\cdot\nu_E)\,\dd S\]
is a bounded linear functional in $\mathcal D^{1,2}(E_\alpha)$.} The bilinear form $\int_{E_\alpha}\nabla v\cdot\nabla\varphi$ is continuous and coercive,
so the Lax--Milgram theorem yields a unique $W_{\alpha,q}$.

\smallskip
{ \emph{ Step 1: uniform extension and spectral convergence.} We prove the spectral convergence
	\[
	\mu_j(D_\varepsilon)\longrightarrow\mu_j(D)
	\qquad\text{for every fixed }j.
	\]
The spectral convergence argument follows the same min--max and compactness approach as Lemma~\ref{lem:spectral-convergence}, once extension operators with norms independent of $\varepsilon$ are available. The shrinking holes prevent the use of the uniform cone property employed there, so we construct the extensions locally by rescaling fixed reference domains.
	
First, we construct the local extensions. In the tangent space at each $p_\alpha$, choose a fixed Euclidean ball
$B_{R_\alpha}$ such that
$\overline{\Sigma_\alpha}\subset B_{R_\alpha}$.  After decreasing
$\varepsilon_0>0$ if necessary, for every
$0<\varepsilon<\varepsilon_0$ the exponential maps are diffeomorphisms on
$B_{\varepsilon R_\alpha}$ and the corresponding images in the manifold
are pairwise disjoint.  Put
\[
 A_\alpha:=B_{R_\alpha}\setminus\overline{\Sigma_\alpha}.
\]
Since $A_\alpha$ is a fixed smooth domain, there is a bounded linear
extension operator
\[
 P_\alpha:H^1(A_\alpha)\longrightarrow H^1(B_{R_\alpha}),
 \qquad (P_\alpha z)|_{A_\alpha}=z.
\]
For $z\in H^1(A_\alpha)$ with $\int_{A_\alpha}z\,\dd y=0$, the Poincar\'e
inequality on $A_\alpha$ and the boundedness of $P_\alpha$ imply
\begin{equation}\label{eq:reference-extension-gradient-bound}
 \|P_\alpha z\|_{L^2(B_{R_\alpha})}
 +\|\nabla P_\alpha z\|_{L^2(B_{R_\alpha})}
 \le C_\alpha\|\nabla z\|_{L^2(A_\alpha)}.
\end{equation}

Next we define the extension on the whole manifold.  Write
\[
 A_{\alpha,\varepsilon}
 :=\varepsilon A_\alpha
 =B_{\varepsilon R_\alpha}\setminus
   \overline{\varepsilon\Sigma_\alpha}.
\]
 For $v\in H^1(A_{\alpha,\varepsilon})$, let
\[
 m_{\alpha,\varepsilon}(v)
 :=\frac{1}{|A_{\alpha,\varepsilon}|}
   \int_{A_{\alpha,\varepsilon}}v(x)\,\dd x,
 \qquad
 z_{\alpha,\varepsilon}(y)
 :=v(\varepsilon y)-m_{\alpha,\varepsilon}(v).
\]
Then $z_{\alpha,\varepsilon}$ has mean zero on $A_\alpha$.  Define the extension on the rescaled ball by
\[
 (P_{\alpha,\varepsilon}v)(x)
 :=m_{\alpha,\varepsilon}(v)
   +(P_\alpha z_{\alpha,\varepsilon})(x/\varepsilon),
 \qquad x\in B_{\varepsilon R_\alpha}.
\]
It equals $v$ on $A_{\alpha,\varepsilon}$.  Changing variables
$x=\varepsilon y$ in \eqref{eq:reference-extension-gradient-bound} gives
    \begin{align}\label{eq:scaled-extension-gradient-bound}
    	\|\nabla P_{\alpha,\varepsilon}v\|_{L^2(B_{\varepsilon R_\alpha})}^2
    	&=\varepsilon^{n-2}
    	\|\nabla P_\alpha z_{\alpha,\varepsilon}
    	\|_{L^2(B_{R_\alpha})}^2\notag\\
    	&\le C_\alpha\varepsilon^{n-2}
    	\|\nabla z_{\alpha,\varepsilon}\|_{L^2(A_\alpha)}^2\\
    	&=C_\alpha
    	\|\nabla v\|_{L^2(A_{\alpha,\varepsilon})}^2,
    \end{align}
and
    \begin{align}\label{eq:scaled-extension-L2-zero-mean-bound}
    	 \|P_\alpha z_{\alpha,\varepsilon}(\,\cdot\,/\varepsilon)
    	\|_{L^2(B_{\varepsilon R_\alpha})}^2
    	&=\varepsilon^n
    	\|P_\alpha z_{\alpha,\varepsilon}\|_{L^2(B_{R_\alpha})}^2\notag\\
    	&\le C_\alpha\varepsilon^n
    	\|\nabla z_{\alpha,\varepsilon}\|_{L^2(A_\alpha)}^2\\
    	&=C_\alpha\varepsilon^2
    	\|\nabla v\|_{L^2(A_{\alpha,\varepsilon})}^2.
    \end{align}
By the Cauchy--Schwarz inequality, we have
\begin{align}\label{eq:scaled-extension-mean-bound}
 |B_{\varepsilon R_\alpha}|
 |m_{\alpha,\varepsilon}(v)|^2&\leq\frac{\lvert B_{\varepsilon R_{\alpha}}\rvert}{\lvert A_{\alpha,\varepsilon}\rvert}\|v\|_{L^2(A_{\alpha,\varepsilon})}^2
 \le C_\alpha\|v\|_{L^2(A_{\alpha,\varepsilon})}^2.
\end{align}
Consequently,
\begin{equation}\label{eq:scaled-local-extension-L^2-bound}
 \|P_{\alpha,\varepsilon}v\|_{L^2(B_{\varepsilon R_\alpha})}^2
 \le C_\alpha\bigl(
   \|v\|_{L^2(A_{\alpha,\varepsilon})}^2
   +\varepsilon^2
    \|\nabla v\|_{L^2(A_{\alpha,\varepsilon})}^2\bigr),
\end{equation}
and
    \begin{equation}\label{eq:scaled-local-extension-gradient-bound}
    	\|\nabla P_{\alpha,\varepsilon}v\|_{L^2(B_{\varepsilon R_\alpha})}^2
    	\le C_\alpha
    	\|\nabla v\|_{L^2(A_{\alpha,\varepsilon})}^2.
    \end{equation}
Define
\[
 (\mathcal E_\varepsilon v)(q):=
 \begin{cases}
  v(q),&q\in D_\varepsilon,\\
  \bigl(P_{\alpha,\varepsilon}
  (v\circ\exp_{p_\alpha})\bigr)(x),
  &q=\exp_{p_\alpha}(x),\quad x\in B_{\varepsilon R_{\alpha}}.
 \end{cases}
\]
On the overlap domain $A_{\alpha,\varepsilon}=B_{\varepsilon R_\alpha}\setminus
\overline{\varepsilon\Sigma_\alpha}$, we have $P_{\alpha,\varepsilon}(v)=v$, and hence $\mathcal{E}_{\varepsilon}$ is well defined on the whole manifold $D$. Moreover, \eqref{eq:scaled-local-extension-L^2-bound} and \eqref{eq:scaled-local-extension-gradient-bound} and uniform equivalence of the coordinate and Riemannian norms show that it is a bounded linear operator
\[
 \mathcal E_\varepsilon:H^1(D_\varepsilon)\longrightarrow H^1(D),
 \qquad
 \left.\mathcal E_\varepsilon v\right|_{D_\varepsilon}=v,
 \qquad
 \|\mathcal E_\varepsilon v\|_{H^1(D)}
 \le C\|v\|_{H^1(D_\varepsilon)},
\]
where $C$ is independent of $\varepsilon$.

Choose
\[
 0<\gamma<\frac12\operatorname{dist}
 \bigl(\mu,\operatorname{spec}(-\Delta_D^N)\setminus\{\mu\}\bigr).
\]
By spectral convergence, for all sufficiently small $\varepsilon$, the
interval $(\mu-\gamma,\mu+\gamma)$ contains exactly $m$ eigenvalues of
$D_\varepsilon$, and the distance from these eigenvalues to the remaining
eigenvalues is at least $\gamma$.}

\smallskip
{\emph{Step 2: the corrector equation.}
Set
\[
 a_\varepsilon(v,w)=\int_{D_\varepsilon}
 (\langle\nabla v,\nabla w\rangle+vw)\,\dd v_g.
\]
For $u\in E(\mu)$, let $W_\varepsilon^u$ be the unique solution of
\begin{equation}\label{eq:highdim-hole-corrector}
 a_\varepsilon(W_\varepsilon^u,v)
 =\sum_\alpha\int_{\partial H_{\alpha,\varepsilon}}
 v\partial_{\nu_\varepsilon}u\,\dd S_g,
\end{equation}
i.e., the solution of
    \begin{equation}\label{eq:W_epsilon^u}
     \begin{cases}
       -\Delta_g W_{\varepsilon}^u+W_{\varepsilon}^u=0&\text{in }D_{\varepsilon},\\
       \partial_{\nu_{\varepsilon}}W_{\varepsilon}^u=\partial_{\nu_{\varepsilon}}u&\text{on }\bigcup_{\alpha}\partial H_{\alpha,\varepsilon},\\
       \partial_{\nu_g}W_{\varepsilon}^u=0&\text{on }\partial D.
     \end{cases}
    \end{equation}
where $\nu_\varepsilon$ is the outward unit normal to $D_\varepsilon$.
Apply the divergence theorem to $\mathcal E_\varepsilon v$ inside each
hole.  Since the outward normal to a hole is opposite to
$\nu_\varepsilon$, the right-hand side equals
\[
 -\sum_\alpha\int_{H_{\alpha,\varepsilon}}
 \bigl(\langle\nabla u,\nabla\mathcal E_\varepsilon v\rangle
 +(\Delta_gu)\mathcal E_\varepsilon v\bigr)\,\dd v_g.
\]
Set
\[
 H_\varepsilon:=\bigcup_{\alpha=1}^N H_{\alpha,\varepsilon}.
\]
The change of variables $x=\varepsilon y$
therefore gives
\[
 |H_\varepsilon|_g
 =\sum_{\alpha=1}^N|H_{\alpha,\varepsilon}|_g
 \le C\varepsilon^n\sum_{\alpha=1}^N|\Sigma_\alpha|
 \le C\varepsilon^n,
\]
where $|\cdot|_g$ denotes Riemannian volume. The standard elliptic estimates give
\[
 \|\nabla u\|_{L^\infty(D)}
 +\|\Delta_g u\|_{L^\infty(D)}
 \le C\|u\|_{L^2(D)},
 \qquad u\in E(\mu).
\]
Thus, for every $v\in H^1(D_\varepsilon)$,
\begin{align*}
 |a_\varepsilon(W_\varepsilon^u,v)|
 &\le C\|u\|_{L^2(D)}
   \int_{H_\varepsilon}
   \bigl(|\nabla\mathcal E_\varepsilon v|
         +|\mathcal E_\varepsilon v|\bigr)\,\dd v_g\\
 &\le C\|u\|_{L^2(D)}|H_\varepsilon|_g^{1/2}
   \left(\|\nabla\mathcal E_\varepsilon v\|_{L^2(H_\varepsilon)}
         +\|\mathcal E_\varepsilon v\|_{L^2(H_\varepsilon)}\right)\\
 &\le C\varepsilon^{n/2}\|u\|_{L^2(D)}
   \|\mathcal E_\varepsilon v\|_{H^1(D)}\\
 &\le C\varepsilon^{n/2}\|u\|_{L^2(D)}
   \|v\|_{H^1(D_\varepsilon)}.
\end{align*}
Taking $v=W_\varepsilon^u$ in this estimate
and using the definition of $a_\varepsilon$ gives
\[
 \|W_\varepsilon^u\|_{H^1(D_\varepsilon)}^2
 =a_\varepsilon(W_\varepsilon^u,W_\varepsilon^u)
 \le C\varepsilon^{n/2}\|u\|_{L^2(D)}
      \|W_\varepsilon^u\|_{H^1(D_\varepsilon)}.
\]
Hence
\[
 \|W_\varepsilon^u\|_{H^1(D_\varepsilon)}^2
 \le C\varepsilon^n\|u\|_{L^2(D)}^2,
 \qquad u\in E(\mu),
\]
with $C$ independent of $u$ and $\varepsilon$.  In particular,
\begin{equation}\label{eq:highdim-hole-corrector-energy}
 \|W_\varepsilon^u\|_{H^1(D_\varepsilon)}^2=O(\varepsilon^n)
\end{equation}
uniformly
for $u$ on the $L^2(D)$ unit sphere of $E(\mu)$.}

\smallskip
{\emph{Step 3: the $L^2$ mass is of lower order.} We further show that
\begin{equation}\label{eq:highdim-hole-corrector-mass}
 \sup_{\substack{u\in E(\mu)\\ \|u\|_{L^2(D)}=1}}
 \|W_\varepsilon^u\|_{L^2(D_\varepsilon)}^2=o(\varepsilon^n).
\end{equation}
Suppose, to the contrary, that this fails.  Then there exist a constant
$c_*>0$, a sequence $\varepsilon_\ell\downarrow0$, and
$u_\ell\in E(\mu)$ with $\|u_\ell\|_{L^2(D)}=1$ such that
\[
 \varepsilon_\ell^{-n}
 \|W_{\varepsilon_\ell}^{u_\ell}\|_{L^2(D_{\varepsilon_\ell})}^2
 \ge c_*
 \qquad\text{for every }\ell.
\]
Define
\[
 Z_\ell=\varepsilon_\ell^{-n/2}
 \mathcal E_{\varepsilon_\ell}W_{\varepsilon_\ell}^{u_\ell}.
\]
The uniform extension estimate and
\eqref{eq:highdim-hole-corrector-energy} give
\[
 \|Z_\ell\|_{H^1(D)}^2
 \le C\varepsilon_\ell^{-n}
 \|W_{\varepsilon_\ell}^{u_\ell}\|_{H^1(D_{\varepsilon_\ell})}^2
 \le C.
\]
After passing to a subsequence, we assume that
\[
 Z_\ell\rightharpoonup Z\quad\text{in }H^1(D),
 \qquad
 Z_\ell\longrightarrow Z\quad\text{in }L^2(D).
\]

We next identify the limit.  Fix $\varphi\in H^1(D)$ that vanishes in
a neighborhood of each $p_\alpha$.  All the holes lie in these
neighborhoods for sufficiently large $\ell$, so the trace of $\varphi$
on every hole boundary is zero.  Testing
\eqref{eq:highdim-hole-corrector} with
$v=\varphi|_{D_{\varepsilon_\ell}}$ and multiplying by
$\varepsilon_\ell^{-n/2}$ gives
\[
 \int_{D_{\varepsilon_\ell}}
 \bigl(\langle\nabla Z_\ell,\nabla\varphi\rangle
       +Z_\ell\varphi\bigr)\,\dd v_g=0.
\]
Both $\varphi$ and its weak gradient vanish
inside the holes, so the integral is unchanged if
$D_{\varepsilon_\ell}$ is replaced by $D$.  Passing to the weak
limit thus gives
\[
 \int_D\bigl(\langle\nabla Z,\nabla\varphi\rangle+Z\varphi\bigr)
 \,\dd v_g=0.
\]
Since $n\geq3$, a finite set of points has zero $H^1$ capacity. Thus functions vanishing near all $p_\alpha$ are dense in $H^1(D)$, and the identity extends to every $\varphi\in H^1(D)$. Taking $\varphi=Z$ yields
\[
 \int_D\bigl(|\nabla Z|^2+|Z|^2\bigr)\,\dd v_g=0,
\]
and hence $Z=0$.

The strong $L^2(D)$ convergence now implies
\[
 c_*
 \le\varepsilon_\ell^{-n}
     \|W_{\varepsilon_\ell}^{u_\ell}\|_{L^2(D_{\varepsilon_\ell})}^2
 =\|Z_\ell\|_{L^2(D_{\varepsilon_\ell})}^2
 \le\|Z_\ell\|_{L^2(D)}^2
 \longrightarrow0,
\]
a contradiction.}

{
 \smallskip
\emph{Step 4: leading-order corrector energy.} By Step 2, we know
    \[a_{\varepsilon}(W_{\varepsilon}^u,W_{\varepsilon}^v)=O(\varepsilon^n)\]
for fixed $u,v\in E(\mu)$. In this step we identify the coefficient of the $\varepsilon^n$ term:
    \begin{equation}\label{eq:highdim-hole-corrector-limit}
    	a_{\varepsilon}(W_{\varepsilon}^u,W_{\varepsilon}^v)=\varepsilon^n\sum_{\alpha}T_{\alpha}(\nabla u(p_{\alpha}),\nabla v(p_{\alpha}))+o(\varepsilon^n),
    \end{equation}
where $T_{\alpha}$ is defined by \eqref{eq:T_alpha}. The remainder is uniform
for $u,v$ on the unit sphere.

Fix a sufficiently small radius $\rho>0$.  Each map $\exp_{p_\alpha}$
is then a diffeomorphism on a neighborhood of $\overline{B_\rho(0)}$,
and the images of these closed balls are pairwise disjoint and contained
in the interior of $D$. Fix an index $\alpha$, $u\in E(\mu)$, and write
\[
 F_\varepsilon(y):=\exp_{p_\alpha}(\varepsilon y),
 \qquad
 U_{\alpha,\varepsilon}
 :=B_{\rho/\varepsilon}(0)\setminus\overline{\Sigma_\alpha}.
\]
For all sufficiently small $\varepsilon$,
$\varepsilon\overline{\Sigma_\alpha}\subset B_\rho(0)$ and
\[
 F_\varepsilon(U_{\alpha,\varepsilon})
 =D_\varepsilon\cap\exp_{p_\alpha}(B_\rho(0)).
\]
Define
\[
 \widehat W_{\alpha,\varepsilon}^u(y)
 :=\varepsilon^{-1}W_\varepsilon^u(F_\varepsilon(y)),
 \qquad y\in U_{\alpha,\varepsilon}.
\]

The energy estimate
\eqref{eq:highdim-hole-corrector-energy}, the uniform extension estimate,
and the Sobolev inequality on $D$, with $2^*=2n/(n-2)$, give
    \[\int_{U_{\alpha,\varepsilon}}
    |\nabla_y\widehat W_{\alpha,\varepsilon}^u|^2\,\dd y\le C\varepsilon^{-n}
    \|W_\varepsilon^u\|_{H^1(D_\varepsilon)}^2
    \le C\|u\|_{L^2(D)}^2,\]
and
    \[\|\widehat W_{\alpha,\varepsilon}^u\|_{L^{2^*}(U_{\alpha,\varepsilon})}\le C\varepsilon^{-n/2}\|\mathcal E_\varepsilon W_\varepsilon^u\|_{L^{2^*}(D)}
    \le C\|u\|_{L^2(D)}.\]
For each fixed $R$ with $\overline{\Sigma_\alpha}\subset B_R(0)$, put
\[
 E_{\alpha,R}:=B_R(0)\setminus\overline{\Sigma_\alpha}.
\]
Then $E_{\alpha,R}\subset U_{\alpha,\varepsilon}$ whenever
$\varepsilon<\rho/R$, and H\"older's inequality gives
\[
 \|\widehat W_{\alpha,\varepsilon}^u\|_{L^2(E_{\alpha,R})}
 \le |E_{\alpha,R}|^{1/n}
       \|\widehat W_{\alpha,\varepsilon}^u\|_{L^{2^*}(U_{\alpha,\varepsilon})}.
\]
Thus $\{\widehat{W}_{\alpha,\varepsilon}^u\}$ is bounded in $H^1(E_{\alpha,R})$.  From any sequence
$\varepsilon\downarrow0$ we may extract a diagonal subsequence such that
\[
 \widehat W_{\alpha,\varepsilon}^u\rightharpoonup W_*
 \quad\text{in }H^1(E_{\alpha,R})
 \quad\text{for every fixed}\ R\ \text{with}\ \overline{\Sigma_\alpha}\subset B_{R}.
\]
Letting $R\to\infty$ and applying weak lower semicontinuity to the gradient and $L^{2^*}$ norms, we obtain $W_*\in\mathcal D^{1,2}(E_\alpha)$.

Let $\varphi\in C_c^\infty(\overline{E_\alpha})$, and choose $R$ such
that $\overline{\Sigma_\alpha}$ and $\operatorname{supp}\varphi$
are contained in $B_R(0)$.  For $\varepsilon<\rho/R$, define
\[
 v_\varepsilon(F_\varepsilon(y))=\varepsilon\varphi(y),
 \qquad y\in U_{\alpha,\varepsilon},
\]
and set $v_\varepsilon=0$ on the remainder of $D_\varepsilon$.
Then $v_{\varepsilon}\in H^1(D_\varepsilon)$. Using this test function in~\eqref{eq:highdim-hole-corrector} and dividing
by $\varepsilon^n$, we obtain
    \begin{equation}\label{eq:test-with-v_epsilon}
    	\int_{E_{\alpha,R}}\sum_{i,j=1}^n
    	g^{ij}(\varepsilon y)
    	\partial_i\widehat W_{\alpha,\varepsilon}^u
    	\partial_j\varphi\sqrt{\det G_\varepsilon}\,\dd y+\varepsilon^2\int_{E_{\alpha,R}}
    	\widehat W_{\alpha,\varepsilon}^u\varphi
    	\sqrt{\det G_\varepsilon}\,\dd y=\int_{\partial\Sigma_\alpha}
    	\varphi b_\varepsilon^u J_\varepsilon\,\dd S,
    \end{equation}
where $G_\varepsilon(y)=(g_{ij}(\varepsilon y))$ is the metric matrix in normal coordinates, $(g^{ij}(\varepsilon y))=G_\varepsilon(y)^{-1}$, $J_{\varepsilon}=1+O(\varepsilon^2)$ is the surface Jacobian after extracting the factor $\varepsilon^{n-1}$, and
    \[b_\varepsilon^u(y)=\partial_{\nu_\varepsilon}u(F_\varepsilon(y))=\nabla u(p_\alpha)\cdot\nu_E(y)+O(\varepsilon).\]
Consequently,
\[
 \int_{E_\alpha}\nabla W_*\cdot\nabla\varphi\,\dd y
 =\int_{\partial\Sigma_\alpha}
   \varphi\bigl(\nabla u(p_\alpha)\cdot\nu_E\bigr)\,\dd S.
\]
This implies that $W_*$ solves the
exterior problem~\eqref{eq:exterior-problem} with
$q=\nabla u(p_\alpha)$.  Uniqueness identifies
$W_*=W_{\alpha,\nabla u(p_\alpha)}$ and hence
    \begin{equation}\label{eq:weak-limit-W_*}
    	\widehat W_{\alpha,\varepsilon}^u
    	\rightharpoonup W_{\alpha,\nabla u(p_\alpha)}
    	\quad\text{in }H^1(E_{\alpha,R})
    	\quad\text{for every fixed }R
    	\text{ with }\overline{\Sigma_\alpha}\subset B_R.
    \end{equation}

Return to all the holes, and restore the index $\alpha$ in the notation:
write $F_{\alpha,\varepsilon}(y)=\exp_{p_\alpha}(\varepsilon y)$,
and let $J_{\alpha,\varepsilon}$ and $b_{\alpha,\varepsilon}^u$
denote the quantities in \eqref{eq:test-with-v_epsilon}.
For another $v\in E(\mu)$, take
$W_\varepsilon^v$ as the test function
in~\eqref{eq:highdim-hole-corrector}.  This gives
\[
 a_\varepsilon(W_\varepsilon^u,W_\varepsilon^v)
 =\sum_\alpha\int_{\partial H_{\alpha,\varepsilon}}
 W_\varepsilon^v\partial_{\nu_\varepsilon}u\,\dd S_g.
\]
On each boundary,
$W_\varepsilon^v(F_{\alpha,\varepsilon}(y))
=\varepsilon\widehat W_{\alpha,\varepsilon}^v(y)$ in the trace sense.
The change of variables gives
\[
 \varepsilon^{-n}a_\varepsilon(W_\varepsilon^u,W_\varepsilon^v)
 =\sum_\alpha\int_{\partial\Sigma_\alpha}
 \widehat W_{\alpha,\varepsilon}^v
 b_{\alpha,\varepsilon}^u J_{\alpha,\varepsilon}\,\dd S.
\]
By \eqref{eq:weak-limit-W_*} and continuity of the trace operator,
\[
 \widehat W_{\alpha,\varepsilon}^v\big|_{\partial\Sigma_\alpha}
 \rightharpoonup
 W_{\alpha,\nabla v(p_\alpha)}\big|_{\partial\Sigma_\alpha}
 \quad\text{in }L^2(\partial\Sigma_\alpha).
\]
Thus,
\begin{equation}
 \varepsilon^{-n}a_\varepsilon(W_\varepsilon^u,W_\varepsilon^v)
 \longrightarrow
 \sum_\alpha\int_{\partial\Sigma_\alpha}
 W_{\alpha,\nabla v(p_\alpha)}
 (\nabla u(p_\alpha)\cdot\nu_E)\,\dd S
 =\sum_\alpha
 T_\alpha(\nabla u(p_\alpha),\nabla v(p_\alpha)).
\end{equation}
The last equality follows by testing the exterior equation for
$W_{\alpha,\nabla u(p_\alpha)}$ with
$W_{\alpha,\nabla v(p_\alpha)}$. Since $u\mapsto W_\varepsilon^u$ and $q\mapsto W_{\alpha,q}$ are linear, convergence for the finitely many pairs of basis elements of $E(\mu)$ implies convergence in the norm of bilinear forms. This proves the asserted uniformity for $u,v$ on the unit sphere.}

\smallskip
{\emph{Step 5: complete the proof.}
Let $P_\varepsilon u=u-W_\varepsilon^u$ and define
\[
 q_\varepsilon(v,w):=a_\varepsilon(v,w)
 -(\mu+1)(v,w)_{L^2(D_\varepsilon)}.
\]
Since $-\Delta_g u=\mu u$ in $D$ and $\partial_\nu u=0$ on
$\partial D$, Green's formula on $D_\varepsilon$ gives, for
$u\in E(\mu)$ and $v\in H^1(D_\varepsilon)$,
    \begin{align}\label{eq:q_epsilon-u-v}
    	q_\varepsilon(u,v)
    	&=\int_{D_\varepsilon}
    	(\langle\nabla u,\nabla v\rangle-\mu uv)\,\dd v_g\\
    	&=\sum_\alpha\int_{\partial H_{\alpha,\varepsilon}}
    	v\partial_{\nu_\varepsilon}u\,\dd S_g
    	=a_\varepsilon(W_\varepsilon^u,v).
    \end{align}
The last equality is the defining equation~\eqref{eq:highdim-hole-corrector}.
As
    \[q_\varepsilon(W_\varepsilon^u,v)
    =a_\varepsilon(W_\varepsilon^u,v)
    -(\mu+1)(W_\varepsilon^u,v)_{L^2(D_\varepsilon)},\]
we obtain
\begin{equation}\label{eq:highdim-hole-residual}
 q_\varepsilon(P_\varepsilon u,v)
 =(\mu+1)(W_\varepsilon^u,v)_{L^2(D_\varepsilon)},
\end{equation}
for every $v\in H^1(D_\varepsilon)$.

For $u,v\in E(\mu)$, by \eqref{eq:q_epsilon-u-v}, we have
\[
 q_\varepsilon(u,W_\varepsilon^v)
 =a_\varepsilon(W_\varepsilon^u,W_\varepsilon^v)
 =q_\varepsilon(W_\varepsilon^u,v).
\]
Here the second equality uses the symmetry of $q_\varepsilon$ and
$a_\varepsilon$.  Expanding $q_\varepsilon(P_\varepsilon u,P_\varepsilon v)$ therefore yields
\[
 \begin{aligned}
 q_\varepsilon(P_\varepsilon u,P_\varepsilon v)
 &=q_\varepsilon(u,v)-q_\varepsilon(u,W_\varepsilon^v)
   -q_\varepsilon(W_\varepsilon^u,v)
   +q_\varepsilon(W_\varepsilon^u,W_\varepsilon^v)\\
 &=q_\varepsilon(u,v)
   -a_\varepsilon(W_\varepsilon^u,W_\varepsilon^v)
   -(\mu+1)(W_\varepsilon^u,W_\varepsilon^v)_{L^2(D_\varepsilon)}.
 \end{aligned}
\]
Since $u,v\in E(\mu)$, we have
    \[\int_D(\langle\nabla u,\nabla v\rangle-\mu uv)\,\dd v_g=0.\]
Therefore
\begin{align}\label{eq:highdim-hole-restricted-form}
 q_\varepsilon(P_\varepsilon u,P_\varepsilon v)
 ={}&-\int_{\bigcup_\alpha H_{\alpha,\varepsilon}}
  (\langle\nabla u,\nabla v\rangle-\mu uv)\,\dd v_g
 -a_\varepsilon(W_\varepsilon^u,W_\varepsilon^v)\notag\\
 &-(\mu+1)(W_\varepsilon^u,W_\varepsilon^v)_{L^2(D_\varepsilon)}.
\end{align}
A Taylor expansion in normal coordinates at each $p_\alpha$ gives
\begin{align}\label{eq:int-on-H_alpha_epsilon}
 &-\int_{\bigcup_\alpha H_{\alpha,\varepsilon}}
 (\langle\nabla u,\nabla v\rangle-\mu uv)\,\dd v_g\\
 &\qquad=\varepsilon^n\sum_\alpha|\Sigma_\alpha|
 \bigl[\mu u(p_\alpha)v(p_\alpha)
 -\langle\nabla u(p_\alpha),\nabla v(p_\alpha)\rangle\bigr]
 +o(\varepsilon^n),
\end{align}
uniformly for $u,v$ on the $L^2(D)$ unit sphere of $E(\mu)$. Now, by \eqref{eq:highdim-hole-corrector-mass}, \eqref{eq:highdim-hole-corrector-limit}, \eqref{eq:highdim-hole-restricted-form} and \eqref{eq:int-on-H_alpha_epsilon}, we obtain
\begin{equation}\label{eq:highdim-hole-effective-form}
 q_\varepsilon(P_\varepsilon u,P_\varepsilon v)
 =\varepsilon^n\mathbf B(u,v)+o(\varepsilon^n),
\end{equation}
where $\mathbf B$ is the bilinear form with matrix
\eqref{eq:highdim-manifold-hole-matrix} in the basis
$u_1,\ldots,u_m$, and the remainder is uniform on the unit sphere.
Furthermore, $\int_{D_\varepsilon}u_i u_j\,\dd v_g
=\delta_{ij}+O(\varepsilon^n)$, by
\eqref{eq:highdim-hole-corrector-mass},
    \begin{equation}\label{eq:P_epsilonu_i-P_epsilonu_j}
    	 \bigl((P_\varepsilon u_i,P_\varepsilon u_j)_{L^2(D_\varepsilon)}\bigr)_{i,j=1}^m
    	=I_m+o(1).
    \end{equation}

Write $A_\varepsilon=-\Delta_{D_\varepsilon}^N$ for the self-adjoint Neumann Laplacian associated with the Dirichlet form on $H^1(D_\varepsilon)$. The weak identity \eqref{eq:highdim-hole-residual} shows that $P_\varepsilon u\in\mathcal D(A_\varepsilon)$ and
\[
 (A_\varepsilon-\mu)P_\varepsilon u
 =r_\varepsilon^u:=(\mu+1)W_\varepsilon^u.
\]
\eqref{eq:highdim-hole-corrector-mass} now gives
\[
 \sup_{\substack{u\in E(\mu)\\ \|u\|_{L^2(D)}=1}}
 \|r_\varepsilon^u\|_{L^2(D_\varepsilon)}
 =o(\varepsilon^{n/2}).
\]
Let
$\mathcal C_\varepsilon$ be the spectral subspace spanned by eigenfunctions whose
 eigenvalues lie in $(\mu-\gamma,\mu+\gamma)$. By the last paragraph of Step 1, $\dim\mathcal C_\varepsilon=m$ for small $\varepsilon$. Let $\Pi_\varepsilon$ be the $L^2(D_\varepsilon)$-orthogonal projection
onto $\mathcal C_\varepsilon$ and let $Q_\varepsilon=I-\Pi_\varepsilon$.  Every eigenvalue of
$A_\varepsilon$ on $\operatorname{ran}Q_\varepsilon$ satisfies
$|\lambda-\mu|\ge\gamma$.  Since $Q_\varepsilon$ commutes with
$A_\varepsilon$, the spectral theorem gives
\[
 \|Q_\varepsilon f\|_{L^2(D_\varepsilon)}
 \le\gamma^{-1}
 \|(A_\varepsilon-\mu)Q_\varepsilon f\|_{L^2(D_\varepsilon)},
 \qquad f\in\mathcal D(A_\varepsilon).
\]
Applying this to $P_\varepsilon u$ and using
$(A_\varepsilon-\mu)Q_\varepsilon P_\varepsilon u
=Q_\varepsilon r_\varepsilon^u$ gives
\begin{align*}
 \|Q_\varepsilon P_\varepsilon u\|_{L^2(D_\varepsilon)}
 &\le\gamma^{-1}\|r_\varepsilon^u\|_{L^2(D_\varepsilon)}
 =o(\varepsilon^{n/2}).
\end{align*}
Since
$q_\varepsilon(f,g)=((A_\varepsilon-\mu)f,g)_{L^2(D_\varepsilon)}$
for $f\in\mathcal D(A_\varepsilon)$ and $g\in H^1(D_\varepsilon)$, we have
\begin{align}\label{eq:q_epsilon-Q_epsilonP_epsilonu}
 \left|q_\varepsilon(Q_\varepsilon P_\varepsilon u,
 Q_\varepsilon P_\varepsilon v)\right|
 &=\left|(Q_\varepsilon r_\varepsilon^u,
   Q_\varepsilon P_\varepsilon v)_{L^2(D_\varepsilon)}\right|\\
 &\le\gamma^{-1}
 \|r_\varepsilon^u\|_{L^2(D_\varepsilon)}
 \|r_\varepsilon^v\|_{L^2(D_\varepsilon)}
 =o(\varepsilon^n),
\end{align}
uniformly for $u,v$ on the $L^2(D)$ unit sphere of $E(\mu)$.

Let $J_\varepsilon=\Pi_\varepsilon P_\varepsilon:E(\mu)\to
\mathcal C_\varepsilon$.  The $L^2$-orthogonal decomposition
$P_\varepsilon u=J_\varepsilon u+Q_\varepsilon P_\varepsilon u$ shows that
\[
 \begin{aligned}
 (J_\varepsilon u_i,J_\varepsilon u_j)_{L^2(D_\varepsilon)}=(P_\varepsilon u_i,P_\varepsilon u_j)_{L^2(D_\varepsilon)}
 -(Q_\varepsilon P_\varepsilon u_i,
   Q_\varepsilon P_\varepsilon u_j)_{L^2(D_\varepsilon)}.
 \end{aligned}
\]
Consequently, by \eqref{eq:P_epsilonu_i-P_epsilonu_j} and \eqref{eq:q_epsilon-Q_epsilonP_epsilonu},
\[
 \bigl((J_\varepsilon u_i,J_\varepsilon u_j)_{L^2(D_\varepsilon)}\bigr)_{i,j=1}^m
 =I_m+o(1).
\]
So $J_\varepsilon$ is injective for small $\varepsilon$.  Both $E(\mu)$ and
$\mathcal C_\varepsilon$ have dimension $m$; hence $J_\varepsilon$
is an isomorphism onto $\mathcal C_\varepsilon$, and $J_{\varepsilon}u_{1},\cdots,J_{\varepsilon}u_{m}$ is a basis for $\mathcal{C}_{\varepsilon}$.

The spectral decomposition is also orthogonal for $q_\varepsilon$.
Indeed, if $x\in\mathcal C_\varepsilon$ and
$y\in\mathcal D(A_\varepsilon)\cap\mathcal C_\varepsilon^\perp$,
then $(A_\varepsilon-\mu)x\in\mathcal C_\varepsilon$ and thus
$q_\varepsilon(x,y)=((A_\varepsilon-\mu)x,y)_{L^2(D_\varepsilon)}=0$.
Therefore
    \[q_\varepsilon(P_\varepsilon u,P_\varepsilon v)=q_\varepsilon(J_\varepsilon u,J_\varepsilon v)+q_\varepsilon(Q_\varepsilon P_\varepsilon u,
    Q_\varepsilon P_\varepsilon v).\]
Combining this identity with~\eqref{eq:highdim-hole-effective-form}
and \eqref{eq:q_epsilon-Q_epsilonP_epsilonu} gives
\[
 q_\varepsilon(J_\varepsilon u,J_\varepsilon v)
 =\varepsilon^n\mathbf B(u,v)+o(\varepsilon^n),
\]
uniformly on the unit sphere of $E(\mu)$.

To obtain \eqref{eq:highdim-manifold-hole-branches}, put $e_i=J_\varepsilon u_i$ for $i=1,\ldots,m$. They form a basis of
$\mathcal C_\varepsilon$.  Define
\[
 F_\varepsilon:=
 \bigl(q_\varepsilon(e_i,e_j)\bigr)_{i,j=1}^m,
 \qquad
 G_\varepsilon:=
 \bigl((e_i,e_j)_{L^2(D_\varepsilon)}\bigr)_{i,j=1}^m.
\]
Suppose $\phi=\sum_{j=1}^m c_j e_j\ne0$ is an eigenfunction of
$A_\varepsilon$ in $\mathcal C_\varepsilon$, with eigenvalue
$\lambda$.  For every $i$, we have
\[
 \begin{aligned}
 (F_\varepsilon c)_i
 &=q_\varepsilon(e_i,\phi)
  =((A_\varepsilon-\mu)\phi,e_i)_{L^2(D_\varepsilon)}\\
 &=(\lambda-\mu)(\phi,e_i)_{L^2(D_\varepsilon)}
  =(\lambda-\mu)(G_\varepsilon c)_i.
 \end{aligned}
\]
Conversely, if $F_\varepsilon c=\theta G_\varepsilon c$ with
$c\ne0$ and $\phi=\sum_jc_j e_j$, then
$((A_\varepsilon-\mu)\phi-\theta\phi,e_i)_{L^2(D_\varepsilon)}=0$
for every $i$.  The vector
$(A_\varepsilon-\mu)\phi-\theta\phi$ belongs to
$\mathcal C_\varepsilon$, because this is a spectral subspace of
$A_\varepsilon$.  As the $e_i$ form a basis of that subspace, the
vector must vanish.  Thus the generalized eigenvalues $\theta$ are
exactly the eigenvalues of $A_\varepsilon-\mu$ on
$\mathcal C_\varepsilon$, counted with multiplicity:
    \begin{equation}\label{eq:eigenvalue-theta}
    	\theta=\mu_{k+i-1}(D_\varepsilon)-\mu,
    	\qquad i=1,\ldots,m,
    \end{equation}
in nondecreasing order.

It remains to estimate these generalized eigenvalues. By \eqref{eq:highdim-hole-effective-form}, we have
\[
 F_\varepsilon=\varepsilon^n\mathbf B+o(\varepsilon^n),
 \qquad G_\varepsilon=I_m+o(1).
\]
In particular, $G_\varepsilon$ is positive definite and
$G_\varepsilon^{-1/2}=I_m+o(1)$.  The generalized eigenvalues of
$F_\varepsilon c=\theta G_\varepsilon c$ are the ordinary
eigenvalues of the symmetric matrix
\[
 G_\varepsilon^{-1/2}F_\varepsilon G_\varepsilon^{-1/2}
 =\varepsilon^n\mathbf B+o(\varepsilon^n).
\]
The min--max principle for symmetric matrices shows that each ordered
eigenvalue changes by at most the operator norm of the error matrix.
Hence the $i$th generalized eigenvalue equals
$\varepsilon^n\eta_i(\mathbf B)+o(\varepsilon^n)$, including when
$\mathbf B$ has repeated eigenvalues.  Combining this estimate with \eqref{eq:eigenvalue-theta} yields, for every $i=1,\ldots,m$,
\[
 \mu_{k+i-1}(D_\varepsilon)
 =\mu+\varepsilon^n\eta_i(\mathbf B)+o(\varepsilon^n),
\]
which is~\eqref{eq:highdim-manifold-hole-branches}.}
\end{proof}

\begin{remark}
Felli--Liverani--Ognibene
\cite[Theorem~1.3 and equation~(1.8)]{FelliLiveraniOgnibene2026}
{established} a multiple-eigenvalue expansion for a single small hole in
Euclidean space. {Since their operator} is $-\Delta+1$, {the quantity} 
$\lambda-1$ {in their work} corresponds to our $\mu$.  {Let $\mathcal L$ denote the
bilinear form in their equation~(1.8). In the Euclidean single-hole case, with the identification
$\lambda=\mu+1$, we have $\mathbf B=-\mathcal L$. Lemma~\ref{lem:highdim-manifold-small-holes} adapts this framework to finitely many holes on a Riemannian manifold; that extension is established by the proof above.}  Consequently, if
$\ell_1\ge\cdots\ge\ell_m$ are the eigenvalues of $\mathcal L$ in
nonincreasing order, then the eigenvalues of $\mathbf B$ here, in
nondecreasing order, satisfy
\[
 \eta_i(\mathbf B)=-\ell_i,
 \qquad i=1,\ldots,m.
\]
{The eigenvalues in \cite{FelliLiveraniOgnibene2026} are indexed starting from $0$, whereas ours are indexed starting from $1$. Thus, on the same Euclidean domain, their $\lambda_j$ corresponds to our $\mu_{j+1}+1$.}
\end{remark}

\subsection{Polarization coefficients of oblate ellipsoids}

For an integral representation of the Newtonian potential of an
ellipsoid, see \cite[Theorem~2.1]{DiFratta2016}. {For every \(n\geq3\), we construct an explicit solution of the exterior Neumann problem \eqref{eq:exterior-problem} for each principal-axis direction, verify that it belongs to \(\mathcal{D}^{1,2}(E)\), and compute the associated bilinear energies \(T\) defined in \eqref{eq:T_alpha}.}

\begin{lemma}\label{lem:highdim-oblate-polarization}
{For $n\geq3$ and $a_1,\,a_2,\,\cdots,\,a_n>0$, let
\begin{equation}
\Sigma=\left\{x\in\mathbb{R}^n:\sum_{i=1}^n \frac{x_i^2}{a^2_i}<1\right\}.
\end{equation}
 For $1\leq i\leq n$, define
\[
 L_i=\frac{\prod^n_{j=1}  a_j}{2}\int_0^\infty
 \frac{\dd s}{(a_i^2+s)\sqrt{\prod^n_{j=1} (a_j^2+s)}}.
\]
Then we have $0<L_i<1$, $\sum^n_{i=1}  L_i=1$. Moreover, let \(e_1,\,\cdots,\,e_n\) be the standard orthonormal basis. The exterior energy $T$ defined by \eqref{eq:T_alpha}
satisfies
\[
 T(e_i,e_j)=\delta_{ij}|\Sigma|\frac{L_i}{1-L_i},\qquad 1\leq i,\,j\leq n.
\]
In particular, suppose that $a_n=\tau\in(0,\,1)$ and $a_1=a_2=\cdots=a_{n-1}=1$, then we have
\[
 L(\tau)\coloneqq L_1=L_2=\cdots=L_{n-1}=\frac\tau2\int_0^\infty
 \frac{\dd s}{(1+s)^{(n+1)/2}(\tau^2+s)^{1/2}}
 =O(\tau)\longrightarrow0\quad\text{as }\tau\to0^+.
\]}
\end{lemma}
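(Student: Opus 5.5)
The plan is to use the classical ellipsoidal-coordinate (Newtonian potential) representation of the exterior harmonic function with linear Dirichlet/Neumann behaviour. For $x$ outside $\Sigma$, let $\lambda(x)>0$ be the largest root of $\sum_j x_j^2/(a_j^2+\lambda)=1$. Write $\Pi(s)=\prod_j(a_j^2+s)$, and define
\[
 \phi_i(x)=x_i\,\frac{\prod_j a_j}{2}\int_{\lambda(x)}^\infty\frac{\dd s}{(a_i^2+s)\sqrt{\Pi(s)}} .
\]
This is the standard exterior potential, harmonic in $E=\mathbb R^n\setminus\overline\Sigma$. Its interior counterpart $x_iL_i$ (obtained by replacing $\lambda(x)$ by $0$) is harmonic inside $\Sigma$, and the two glue $C^1$-continuously in value across $\partial\Sigma$. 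The gluing statement amounts to the known fact that the Newtonian potential of the uniform density on $\Sigma$ is a quadratic inside; $\phi_i$ is, up to a constant, $\partial_i$ of that potential, i.e.\ a single-layer potential with density $\nu_i$. I would verify harmonicity directly by the classical computation $\partial_k\lambda = 2x_k/((a_k^2+\lambda)\,Q)$ with $Q=\sum_j 2x_j^2/(a_j^2+\lambda)^2$, or more simply cite \cite[Theorem~2.1]{DiFratta2016}. Since $\lambda\sim|x|^2$, we get $\phi_i=O(|x|^{1-n})$ and $\nabla\phi_i=O(|x|^{-n})$, so $\phi_i\in\mathcal D^{1,2}(E)$ for $n\ge3$.

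Next I would compute normal derivatives on $\partial\Sigma$. From the interior side, $\partial_\nu(x_iL_i)=L_i\nu_i$. The single-layer jump relation states that the jump of the normal derivative across $\partial\Sigma$ equals (minus) the density. This jump can be checked directly: at $\lambda=0$, only the $\lambda$-dependence of the lower limit contributes to the jump, and it gives $-\nu_i$ in the outward-from-$\Sigma$ normal. Hence the exterior-side derivative along the outward normal of $\Sigma$ is $(L_i-1)\nu_i$. Since $\nu_E=-\nu$, we get $\partial_{\nu_E}\phi_i=(1-L_i)\nu_E\cdot e_i$. By uniqueness in \eqref{eq:exterior-problem}, $W_{e_i}=\phi_i/(1-L_i)$.

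For the energy, I would test the weak equation: $T(e_i,e_j)=\int_{\partial\Sigma}W_{e_j}(e_i\cdot\nu_E)\dd S$. On $\partial\Sigma$ we have $W_{e_j}=x_jL_j/(1-L_j)$, and $\nu_E=-\nu$. The divergence theorem in $\Sigma$ gives $\int_{\partial\Sigma}x_j\nu_i\,dS=\delta_{ij}|\Sigma|$. The sign then has to be reconciled with $T\ge0$: since $T(e_i,e_i)=\int_E|\nabla W|^2>0$, I will recheck the orientation convention in the weak formulation, where the boundary term for $E$ uses $\nu_E$ as stated. The outcome should be $T(e_i,e_j)=\delta_{ij}|\Sigma|L_i/(1-L_i)$.

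The remaining facts are elementary. Positivity $L_i>0$ is obvious. For $\sum_i L_i=1$, note that
\[
 \sum_i\frac1{a_i^2+s}=\frac{\dd}{\dd s}\log\Pi(s),
\]
so $\sum_iL_i=\prod_j a_j\int_0^\infty -\frac{\dd}{\dd s}\Pi^{-1/2}\,\dd s=\prod_j a_j\,\Pi(0)^{-1/2}=1$. Together with positivity this gives $L_i<1$. In the oblate case the formula for $L(\tau)$ is immediate. For the bound $O(\tau)$, use $(\tau^2+s)^{-1/2}\le s^{-1/2}$; the integral $\int_0^\infty s^{-1/2}(1+s)^{-(n+1)/2}\dd s$ converges, giving $L\le C\tau$.

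The main obstacle is the careful verification of the jump relation and the sign conventions in the normal derivatives. I would first try to handle it by the explicit formula for $\nabla\lambda$ on $\partial\Sigma$, namely $\nabla\lambda=2(x_k/a_k^2)_k/\sum_j(x_j^2/a_j^4)$, which is parallel to $\nu$.
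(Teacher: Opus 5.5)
Your route is essentially the paper's. You use the same confocal potential (your $\phi_i$ equals $\frac{A}{2}V_i$, where $A=\prod_ja_j$ and $V_i=x_iI_i(\rho)$), the same boundary computation with $I_i(0)=2L_i/A$, $I_i'(0)=-1/(a_i^2A)$ and $\partial_\nu\rho=2/\sqrt{S_0}$, the same telescoping identity for $\sum_iL_i=1$, and the same divergence-theorem evaluation of the energy. However, the proposal contains a sign error that you detected but misdiagnosed.

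You correctly obtain $\partial_\nu\phi_i=(L_i-1)\nu_i$ on $\partial\Sigma$, where $\nu$ is outward from $\Sigma$ and the trace is taken from $E$. Since $\nu_E=-\nu$, this gives
\[
\partial_{\nu_E}\phi_i=(1-L_i)\,\nu_i=-(1-L_i)\,(\nu_E\cdot e_i),
\]
not $+(1-L_i)\,\nu_E\cdot e_i$. Hence the solution of \eqref{eq:exterior-problem} with $q=e_i$ is $W_{e_i}=-\phi_i/(1-L_i)$, which is exactly the paper's $W_i=-\frac{A}{2(1-L_i)}V_i$. Its trace on $\partial\Sigma$ is $-\frac{L_i}{1-L_i}x_i$. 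With this sign, testing the weak equation gives
\[
T(e_i,e_j)=\int_{\partial\Sigma}W_{e_j}\,(e_i\cdot\nu_E)\,dS
=\frac{L_j}{1-L_j}\int_{\partial\Sigma}x_j\,\nu_i\,dS
=\delta_{ij}\,|\Sigma|\,\frac{L_i}{1-L_i},
\]
which is positive, as it must be. The orientation convention in the weak formulation needs no repair: it is just Green's formula on $E$ with outward normal $\nu_E$. As written, your argument yields $-\delta_{ij}|\Sigma|L_i/(1-L_i)$, and the final value is asserted rather than derived. The fix is the corrected sign of $W_{e_i}$.

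There are also three smaller points. First, with $Q=\sum_j2x_j^2/(a_j^2+\lambda)^2$, your general formula gives $\partial_k\lambda=x_k/((a_k^2+\lambda)S)$, which is off by a factor $2$. The correct formula is $\partial_k\lambda=2x_k/((a_k^2+\lambda)S)$ with $S=\sum_jx_j^2/(a_j^2+\lambda)^2$; this is consistent with the boundary formula for $\nabla\lambda$ you use later. Second, the interior and exterior expressions glue only continuously across $\partial\Sigma$, not in $C^1$. The normal derivative jumps by $-\nu_i$, and that jump is precisely what your argument uses. Third, as the paper does, you should note that $\lambda$ extends smoothly up to $\partial E$ by the implicit function theorem. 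Then $\phi_i$ is smooth up to the boundary, so the boundary computation and the membership $\phi_i\in\mathcal D^{1,2}(E)$ near $\partial\Sigma$ are justified.
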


\begin{proof}
Write $A=\prod_{j=1}^na_j$. {For each
$x\in E:=\mathbb R^n\setminus\overline\Sigma$, the function
\[
 r\longmapsto\sum_{j=1}^n\frac{x_j^2}{a_j^2+r}
\]
is strictly decreasing on $[0,\infty)$, starting from a value greater than $1$ at $r=0$ and tending to 0 as $r\to+\infty$.}  Hence there is a unique $\rho(x)>0$ such that
\begin{equation}
\label{confocal equation}
 \sum_{j=1}^n\frac{x_j^2}{a_j^2+\rho(x)}=1.
\end{equation}
{Set $\rho=0$ continuously on $\partial\Sigma$.  Since the derivative of
the left-hand side of \eqref{confocal equation}  with respect to $\rho$ is strictly negative, the
implicit function theorem shows that $\rho$ is smooth in $E$ and has a smooth extension to the boundary $\partial E$}.

Define
\[
 A_j=a_j^2+\rho,\qquad
 S=\sum_{j=1}^n\frac{x_j^2}{A_j^2},\qquad
 H=\sum_{j=1}^n\frac1{A_j},\qquad
 K=\sum_{j=1}^n\frac{x_j^2}{A_j^3}.
\]
Differentiating \eqref{confocal equation} with respect to $x_i$ gives
\begin{equation}
\label{nabla of rho}
 \partial_i\rho=\frac{2x_i}{A_iS},
 \qquad
 |\nabla\rho|^2=\frac4S.
\end{equation}
We also have
\[
 \partial_iS=\frac{2x_i}{A_i^2}-2K\partial_i\rho,
 \qquad
 \sum_{i=1}^n\frac{x_i\partial_i\rho}{A_i}=2,
 \qquad
 \sum_{i=1}^n\frac{x_i\partial_iS}{A_i}=-2K.
\]
{Thus, we obtain
\begin{align}
 \Delta\rho
 &=\frac{2H}{S}
 -\frac2S\sum_{i=1}^n\frac{x_i\partial_i\rho}{A_i^2}
 -\frac2{S^2}\sum_{i=1}^n\frac{x_i\partial_iS}{A_i}\nonumber\\
 &=\frac{2H}{S}-\frac{4K}{S^2}+\frac{4K}{S^2}\nonumber\\
 &=\frac{2H}{S}.\label{laplacian of rho}
\end{align}
For $1\leq i\leq n$, define
\begin{equation}
 I_i(r)\coloneqq\int_r^\infty
 \frac{\dd s}{(a_i^2+s)\sqrt{\prod_{j=1}^n(a_j^2+s)}},
 \qquad
 V_i(x)\coloneqq x_iI_i(\rho(x)).
 \label{definition of Ii}
\end{equation}
Direct computation shows that
\[
 I_i''(\rho)
 =-\left(\frac1{A_i}+\frac H2\right)I_i'(\rho).
\]
Combining \eqref{nabla of rho} and \eqref{laplacian of rho}, we obtain
\begin{align*}
 \Delta V_i
 &=x_i\left(I_i''|\nabla\rho|^2+I_i'\Delta\rho\right)
 +2I_i'\partial_i\rho\\
 &=\frac{x_i}S
 \left(4I_i''+2HI_i'+\frac4{A_i}I_i'\right)\\
 &=0.
\end{align*}
Thus $V_i$ is harmonic in $E$.}

{We next verify that $V_i\in\mathcal D^{1,2}(E)$.  Let 
\[a_{\min}=\min_{1\leq j\leq n}a_j , \qquad a_{\max}=\max_{1\leq j\leq n}a_j.
\]
Replacing every denominator $a_j^2+\rho$ in \eqref{confocal equation} by, respectively, $a_{\max}^2+\rho$ and $a_{\min}^2+\rho$, gives
\[
 |x|^2-a_{\max}^2\le\rho(x)\le |x|^2-a_{\min}^2.
\]
We can choose $R>0$ sufficiently large such that
\begin{equation}
   1< c|x|^2\leq \rho(x)\leq C|x|^2\qquad\text{for }|x|>R,
   \label{estimate of rho} 
\end{equation}
where $c$ and $C$ are positive constants independent of $x$. Since $|\nabla\rho|^2=4/S$, it follows that
\begin{equation}
S(x)=\sum_{i=1}^n\frac{x_i^2}{A_i^2}\geq c_1|x|^{-2}\qquad\text{and} \qquad |\nabla \rho(x) |\leq C_1|x|\qquad \text{for}\quad |x|\geq R.
\label{estimate of nabla rho}
\end{equation}
On the other hand, by the definition of $I_i$ in \eqref{definition of Ii}, for $r>1$, we have 
\begin{equation}
\label{estimate of Ii} 
     |I_i(r)|\leq C_2 r^{-\frac{n}{2}},\qquad |I'_i(r)|\leq C_2 r^{-\frac{n}{2}-1}
\end{equation}
Since $V_i=x_iI_i(\rho)$, combining \eqref{estimate of rho}, \eqref{estimate of nabla rho} and \eqref{estimate of Ii}, we have
\begin{equation}
\label{estimate of Vi} 
 |V_i(x)|\le C_3|x|^{1-n},
 \qquad
 |\nabla V_i(x)|\le C_3|x|^{-n}\qquad\text{for}\quad |x|\geq R.
\end{equation}
Therefore, for sufficiently large $R$,
\begin{align*}
 \int_{E\cap\{|x|>R\}}|\nabla V_i|^2\,\dd x
 &\le C\int_R^\infty r^{-n-1}\,\dd r<\infty,\\
 \int_{E\cap\{|x|>R\}}|V_i|^{2n/(n-2)}\,\dd x
 &\le C\int_R^\infty
 r^{n-1-\frac{2n(n-1)}{(n-2)}}\,\dd r<\infty.
\end{align*}
The function $V_i$ is smooth in a fixed neighborhood of the boundary;
hence $V_i\in\mathcal D^{1,2}(E)$.}

We now prove that $0<L_i<1$.  Let
\[
 G(s)=\prod_{j=1}^n(a_j^2+s)^{-1/2}.
\]
Then
\[
 G'(s)=-\frac12G(s)\sum_{i=1}^n\frac1{a_i^2+s}.
\]
Therefore,
\begin{align*}
 \sum_{i=1}^nL_i
 &=-A\int_0^\infty G'(s)\,\dd s
 =A\bigl(G(0)-G(\infty)\bigr)=1.
\end{align*}
{  Since each $L_i>0$, we obtain that $0<L_i<1$.}

{ Let $\nu_\Sigma$ denote the outward unit normal vector on $\partial \Sigma$. Then we have
\begin{equation*} 
 \nu_\Sigma=\frac{\left(x_1/a_1^2,\, x_2/a_2^2,\,\cdots,\, x_n/a_n^2\right)}{\sqrt{S_0}}\qquad\text{where}\quad S_0=\sum_{j=1}^n\frac{x_j^2}{a_j^4}.
\end{equation*}
It follows that
\[
 I_i(0)=\frac{2L_i}{A},\qquad
 I_i'(0)=-\frac1{a_i^2A},\qquad
 \partial_{\nu_\Sigma}\rho=\frac2{\sqrt{S_0}}.
\]
Consequently,
\begin{align*}
 \partial_{\nu_\Sigma}( V_i)
 &=I_i(0)(\nu_\Sigma)_i
 +x_iI_i'(0)\partial_{\nu_\Sigma}\rho\\
 &=\frac{2(L_i-1)}{A}(\nu_\Sigma)_i.
\end{align*}
Since the outward normal to $E$ on its inner boundary is
$\nu_E=-\nu_\Sigma$, the function
\[
 W_i=-\frac{A}{2(1-L_i)}V_i
\]
belongs to $\mathcal D^{1,2}(E)$ and satisfies
\begin{equation} 
 \partial_{\nu_E}W_i=(\nu_E)_i,
 \qquad
 \left.W_i\right|_{\partial\Sigma}
 =-\frac{L_i}{1-L_i}x_i.
 \label{boundary value of Wi} 
\end{equation}
By  the uniqueness of the solution to the problem \eqref{eq:exterior-problem}, we obtain that $W_i$ is the solution corresponding to $q=e_i$.}

{ Next, we compute the energy $T$. Choose $R>0$ sufficiently large such that the ball $B_R$ contains $\Sigma$. We obtain that
\begin{align*}
 \int_{E\cap B_R}\nabla W_i\cdot\nabla W_j\,\dd x
 ={}&\int_{\partial\Sigma}W_i\partial_{\nu_E}W_j\,\dd S
 +\int_{\partial B_R}W_i\partial_rW_j\,\dd S.
\end{align*}
By \eqref{estimate of Vi}, the second integral on the right-hand side above is $O(R^{-n})$ and hence
tends to zero as $R\to+\infty$.  On the other hand, the divergence
theorem gives
\[
 \int_{\partial\Sigma}x_i(\nu_\Sigma)_j\,\dd S
 =\int_\Sigma\partial_jx_i\,\dd x
 =\delta_{ij}|\Sigma|.
\]
Combining $\nu_E=-\nu_\Sigma$ and \eqref{boundary value of Wi}  , we find
\begin{align*}
 T(e_i,e_j)
 &=\lim_{R\to+\infty} \int_{E\cap B_R}\nabla W_i\cdot\nabla W_j\,\dd x=\int_{\partial\Sigma}W_i(\nu_E)_j\,\dd S
 =\delta_{ij}|\Sigma|\frac{L_i}{1-L_i}.
\end{align*}}

{ Finally, if  $a_n=\tau$ and $a_1=a_2=\cdots=a_{n-1}=1$, then we obtain directly from the definition of $L_i$ that
\[
 L(\tau)=L_1=L_2=\cdots=L_{n-1}  =\frac\tau2\int_0^\infty
 \frac{\dd s}{(1+s)^{(n+1)/2}(\tau^2+s)^{1/2}}.
\]
Moreover, for $s>0$ and $0<\tau<1$,
\[
 \frac{1}{(1+s)^{(n+1)/2}(\tau^2+s)^{1/2}}
 \leq\frac{1}{(1+s)^{(n+1)/2}s^{1/2}}.
\]
The right-hand side is integrable on $(0,\infty)$ and independent of $\tau$. Consequently,
\[
 0<L(\tau)\leq C_n\tau\longrightarrow0\qquad(\tau\to0^+),
\]
which proves the asserted uniform $O(\tau)$ estimate.}
\end{proof}

\subsection{The first positive eigenvalue of a large spherical cap}

\begin{lemma}
\label{lem:highdim-large-cap}
Let $n\ge3$, and let $D_R\subset\mathbb S^n$ be the spherical cap
centered at the north pole with radius $R\in(0,\pi)$.  Its first
positive Neumann eigenvalue $\lambda(R)=\mu_2(D_R)$ has multiplicity
$n$, and its eigenspace is spanned by
$f_R(r)\theta_i$, $i=1,\dots,n$, where
$\theta\in\mathbb S^{n-1}$.  Moreover, $f_R>0$ on $(0,R]$ and may be
normalized by $f_R'(0)=1$.  As $R\uparrow\pi$,
\begin{equation}\label{eq:highdim-cap-limit}
 \lambda(R)\longrightarrow n,\qquad
 f_R\longrightarrow\sin r
 \quad\hbox{on every fixed $[0,L]\subset[0,\pi)$ in the $C^2$ sense}.
\end{equation}
Consequently, if $R$ is sufficiently close to $\pi$, there exists
$r_0$ close to $\pi/2$ such that
\begin{equation}\label{eq:highdim-cap-maximum}
 f_R'(r_0)=0,\qquad f_R''(r_0)<0,
 \qquad\lambda(R)>\frac{n-1}{\sin^2r_0}.
\end{equation}
Furthermore, $\lambda$ is a $C^1$ function on $(0,\pi)$, and
\begin{equation}\label{eq:highdim-cap-radius-derivative}
 \lambda'(R)=
 \frac{\sin^{n-1}R\,f_R(R)^2}
 {\displaystyle\int_0^R f_R(r)^2\sin^{n-1}r\,\dd r}
 \left(\frac{n-1}{\sin^2R}-\lambda(R)\right)>0
\end{equation}
for all $R$ sufficiently close to $\pi$.
\end{lemma}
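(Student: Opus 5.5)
Separation of variables in geodesic polar coordinates $(r,\theta)\in(0,R)\times\mathbb S^{n-1}$ reduces everything to a Sturm--Liouville problem. Neumann eigenfunctions on $D_R$ decompose into spherical harmonics of degree $\ell$, with radial part solving
\[
 -(\sin^{n-1}r\,f')'+\frac{\ell(\ell+n-2)}{\sin^2r}\sin^{n-1}r\,f=\lambda\sin^{n-1}r\,f,\qquad f'(R)=0,
\]
regular at $r=0$. For $\ell=0$ the first eigenvalue is $0$, and the second radial eigenvalue is compared with the first $\ell=1$ eigenvalue. For $\ell\ge2$ the potential is larger, so the corresponding eigenvalue exceeds the $\ell=1$ one.

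The comparison $\ell=0$ versus $\ell=1$ is the step requiring care. I would use the standard trick: if $f_0$ is the second radial eigenfunction, then $f_0'$ satisfies the $\ell=1$ equation with a modified potential, and the Neumann condition becomes a Dirichlet-type condition. Comparison then gives $\lambda_1^{(\ell=1)}<\lambda_2^{(\ell=0)}$. On $\mathbb S^n$ this is the usual Weinberger/Ashbaugh--Benguria-type argument. Its curvature term has a favorable sign, since the Ricci curvature is $n-1>0$: differentiating the $\ell=0$ equation, $g=f_0'$ solves
\[
 -(\sin^{n-1}g')'+\frac{n-1}{\sin^2 r}\sin^{n-1}g=(\lambda-(n-1))\sin^{n-1}g,
\]
with $g(0)=g(R)=0$. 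Hence $\lambda_2^{(0)}-(n-1)$ is a Dirichlet eigenvalue of the $\ell=1$ operator, which dominates the Neumann eigenvalue. This gives multiplicity $n$ with eigenfunctions $f_R(r)\theta_i$. The bound $f_R>0$ follows because $f_R$ is a first eigenfunction of the $\ell=1$ problem, and I normalize $f_R'(0)=1$ via the regular-singular behavior $f\sim r$.

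For the limit $R\uparrow\pi$, I would show that $\lambda(R)$ is bounded: use the test function $\sin r$, whose Neumann defect $\cos R\to-1$ contributes boundary terms weighted by $\sin^{n-1}R\to0$. The limit should be $\lambda(R)\to n=\mu_2(\mathbb S^n)$, using that a point has zero capacity for $n\ge2$. The ODE with initial data $f(0)=0$, $f'(0)=1$ depends continuously on $\lambda$ uniformly on $[0,L]$ (the regular singular point is handled by the Frobenius/Volterra integral form). So $f_R\to$ the solution with $\lambda=n$, which is $\sin r$, in $C^2[0,L]$. Since $\sin$ has a nondegenerate maximum at $\pi/2$, $C^2$ convergence yields $r_0$ near $\pi/2$ with $f_R'(r_0)=0$ and $f_R''(r_0)<0$. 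Evaluating the ODE at $r_0$ gives
\[
 -f_R''(r_0)+\frac{n-1}{\sin^2r_0}f_R(r_0)=\lambda f_R(r_0),
\]
and since $f_R(r_0)>0$ and $-f_R''(r_0)>0$, this forces $\lambda>(n-1)/\sin^2r_0$.

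Differentiability comes from simplicity of the eigenvalue in the $\ell=1$ problem together with analytic dependence after rescaling $r=Rs$. Hadamard's formula for a Neumann eigenvalue under domain variation, applied to radial dilation of the cap, gives
\[
 \lambda'(R)=\frac{\int_{\partial D_R}(|\nabla u|^2-\lambda u^2)}{\int u^2}.
\]
With $u=f_R\theta_1$ and $f_R'(R)=0$, the tangential gradient is $f_R^2|\nabla_{\mathbb S^{n-1}}\theta_1|^2/\sin^2R$. The angular integrals contribute factors $(n-1)c$ and $c$ in the numerator and denominator respectively, which produces exactly the stated formula. One can verify it directly by multiplying the ODE by $\partial_Rf_R$ and integrating by parts.

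Positivity for $R$ near $\pi$ is the main obstacle. It requires $\lambda(R)<(n-1)/\sin^2R$; the right side tends to $+\infty$ while $\lambda\to n$, so it holds once $R$ is close to $\pi$. The factor $f_R(R)^2$ must also be nonzero, which holds because $f_R>0$ on $(0,R]$.
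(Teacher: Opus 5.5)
Your overall architecture matches the paper's: separation of variables, the derivative trick for $\ell=0$, a test-function upper bound, Volterra continuity for $f_R\to\sin r$, evaluating the ODE at the maximum, and the radius-derivative formula. However, two steps fail as written.

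First, the displayed identity in the $\ell=0$ versus $\ell=1$ comparison is wrong. Differentiating $f_0''+(n-1)\cot r\,f_0'=-\lambda f_0$ gives, for $g=f_0'$,
$-(\sin^{n-1}r\,g')'+\frac{n-1}{\sin^2 r}\sin^{n-1}r\,g=\lambda\sin^{n-1}r\,g$,
with the \emph{same} eigenvalue $\lambda$. The Ricci shift by $n-1$ belongs to the Bochner formula for the rough Laplacian acting on $du$, not to the scalar separated operator. On the whole sphere, $f_0=\cos r$ with $\lambda=n$ gives $g=-\sin r$, the $\ell=1$ eigenfunction with eigenvalue $n$, not $1$. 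Your version would even force $\lambda_2^{(0)}(R)\ge 2n-1$, whereas the test function $\cos r$ minus its mean shows $\lambda_2^{(0)}(R)\to n$. With the correct computation you only get $\lambda_2^{(0)}(R)\ge\lambda_1^{D}(R)$, the lowest $\ell=1$ eigenvalue with a Dirichlet condition at $R$. To get multiplicity exactly $n$ you then need the \emph{strict} inequality $\lambda_1^{D}(R)>\lambda(R)$, which the spurious shift had hidden. The paper supplies it: if the two were equal, the Dirichlet ground state would also minimize the Neumann quotient. It would then satisfy $g'(R)=0$ as well as $g(R)=0$, and ODE uniqueness forces $g\equiv0$.

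Second, the lower bound $\liminf_{R\uparrow\pi}\lambda(R)\ge n$ is not proved. The claimed limit $\lambda(R)\to n$, the $C^2$ convergence $f_R\to\sin r$, and hence the existence of $r_0$ all depend on it; only the sign of $\lambda'$ needs just the upper bound. Zero capacity of a point gives density of functions vanishing near the south pole, which is the upper-bound direction you already have from $\sin r$. For a Neumann condition on a shrinking hole, capacity alone does not control $D_R$-eigenfunctions near the hole; one needs an extension property uniform in $R$. The paper builds a rescaled extension across $B_{2\varepsilon}(S)\setminus\overline{B_\varepsilon(S)}$ with $\varepsilon$-independent bounds, then uses compactness and lower semicontinuity. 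In your radial setting there is a short alternative: writing $g=\sin r\,h$, one has
\[
\frac{\int_0^R\bigl(g'^2+\frac{n-1}{\sin^2 r}\,g^2\bigr)\sin^{n-1}r\,dr}{\int_0^R g^2\sin^{n-1}r\,dr}
=n+\frac{\int_0^R\sin^{n+1}r\,h'^2\,dr+\sin^nR\,\cos R\,h(R)^2}{\int_0^R\sin^{n+1}r\,h^2\,dr}.
\]
Apply Cauchy--Schwarz to $\int_r^R h'$, using $\int_{\pi/2}^R\sin^{-n-1}s\,ds\approx(\pi-R)^{-n}/n$. This bounds $\sin^nR\,h(R)^2$ by $\theta\int_0^R\sin^{n+1}r\,h'^2\,dr+C(\pi-R)^n\int_0^R\sin^{n+1}r\,h^2\,dr$ with $\theta<1$, which gives $\lambda(R)\ge n-C(\pi-R)^n$. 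Your differentiability argument is a legitimate alternative: Kato perturbation of the rescaled simple $\ell=1$ problem plus Hadamard's formula. The paper instead uses the shooting function $f_\lambda'(R)$, the implicit function theorem and a Wronskian identity, which is more self-contained and yields the formula directly.
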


\begin{proof}
{\emph{Step 1: Identification of the first positive eigenspace.}}
After separation of the angular variables, the radial operator
corresponding to the $\ell$th spherical-harmonic frequency is
\[
 -g''-(n-1)\cot r\,g'
 +\frac{\ell(\ell+n-2)}{\sin^2r}g,
\]
with the regularity condition at the north pole and the Neumann
condition at $r=R$.  
{
For $\ell\geq1$, the radial quadratic forms have the common domain
\[
 \mathcal V_R=
 \left\{g\in H^1_{\mathrm{loc}}(0,R):
 \int_0^R\left(|g'|^2+|g|^2+\frac{|g|^2}{\sin^2r}\right)
             \sin^{n-1}r\,\mathrm{d}r<\infty\right\}.
\]
Their quadratic forms are
\[
 \mathcal Q_\ell(g)=\int_0^R
 \left(|g'|^2+\frac{\ell(\ell+n-2)}{\sin^2r}|g|^2\right)
 \sin^{n-1}r\,\mathrm{d}r.
\]
For $\ell\geq2$, put
$d_\ell=\ell(\ell+n-2)-(n-1)>0$. Then
\begin{align*}
 \mathcal Q_\ell(g)-\mathcal Q_1(g)
 &=d_\ell\int_0^R|g|^2\sin^{n-3}r\,\mathrm{d}r\\
 &\geq d_\ell\int_0^R|g|^2\sin^{n-1}r\,\mathrm{d}r.
\end{align*}
Taking infima, the lowest eigenvalue for frequency $\ell$ exceeds
that for frequency one by at least $d_\ell$.
}

If $g$ is a nonconstant radial Neumann eigenfunction for $\ell=0$,
differentiating its equation shows that $g'$ satisfies the equation for
$\ell=1$, together with $g'(0)=g'(R)=0$.  Its eigenvalue is therefore
not smaller than the lowest $\ell=1$ eigenvalue with a Dirichlet
condition at the outer endpoint.  The latter is strictly larger than
the lowest eigenvalue with a Neumann condition there.  This strictness
follows from variational inclusion and uniqueness for the
one-dimensional equation: if the two lowest eigenvalues were equal,
the Dirichlet ground state would also satisfy the outer Neumann
condition, and hence would have both zero value and zero derivative
there, forcing it to vanish identically.

It follows that the full first positive eigenspace is the product of the
simple radial ground state for $\ell=1$ and the $n$-dimensional space of
first spherical harmonics.  The radial ground state may be chosen
positive and satisfies
\begin{equation}\label{eq:highdim-cap-ode}
 -(\sin^{n-1}r\,f_R')'
 +\frac{n-1}{\sin^2r}\sin^{n-1}r\,f_R
 =\lambda(R)\sin^{n-1}r\,f_R,
 \qquad f_R(0)=0,\quad f_R'(R)=0.
\end{equation}
The regular solution at the north pole satisfies
$f_R(r)=ar+O(r^3)$ with $a>0$, and we normalize it by taking $a=1$.

{\emph{Step 2: Spectral limit as the cap exhausts the sphere.}}
Write $\varepsilon=\pi-R$.  Then $D_R$ is the whole sphere with a small
cap of radius $\varepsilon$ about the south pole removed.  The
transverse coordinate $x_1$ has zero mean on $D_R$, and its Rayleigh
quotient converges to its value $n$ on the whole sphere.  Thus
$\limsup_{R\uparrow\pi}\lambda(R)\le n$.

For the reverse inequality, denote the south pole by $S$ and, in normal
coordinates centered there, set
$A_\varepsilon=B_{2\varepsilon}(S)\setminus
\overline{B_\varepsilon(S)}$.  Choose a bounded $H^1$ extension
operator on the fixed reference annulus
$B_2\setminus\overline{B_1}$.  For $u\in H^1(D_R)$, first subtract its
mean over $A_\varepsilon$, apply the rescaled extension, and then add
the mean back.  The Poincar\'e inequality and the uniform equivalence of
the metric in normal coordinates yield an extension
$\widetilde u\in H^1(\mathbb S^n)$ which equals $u$ on $D_R$ and
satisfies
\begin{align*}
 \int_{B_{2\varepsilon}(S)}|\nabla\widetilde u|^2\,\dd v
 &\le C\int_{A_\varepsilon}|\nabla u|^2\,\dd v,\\
 \int_{B_{2\varepsilon}(S)}|\widetilde u|^2\,\dd v
 &\le C\left(
 \int_{A_\varepsilon}|u|^2\,\dd v
 +\varepsilon^2\int_{A_\varepsilon}|\nabla u|^2\,\dd v
 \right),
\end{align*}
where $C$ is independent of $\varepsilon$.

Let $u_R$ be a first positive eigenfunction normalized by
$\int_{D_R}u_R=0$ and $\int_{D_R}u_R^2=1$.  The spectral upper bound
above implies that its extension $\widetilde u_R$ is bounded in
$H^1(\mathbb S^n)$.  The Sobolev inequality gives
\[
 \int_{B_\varepsilon(S)}|\widetilde u_R|^2\,\dd v
 \le |B_\varepsilon(S)|^{2/n}
 \|\widetilde u_R\|_{L^{2n/(n-2)}(\mathbb S^n)}^2
 \le C\varepsilon^2.
\]
{Choose $R_j\uparrow\pi$ such that
$\lambda(R_j)\to\underline\lambda:=\liminf_{R\uparrow\pi}\lambda(R)$.
After passing to a subsequence,
$\widetilde u_{R_j}\rightharpoonup u$ in $H^1(\mathbb S^n)$ and
$\widetilde u_{R_j}\to u$ in $L^2(\mathbb S^n)$.}
The mass estimate inside the hole ensures that
$\int_{\mathbb S^n}u=0$ and $\int_{\mathbb S^n}u^2=1$.
{For each fixed $\rho\in(0,\pi)$, the extension agrees with
$u_{R_j}$ on $\mathbb S^n\setminus\overline{B_\rho(S)}$ for all large $j$.
Weak lower semicontinuity on this fixed domain gives
\[
 \int_{\mathbb S^n\setminus\overline{B_\rho(S)}}|\nabla u|^2\,\dd v
 \le\liminf_{j\to\infty}
 \int_{\mathbb S^n\setminus\overline{B_\rho(S)}}|\nabla u_{R_j}|^2\,\dd v
 \le\underline\lambda.
\]
Letting $\rho\downarrow0$ and using the zero mean and unit mass of $u$,
we obtain
\[
 n\le\int_{\mathbb S^n}|\nabla u|^2\,\dd v
 \le\underline\lambda.
\]
Together with the upper bound, this proves $\lambda(R)\to n$.}

{\emph{Step 3: Convergence of the radial factor on compact intervals.}}
Set $h_R=f_R/\sin r$ and extend it continuously to the origin by
$h_R(0)=1$.  Equation~\eqref{eq:highdim-cap-ode} is equivalent to
\begin{equation}\label{eq:highdim-cap-volterra}
 h_R(r)=1+(n-\lambda(R))
 \int_0^r\frac1{\sin^{n+1}s}
 \int_0^s\sin^{n+1}t\,h_R(t)\,\dd t\,\dd s.
\end{equation}
Fix $L<\pi$, and denote the integral operator in
\eqref{eq:highdim-cap-volterra} by $\mathcal K_L$.  Near the origin,
the inner integral divided by $\sin^{n+1}s$ is $O(s)$, so
$\mathcal K_L$ is bounded on $C([0,L])$.  Let
$\delta_R=n-\lambda(R)$.  When $R$ is sufficiently close to $\pi$,
$|\delta_R|\|\mathcal K_L\|<1/2$, and the Neumann series gives
\[
 \|h_R-1\|_{C([0,L])}\le C_L|\delta_R|.
\]
Differentiating~\eqref{eq:highdim-cap-volterra} gives the exact formula
\[
 h_R'(r)=\delta_R\sin^{-n-1}r
 \int_0^r\sin^{n+1}t\,h_R(t)\,\dd t.
\]
Fix a small $r_*>0$.  Since $\sin r\asymp r$ on $[0,r_*]$, the
preceding identity gives
\[
 |h_R'(r)|\le C_L|\delta_R|r
 \quad(0\le r\le r_*),
 \qquad
 |h_R'(r)|\le C_L|\delta_R|
 \quad(r_*\le r\le L).
\]
In particular, $\cot r\,h_R'(r)=O(|\delta_R|)$ uniformly near the
origin.  Using
\[
 h_R''=\delta_R h_R-(n+1)\cot r\,h_R'
\]
we obtain $\|h_R''\|_{C([0,L])}\le C_L|\delta_R|$.  Hence
$h_R\to1$ in $C^2([0,L])$, and consequently
$f_R=\sin r\,h_R\to\sin r$ in $C^2([0,L])$.
{Fix $0<\delta<\pi/4$ and choose $\pi/2+\delta<L<\pi$.
For $R$ sufficiently close to $\pi$, the $C^2$ convergence gives
\[
 f_R'(\pi/2-\delta)>0,\qquad f_R'(\pi/2+\delta)<0,
 \qquad f_R''\le-\tfrac12\cos\delta<0
 \quad\hbox{on }[\pi/2-\delta,\pi/2+\delta].
\]
Thus $f_R$ has a unique critical point $r_0$ in this interval,
and it is a strict local maximum. Moreover,
$|\cos r_0|\le\|f_R'-\cos r\|_{C([0,L])}$, so $r_0\to\pi/2$.
At $r_0$, equation~\eqref{eq:highdim-cap-ode} gives
\[
 \lambda(R)-\frac{n-1}{\sin^2r_0}
 =-\frac{f_R''(r_0)}{f_R(r_0)}>0,
\]
which proves~\eqref{eq:highdim-cap-maximum}.}

{\emph{Step 4: Derivative with respect to the radius.}}
Let $f_\lambda$ be the radial solution that is regular at the north pole
and normalized by $f_\lambda'(0)=1$, and set
$v=\partial_\lambda f_\lambda$.  
{
Fix $R_*\in(0,\pi)$ and choose $L\in(R_*,\pi)$.
Writing $f_\lambda(r)=\sin r\,h_\lambda(r)$, the integral equation
corresponding to \eqref{eq:highdim-cap-volterra} is
\[
 h_\lambda=1+(n-\lambda)\mathcal K_Lh_\lambda,
 \qquad
 (\mathcal K_Lh)(r)=\int_0^r k(r,t)h(t)\,\mathrm{d}t,
\]
where
\[
 k(r,t)=\sin^{n+1}t\int_t^r\sin^{-n-1}s\,\mathrm{d}s,
 \qquad 0<t\leq r\leq L.
\]
Since $\sin s$ is comparable to $s$ on $[0,L]$,
$0\leq k(r,t)\leq C_Lt$. The kernel therefore extends continuously to
$t=0$ with value zero. Let $M_L$ be its maximum on the closed triangle.
Integration over the ordered simplex gives
\[
 \|\mathcal K_L^j\|_{C([0,L])\to C([0,L])}
 \leq\frac{(M_LL)^j}{j!},\qquad j\geq0.
\]
Hence the series
\[
 h_\lambda=\sum_{j=0}^{\infty}(n-\lambda)^j\mathcal K_L^j1
\]
and its derivative with respect to $\lambda$ converge uniformly in
$C([0,L])$ on bounded parameter intervals.
Differentiation in $r$ gives
\[
 h_\lambda'(r)=(n-\lambda)\sin^{-n-1}r
          \int_0^r\sin^{n+1}t\,h_\lambda(t)\,\mathrm{d}t.
\]
The identity and its parameter derivative show that
$F(\lambda,R)=f_\lambda'(R)$ is $C^1$ near
$(\lambda(R_*),R_*)$. As $r\downarrow0$,
\[
 h_\lambda(r)=1+O(r^2),\qquad
 \partial_\lambda h_\lambda(r)=O(r^2),\qquad
 \partial_r\partial_\lambda h_\lambda(r)=O(r).
\]
In particular, $v(r)=O(r^3)$ and $v'(r)=O(r^2)$.
}
At $\lambda=\lambda(R)$, the Wronskian identity applied to the equation
and its $\lambda$ derivative gives
\[
 -\sin^{n-1}R\,f_R(R)\,v'(R)
 =\int_0^R f_R(r)^2\sin^{n-1}r\,\dd r>0.
\]
The boundary term at the origin vanishes because the regular solution is
$O(r)$ there, whereas $v=O(r^3)$.  In particular, $v'(R)\ne0$.
Set $F(\lambda,R)=f_\lambda'(R)$.  The identity above shows that
$\partial_\lambda F(\lambda(R),R)=v'(R)\ne0$.  Thus the implicit
function theorem gives a unique local $C^1$ branch of roots near each
$R\in(0,\pi)$.  The radial ground state for $\ell=1$ is simple and
strictly positive on $(0,R]$.  After shrinking the parameter
neighborhood, the regular solution associated with this local root
remains positive and therefore remains the $\ell=1$ ground state rather
than a higher radial branch.  Uniqueness identifies the root branches
on overlapping neighborhoods, so they combine to form a $C^1$ function
$\lambda(R)$ on $(0,\pi)$.

Differentiating $F(\lambda(R),R)=0$ gives
$\lambda'(R)=-f_R''(R)/v'(R)$.  The identity in
\eqref{eq:highdim-cap-radius-derivative} then follows from
$f_R'(R)=0$ and the radial equation.  Finally,
$\lambda(R)\to n$, whereas $(n-1)/\sin^2R\to+\infty$.  Hence this
derivative is strictly positive when $R$ is sufficiently close to
$\pi$.
\end{proof}

\subsection{Construction of the counterexample}

\begin{proof}[Proof of Theorem~\ref{thm:highdim-counterexample}]
By Lemma~\ref{lem:highdim-large-cap}, fix $R$ sufficiently close to
$\pi$ and $r_0\in(0,R)$ such that the positive radial factor
$f=f_R$ satisfies
\begin{equation}\label{eq:highdim-cap-interior-strict-maximum}
 f'(r_0)=0,\qquad f''(r_0)<0,\qquad
 \lambda(R)>\frac{n-1}{\sin^2r_0},\qquad \lambda'(R)>0.
\end{equation}
Let $\kappa>0$ be the normalization constant such that
$u_i(r,\theta)=\kappa f(r)\theta_i$, $i=1,\ldots,n$, form an
$L^2$-orthonormal basis of the first positive eigenspace of the spherical cap $D_R$.

At the latitude $r_0$, choose the $2n$ points
$p_{\pm i}=(r_0,\pm e_i)$ for $1\leq i\leq n$.  In each tangent space, take
$\partial_r$ as the short axis of the oblate ellipsoid and the remaining
$n-1$ angular directions as its long axes.  In the corresponding
orthonormal coordinates
$y=(y_r,y_{\mathrm{ang}})\in\mathbb R\times\mathbb R^{n-1}$, set
\[
 \Sigma_\tau=\left\{y:\frac{y_r^2}{\tau^2}+|y_{\mathrm{ang}}|^2<1\right\},
\]
where the short semiaxis has length $\tau$, while all remaining semiaxes have
length $1$.  First fix $\tau>0$ sufficiently small that
\begin{equation}\label{eq:highdim-oblate-positive-margin}
 \lambda(R)>\frac{n-1}{(1-L(\tau))\sin^2r_0},
\end{equation}
where $L(\tau)$ is given by Lemma \ref{lem:highdim-oblate-polarization}. This is possible by the fact that
$L(\tau)\to0$ as $\tau\to0^+$.

{
Next, we define the perforated domain by
\[
 \Om_\varepsilon:=D_R\setminus\bigcup_{i,\pm}
 \overline{\exp_{p_{\pm i}}(\varepsilon\Sigma_\tau)}.
\]
 Here $R$ and $\tau$ are fixed and only $\varepsilon$ is allowed to tend to 0.  Thus the holes are pairwise
disjoint, remain away from the outer boundary, and preserve the original
spectral gap. More precisely,  at the centers of the holes, we have}
\begin{equation} 
\label{sums of function values} 
 \sum_{i,\pm}u_j(p_{\pm i})u_k(p_{\pm i})
 =2\kappa^2f(r_0)^2\delta_{jk}.
\end{equation}
{
Since $f'(r_0)=0$, all radial derivatives vanish. The identity
\[
 \left\langle\nabla_{\mathbb S^{n-1}}\theta_j,
 \nabla_{\mathbb S^{n-1}}\theta_k\right\rangle
 =\delta_{jk}-\theta_j\theta_k
\]
and the angular metric factor $\sin^{-2}r_0$ give
}
\begin{equation}
\label{sums of gradients} 
 \sum_{i,\pm}\langle\nabla u_j(p_{\pm i}),\nabla u_k(p_{\pm i})\rangle
 =\frac{2(n-1)\kappa^2f(r_0)^2}{\sin^2r_0}\delta_{jk}.
\end{equation}
Write $q_j^{\pm i}=\nabla u_j(p_{\pm i})$.  Since $f'(r_0)=0$, each
$q_j^{\pm i}$ has zero radial component and therefore belongs to the
long-axis subspace of the oblate ellipsoid.
Lemma~\ref{lem:highdim-oblate-polarization} and bilinear polarization
give
\begin{equation}
\label{polarization energies} 
 T_{\pm i}(q_j^{\pm i},q_k^{\pm i})
 =|\Sigma_\tau|\frac{L(\tau)}{1-L(\tau)}
 \langle q_j^{\pm i},q_k^{\pm i}\rangle.
\end{equation}
Substituting \eqref{sums of function values}, \eqref{sums of gradients} and \eqref{polarization energies} into
\eqref{eq:highdim-manifold-hole-matrix}, we obtain
\begin{align*}
 (\mathbf B)_{jk}
 ={}&2|\Sigma_\tau|\kappa^2f(r_0)^2
 \left[
 \lambda(R)-\frac{n-1}{\sin^2r_0}
 -\frac{(n-1)L(\tau)}
 {(1-L(\tau))\sin^2r_0}
 \right]\delta_{jk}\\
 ={}&2|\Sigma_\tau|\kappa^2f(r_0)^2
 \left[
 \lambda(R)-\frac{n-1}
 {(1-L(\tau))\sin^2r_0}
 \right]\delta_{jk}.
\end{align*}
Thus $\mathbf B=cI_n$, where
\begin{equation}\label{eq:highdim-oblate-positive-cluster}
 c=2|\Sigma_\tau|\kappa^2f(r_0)^2
 \left[\lambda(R)-\frac{n-1}{(1-L(\tau))\sin^2r_0}\right]>0.
\end{equation}
Consequently, by Lemma \ref{lem:highdim-manifold-small-holes}, we obtain
\[
 \mu_j(\Om_\varepsilon)=\lambda(R)+c\varepsilon^n+o(\varepsilon^n),
 \qquad j=2,\,\cdots,\,n+1.
\]
{
Since $c>0$, we have $\mu_j\left(\Omega_\varepsilon\right)> \lambda(R)$ for $j=2,\,\cdots,\,n+1$ and sufficiently small $\varepsilon$.
}

Note that
\[
 |D_R|-|\Om_\varepsilon|
 =2n|\Sigma_\tau|\varepsilon^n+o(\varepsilon^n)>0.
\]
Define the spherical cap volume function by
\[
 V(s)=|\mathbb S^{n-1}|\int_0^s\sin^{n-1}t\,\dd t.
\]
Let $B_\varepsilon$ be the spherical cap having the same volume as $\Omega_\varepsilon$ with radius $R_\varepsilon$.  Since $V(R_\varepsilon)=|\Om_\varepsilon|$ and $V'(R)=|\mathbb S^{n-1}|\sin^{n-1}R>0$, the Taylor expansion of {$V$ at $R$} gives
\[
 R-R_\varepsilon
 =\frac{2n|\Sigma_\tau|}
 {|\mathbb S^{n-1}|\sin^{n-1}R}\varepsilon^n
 +o(\varepsilon^n)>0.
\]
In particular, $R_\varepsilon<R$ and $R_\varepsilon\to R$.
Since $\lambda'(R)>0$ and $\lambda'$ is continuous, $\lambda'>0$ in a
neighborhood of $R$.  Therefore, for all sufficiently small
$\varepsilon$,
\[
 \mu_j(\Om_\varepsilon)\ge\mu_2(\Om_\varepsilon)>\lambda(R)>
 \lambda(R_\varepsilon)=\mu_2(B_\varepsilon),\qquad j=2,\ldots,n+1.
\]
It follows that
\[
 \sum_{j=2}^{n+1}\frac1{\mu_j(\Om_\varepsilon)}
 <\frac n{\mu_2(B_\varepsilon)}.
\]
Geometrically, the closure of each hole is diffeomorphic to a closed
$n$-ball and lies in an interior coordinate neighborhood disjoint from
all the others.  Hence $\Om_\varepsilon$ is connected and has
$2n+1$ boundary components.  We verify the simple connectedness of $\Omega_\varepsilon$ by removing the balls from $D_R$ successively.   { At one step, let $D^+$ be the domain before removal and let $K\Subset D^+$ be the closed ball to be removed. Let $D^-=D^+\setminus K$. Choose a sufficiently small open collar $C$ of $\partial K$ in $D^+$, and define 
\begin{equation*}
    U=D^-,\qquad V=\operatorname{int}(K)\cup C.
\end{equation*}
Then $U$ and $V$ are open, path connected subsets of $D^+$, and $D^+=U\cup V$. The collar $C$ can be chosen so that $V$ deformation retracts onto $K$, whereas $U\cap V=C\setminus K$ deformation retracts onto $\partial K\cong \mathbb{S}^{n-1}$. Thus
\begin{equation*}
\pi_1(V)=0,\qquad\pi_1(U\cap V)=\pi_1(\mathbb{S} ^{n-1})=0,
\end{equation*}
where the second equality uses \(n\geq3\). Applying the Seifert-van Kampen theorem to this open cover gives
\begin{equation*}
\pi_1(D^+)\cong\pi_1(U)=\pi_1(D^-).
\end{equation*}
Repeating the argument for all $2n$ holes yields
\begin{equation*} 
\pi_1(\Omega_\varepsilon)\cong\pi_1(D_R)=0,
\end{equation*}
since $D_R$ is a geodesic ball with $R<\pi$ and is therefore diffeomorphic to an open $n$-ball.
  { Finally, since $R>\pi/2$, we have $|D_R|>|\mathbb S^n|/2$.
The convergence $|\Om_\varepsilon|\to|D_R|$ implies
$|\Om_\varepsilon|>|\mathbb S^n|/2$ for all sufficiently small
$\varepsilon$.} Fix such an $\varepsilon$ and set
$\Om=\Om_\varepsilon$.  This is the required counterexample.}
\end{proof}

{
 \noindent\textbf{Acknowledgement:}  M. Liu  is funded by NSFC (12601191) and Zhejiang Provincial Natural Science Foundation of China (LQN25A010007) and the Fundamental Research Funds for the Provincial Universities of Zhejiang (GK259909299001-029). W.
Zou is funded by National Key R\&D Program of China (Grant 2023YFA1010001) and NSFC (12171265).

\subsection*{Declarations}

\noindent\textbf{Availability of data and materials.} No datasets were generated or analysed during the current study.

\smallskip
\noindent\textbf{Conflict of interest.} The authors declare no conflict of interest.

\smallskip
\noindent\textbf{Use of artificial intelligence.} AI-assisted tools were used during the preparation of this manuscript to assist with language editing, presentation, and checks of the mathematical exposition. The authors take full responsibility for the content of the manuscript, including all mathematical arguments and conclusions.}

\endgroup


\begin{thebibliography}{99}
\small

\bibitem{BallZarnescu2017} {J.~M. Ball, A. Zarnescu}, Partial regularity and smooth topology-preserving approximations of rough domains, Calc. Var. Partial Differential Equations {\bf 56} (2017), no.~1, Paper No. 13, 32 pp.

\bibitem{Bandle1972} C. Bandle, Isoperimetric inequality for some eigenvalues of an inhomogeneous, free membrane, SIAM J. Appl. Math. {\bf 22} (1972), 142--147.

\bibitem{BucurLaugesenMartinetNahon2025}
D.~Bucur, R.~S. Laugesen, E.~Martinet, M.~Nahon, Spherical caps do not always maximize Neumann eigenvalues on the sphere, Geom. Funct. Anal. {\bf 35} (2025), no.~5, 1313--1345.

\bibitem{cifea1980} I. Chavel, E.~A. Feldman, Isoperimetric inequalities on curved surfaces, Adv. in Math. {\bf 37} (1980), no.~2, 83--98.

\begingroup
\bibitem{ChenYang2026}
D.~Chen, C.~Yang,
\newblock Sharp harmonic-mean inequalities for Neumann and Aharonov--Bohm spectra,
\newblock arXiv:2609.24488v1 (2026).
\par\endgroup

\bibitem{Chenais1975} D. Chenais, On the existence of a solution in a domain identification problem, J. Math. Anal. Appl. {\bf 52} (1975), no.~2, 189--219.

\bibitem{ColboisProvenzanoSavo2022} B. Colbois, L. Provenzano, A. Savo, Isoperimetric inequalities for the magnetic Neumann and Steklov problems with Aharonov-Bohm magnetic potential, J. Geom. Anal. {\bf 32} (2022), no.~11, Paper No. 285, 38 pp.

\bibitem{deb1995} E.~B. Davies, {\it Spectral theory and differential operators}, Cambridge Studies in Advanced Mathematics, 42, Cambridge Univ. Press, Cambridge, 1995.

\bibitem{DiFratta2016} G. Di~Fratta, The Newtonian potential and the demagnetizing factors of the general ellipsoid, Proc. A {\bf 472} (2016), no.~2190, 20160197, 7 pp.

\begingroup
\par\endgroup

\bibitem{FelliLiveraniOgnibene2026}
V.~Felli, L.~Liverani, and R.~Ognibene,
\newblock On the splitting of Neumann eigenvalues in perforated domains,
\newblock arXiv:2601.13129v1 (2026).

\bibitem{LangfordLaugesen2023} J.~J. Langford, R.~S. Laugesen, Maximizers beyond the hemisphere for the second Neumann eigenvalue, Math. Ann. {\bf 386} (2023), no.~3-4, 2255--2281.

\bibitem{LangfordLaugesenScaling2023} J.~J. Langford, R.~S. Laugesen, Scaling inequalities for spherical and hyperbolic eigenvalues, J. Spectr. Theory {\bf 13} (2023), no.~1, 263--296.

\bibitem{MichettiProvenzanoSavo2026}
M.~Michetti, L.~Provenzano, A.~Savo,
\newblock Isoperimetric inequalities and sharp upper bounds for
Aharonov--Bohm eigenvalues on surfaces,
\newblock arXiv:2604.11718v2 (2026).

\bibitem{pc} C. Pommerenke, {\it Boundary behaviour of conformal maps}, Grundlehren der mathematischen Wissenschaften, 299, Springer, Berlin, 1992.

\bibitem{ProvenzanoSavo2026}
L.~Provenzano and A.~Savo,
\newblock On the isoperimetric inequality for the first positive
Neumann eigenvalue on the sphere,
\newblock arXiv:2603.04236v4 (2026).

\end{thebibliography}
\end{document}